\def\bc{\begin{center}}
\def\ec{\end{center}}
\def\be{\begin{equation}}
\def\ee{\end{equation}}
\def\F{\mathbb F}
\def\N{\mathbb N}
\def\R{\mathbb R}
\def\A{\mathbb A}
\newtheorem{lem}{Lemma}[section]
\newtheorem{dfn}[lem]{Definition}
\newtheorem{pro}[lem]{Proposition}
\newtheorem{thm}[lem]{Theorem}
\newtheorem{cor}[lem]{Corollary}
\theoremstyle{remark}
\newtheorem{rem}{Remark}
\numberwithin{equation}{section}
\begin{document}
\title[
]
{{Diophantine approximation of the orbit of 1 in the dynamical system of beta expansions}}


\author[Bing Li, Tomas Persson, Baowei Wang and Jun Wu]{Bing Li$^{1,2}$, Tomas Persson$^3$, Baowei Wang$^{4}$ and Jun Wu$^{4}$}
\address{$^1$ Department of Mathematics, South China University
of Technology, Guangzhou 510640,
  P. R. China}
  \address{$^2$ Departmentof Mathematical sciences,
University of Oulu,
 P.O. Box 3000 FI-90014, Finland}
 \email{libing0826@gmail.com}
 \address{$^3$ Center for Mathematical Sciences, Lund University, Box
118, 22100 Lund, Sweden.}
\email{tomasp@maths.lth.se}
\address{$^4$ School of Mathematics and Statistics, Huazhong University of Science and Technology, 430074 Wuhan, P. R. China.}
\email{bwei\_wang@yahoo.com.cn}
\email{jun.wu@mail.hust.edu.cn}

\keywords {$\beta$-expansion, Diophantine approximation, Hausdorff dimension.}

\subjclass[2010]  {11K55, 28A80.}

\begin{abstract}
  We consider the distribution of the orbits of the number $1$ under
  the $\beta$-transformations $T_\beta$ as $\beta$ varies.  Mainly,
  the size of the set of $\beta>1$ for which a given point can be well
  approximated by the orbit of $1$ is measured by its Hausdorff
  dimension. That is, the dimension of the following set
  \[
  E\big(\{\ell_n\}_{n\ge 1}, x_0\big)=\Big\{\,\beta>1:
  |T^n_{\beta}1-x_0|<\beta^{-\ell_n}, \ {\text{for infinitely
      many}}\ n\in \N\,\Big\}
  \]
  is determined, where $x_0$ is a given point in $[0,1]$ and
  $\{\ell_n\}_{n\ge 1}$ is a sequence of integers tending to infinity
  as $n\to \infty$.  For the proof of this result, the notion of the
  recurrence time of a word in symbolic space is introduced to
  characterise the lengths and the distribution of cylinders (the set
  of $\beta$ with a common prefix in the expansion of 1) in the
  parameter space $\{\,\beta\in \R: \beta>1\,\}$.
\end{abstract}

\maketitle

\section{Introduction}

The study of Diophantine properties of the orbits in a dynamical
system has recently received much attention. This study contributes to
a better understanding of the distribution of the orbits in a
dynamical system. Let $(X, \mathcal{B}, \mu, T)$ be a
measure-preserving dynamical system 
with a
consistent metric $d$.
If $T$ is ergodic with respect to the measure $\mu$, then a
consequence of Birkhoff's ergodic theorem is the following
hitting property, namely, for any $x_0\in
X$ and $\mu$-almost all $x\in X$,
\begin{equation}\label{1.2}
  \liminf_{n\to\infty} d(T^n(x),x_0)=0.
\end{equation}
One can then ask, what are the quantitative properties of
the convergence speed in \eqref{1.2}? More precisely, for a given
sequence of balls $B(x_0,r_n)$ with center $x_0\in X$ and shrinking
radius $\{r_n\}$, what are the metric properties of the set
\[
F(x_0,\{r_n\}):=\Big\{x\in X: d(T^nx, x_0)< r_n\ {\text{for
    infinitely many}}\ n\in \mathbb{N}\Big\}
\]
in the sense of measure and in the sense of dimension?  Generally, let
$\{B_n\}_{n\ge 1}$ be a sequence of measurable sets with $\mu(B_n)$
decreasing to $0$ as $n\to\infty$. The problem concerning the metric
properties of the set
\begin{equation}\label{11}
  \Big\{\, x\in X: T^nx\in B_n\ {\text{for infinitely many}}\ n\in
  \mathbb{N} \,\Big\}
\end{equation}
is named as the dynamical Borel--Cantelli Lemma (see \cite{ChK}) or
shrinking target problem \cite{HV}.


In this paper, we consider a modified shrinking target problem. Let us
begin with an example to illustrate the motivation.
Let $R_\alpha \colon x \mapsto x+\alpha$ be a rotation map on the unit
circle.  Then the set studied in classical inhomogeneous Diophantine
approximation can be rewritten as
\begin{equation}\label{fff4}
  \Big\{\, \alpha\in \mathbb{Q}^c: |R_{\alpha}^n0-x_0|<r_n, \ {\text{for
      infinitely many}}\ n\in \mathbb{N} \,\Big\}.
\end{equation}
The size of the set in (\ref{fff4}) in the sense of Hausdorff measure
and Hausdorff dimension was studied by Bugeaud \cite{Bu1},
Levesley \cite{Lev}, Bugeaud and Chevallier \cite{Bu2} etc. Compared
with the shrinking target problem (\ref{11}), instead of considering
the Diophantine properties in {\em one} given system, the set (\ref{fff4})
concerns the properties of the orbit of some given point (the orbit of
0) in {\em a family} of dynamical systems. It is the set of parameters
$\alpha$ such that $R_\alpha$ share some common property.

Following this idea, in this paper, we consider the same setting as
(\ref{fff4}) in the dynamical systems $([0,1], T_{\beta})$ of
$\beta$-expansions with $\beta$ varying in the {\em parameter space}
$\{\, \beta\in \R : \beta>1 \,\}$.


It is well-known that $\beta$-expansions are typical examples of
one-dimensional expanding systems, whose information is reflected by
some critical point. In the case of $\beta$-expansion, this critical
point is the unit $1$. This is because the $\beta$-expansion of 1 (or
the orbit of 1 under $T_\beta$) can completely characterize all
admissible sequences in the $\beta$-shift space (see \cite{Pa}), the
lengths and the distribution of cylinders induced by $T_\beta$
\cite{FW}, etc.
Upon this, in this current work, we study the Diophantine properties
of $\{T^n_{\beta}1\}_{n\ge 1}$, the orbit of 1, as $\beta$ varies in
the parameter space $\{\, \beta\in \R: \beta>1\,\}$.

Blanchard \cite{Bl} gave a kind of classification of the parameters in the
space $\{\,\beta\in \R: \beta>1\,\}$ according to the distribution of
$\mathscr{O}_{\beta}:=\{T^n_{\beta}1\}_{n\ge 1}$: (i) ultimately zero;
(ii) ultimately non-zero periodic; (iii) $0$ is not an accumulation
point of $\mathscr{O}_\beta$ (exclude those $\beta$ in classes (i,ii)
); (iv) non-dense in $[0,1]$ (exclude $\beta$'s in classes
(i,ii,iii)); and (v) dense in $[0,1]$. It was shown by Schmeling
\cite{Schme} that the class (v) is of full Lebesgue measure (the
results in \cite{Schme} give more, that for almost all $\beta$, all
allowed words appear in the expansion of $1$ with regular
frequencies).  This dense property of $\mathscr{O}_{\beta}$ for almost
all $\beta$ gives us a type of hitting property, i.e., for any $x_0\in
[0,1]$,
\begin{equation}\label{1.1} \liminf_{n\to\infty}
  |T^n_{\beta}1-x_0|=0, \quad \text{for } \mathcal{L}{\text{-a.e.}}  \ \beta>1,
\end{equation}
where $|x-y|$ means the distance between $x,y\in \mathbb{R}$, and
$\mathscr{L}$ is the Lebesgue measure on $\mathbb{R}$. Similarly as
for (\ref{1.2}), we would like to know the speed of convergence in
(\ref{1.1}).

Fix a point $x_0\in [0,1]$ and a sequence of positive integers
$\{\ell_n\}_{n\ge 1}$. Consider the set of $\beta>1$ for which $x_0$
can be well approximated by the orbit of 1 under the
$\beta$-expansions with given shrinking speed, namely the
set
\begin{equation}\label{1.3}
  E\big(\{\ell_n\}_{n\ge 1}, x_0\big)=\Big\{\, \beta>1:
  |T^n_{\beta}1-x_0|<\beta^{-\ell_n}, \ {\text{for infinitely
      many}}\ n\in \mathbb{N} \,\Big\}.
\end{equation}
This can be viewed as a kind of shrinking target problem in the
parameter space.

When $x_0=0$ and $\ell_n=\gamma n (\gamma > 0)$, Persson and Schmeling
\cite{PerS} proved that
\[
\dim_\textsf{H}E(\{\gamma n\}_{n\ge 1}, 0)=\frac{1}{1+\gamma},
\]
where $\dim_\textsf{H}$ denotes the Hausdorff dimension. For a general
$x_0\in [0, 1]$ and a sequence $\{\ell_n\}$, we have the following.

\begin{thm}\label{maintheorem}
  Let $x_0\in [0,1]$ and let $\{\ell_n\}_{n\ge 1}$ be a sequence of
  positive integers such that $\ell_n\to\infty$ as $n\to\infty$. Then
  \[
  \dim_\textsf{H}E\big(\{\ell_n\}_{n\ge 1},
  x_0\big)=\frac{1}{1+\alpha}, \quad
  {\text{where}}\ \alpha=\liminf_{n\to\infty}\frac{\ell_n}{n}.
  \]
\end{thm}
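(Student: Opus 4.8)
The plan is to establish the two bounds $\dim_{\textsf H}E\le 1/(1+\alpha)$ and $\dim_{\textsf H}E\ge 1/(1+\alpha)$ separately, writing $A_n=\{\beta>1:|T_\beta^n1-x_0|<\beta^{-\ell_n}\}$ so that $E=\limsup_n A_n$, and working inside a thin parameter window $[\beta_0,\beta_0+\delta]\subset(1,\infty)$ (the Hausdorff dimension of $E$ being the supremum of the dimensions of its traces on such windows). Everything hinges on the geometry of the cylinders $I_n(w)=\{\beta:\varepsilon_i(\beta)=w_i,\ 1\le i\le n\}$ in the parameter space, which is precisely what the recurrence time is designed to control: on each $I_n(w)$ the map $\beta\mapsto T_\beta^n1$ is increasing and analytic with derivative comparable to $|I_n(w)|^{-1}$, so $T_\beta^n1$ sweeps out an image interval, and for the \emph{full} cylinders this image is all of $[0,1)$. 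Granting this, the portion of a cylinder on which $T_\beta^n1$ falls within $\beta^{-\ell_n}$ of $x_0$ is an interval of length $\asymp|I_n(w)|\,\beta^{-\ell_n}$, and this single estimate drives both bounds.

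For the upper bound I would cover $E$ by $\bigcup_{n\ge N}A_n$ and estimate the $s$-dimensional Hausdorff sum cylinder by cylinder. From $|A_n\cap I_n(w)|\lesssim|I_n(w)|\,\beta^{-\ell_n}$, the elementary inequality $\sum_w|I_n(w)|^s\le (\#\{w\})^{1-s}\big(\sum_w|I_n(w)|\big)^s$ for $0<s<1$, the total length bound $\sum_w|I_n(w)|\le\delta$, and the counting estimate $\#\{w\}\lesssim\beta_0^{\,n}$ for cylinders meeting the window (all furnished by the recurrence-time analysis), I obtain
\[
\sum_{w}|A_n\cap I_n(w)|^s\lesssim \beta_0^{\,n(1-s)-s\ell_n}=\beta_0^{\,n\left(1-s\left(1+\ell_n/n\right)\right)}.
\]
Because $\alpha=\liminf_n\ell_n/n$, for each $\varepsilon>0$ one has $\ell_n/n>\alpha-\varepsilon$ for all large $n$, so each summand is $\lesssim\beta_0^{\,n(1-s(1+\alpha-\varepsilon))}$; this is summable in $n$ once $s>1/(1+\alpha-\varepsilon)$, whence $\mathcal H^s(E)=0$ and, letting $\varepsilon\to0$, $\dim_{\textsf H}E\le 1/(1+\alpha)$.

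For the lower bound I would build a Cantor subset of $E$ and invoke the mass distribution principle. Fix $\varepsilon>0$, choose a very rapidly increasing sequence $n_1<n_2<\cdots$ along which $\ell_{n_k}/n_k\to\alpha$, and at each level retain only full cylinders of generation $n_k$ whose image contains $x_0$, keeping inside each of them the approximation region $\{\beta:|T_\beta^{n_k}1-x_0|<\beta^{-\ell_{n_k}}\}$ of length $\asymp\beta^{-(n_k+\ell_{n_k})}$. Inside one such region the number of full generation-$n_{k+1}$ sub-cylinders is $\asymp\beta^{\,n_{k+1}-n_k-\ell_{n_k}}$, and spreading mass uniformly over the surviving cylinders produces a measure $\mu$ for which a generation-$n_k$ piece satisfies $\mu\asymp\beta^{-(n_k-\sum_{j=1}^{k-1}\ell_{n_j})}$ with diameter $\asymp\beta^{-(n_k+\ell_{n_k})}$; its local exponent is therefore
\[
\frac{\log\mu}{\log(\mathrm{diam})}=\frac{n_k-\sum_{j=1}^{k-1}\ell_{n_j}}{n_k+\ell_{n_k}}\longrightarrow\frac{1}{1+\alpha},
\]
where the limit uses that $n_k$ is taken so large that $\sum_{j<k}\ell_{n_j}\ll n_k$ while $\ell_{n_k}/n_k\to\alpha$. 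A routine estimate of $\mu\big(B(\beta,r)\big)$ at the intermediate scales $r$ — the one point where the non-uniform cylinder lengths must be handled with care, via the recurrence-time bounds on the gaps between neighbouring cylinders — then gives $\dim_{\textsf H}E\ge 1/(1+\alpha)-O(\varepsilon)$, and $\varepsilon\to0$ completes the proof.

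I expect the principal difficulty to be the cylinder geometry underlying the first paragraph. Unlike a single expanding map, the admissibility constraints imposed by the expansion of $1$ couple the generations, so the lengths $|I_n(w)|$ are genuinely non-uniform and not every cylinder is full. Proving through the recurrence time that $|I_n(w)|\asymp\beta^{-n}$ up to controlled corrections, that $\#\{w\}\lesssim\beta^n$, and above all that at every level there are enough full cylinders whose $T_\beta^n1$-image contains the prescribed $x_0$ — so that the construction can be iterated for an arbitrary target, not merely $x_0=0$ — is where the genuine work lies; the two dimension computations above are then comparatively mechanical.
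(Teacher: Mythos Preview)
Your overall strategy matches the paper's, but there is one technical slip in the upper bound and a genuine gap in the lower bound.

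\textbf{Upper bound.} The derivative of $\beta\mapsto T_\beta^n1$ on a parameter cylinder $I_n^P(w)$ is \emph{not} comparable to $|I_n^P(w)|^{-1}$. A direct computation gives $\beta^{\,n-1}\le f'(\beta)\le n\beta^{\,n-1}$ uniformly, whereas $|I_n^P(w)|$ can be far smaller than $\beta^{-n}$ when $w$ is not full; for such cylinders your estimate $|A_n\cap I_n^P(w)|\lesssim |I_n^P(w)|\,\beta^{-\ell_n}$ is simply false. The paper bypasses this by using the uniform bound $|A_n\cap I_n^P(w)|\le 2\beta_0^{-(n+\ell_n-1)}$ (coming from the derivative lower bound $\beta^{\,n-1}$) together with the count $\#\{w\}\lesssim\beta_1^{\,n}$; this yields your displayed sum directly, without the H\"older step and without any information on individual cylinder lengths.

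\textbf{Lower bound.} Your sketch names the difficulty but does not supply the missing ideas; two steps go beyond the recurrence-time length estimates. First, to produce $\asymp\beta^{\,n_k-m_{k-1}}$ words $w$ of length $n_k$ whose image $\{T_\beta^{\,n_k}1:\beta\in I_{n_k}^P(w)\}$ actually contains $x_0$, the paper builds $w$ from blocks drawn from a fixed subshift $S_{\beta_2}$ with $\beta_0<\beta_2<\beta_1$, padded so as to lie strictly below $\varepsilon(1,\beta_1)$; one then shows the image contains $(x_0+1)/2$, using $x_0<1$. Second, these $w$ are \emph{not} themselves full. Instead, the target ball $B\big(x_0,4(n_k+\ell_{n_k})\beta_0^{-\ell_{n_k}}\big)$ is covered by at least $n_k+\ell_{n_k}$ consecutive order-$(n_k+\ell_{n_k})$ sub-cylinders, and a separate combinatorial lemma (among any $n$ consecutive self-admissible words of length $n$, at least one has full recurrence time) is invoked to select a regular one. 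Your phrase ``retain only full cylinders whose image contains $x_0$'' conflates these two distinct mechanisms. Finally, the case $x_0=1$ requires an entirely different construction: the image interval is always of the form $[0,a)$ with $a\le1$ and so never contains the target, and one instead uses long periodic prefixes to force $T_\beta^{\,n}1$ and $1$ to share many initial digits. Your outline does not distinguish this case and, as written, would not produce a Cantor subset for it.
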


In other words, the set in (\ref{1.3}) concerns points in the
parameter space $\{\,\beta>1: \beta\in \R\,\}$ for which the orbit
$\{\,T^n_{\beta}1: n\ge 1\,\}$ is close to  the same
magnitude $x(\beta)=x_0$ for infinitely many moments in time. What can
be said if the magnitude $x(\beta)$ is also allowed to vary
continuously with $\beta>1$?  Let $x=x(\beta)$ be a function on $(1,
+\infty)$, taking values on $[0, 1]$. The setting \eqref{1.3} changes
to
\begin{equation} \label{eq:Etilde}
  \widetilde{E}\big(\{\ell_n\}_{n\ge 1}, x\big)=\Big\{\,\beta>1:
  |T^n_{\beta}1-x(\beta)|<\beta^{-\ell_n}, \ {\text{for infinitely
      many}}\ n\in \mathbb{N}\,\Big\}.
\end{equation}
As will become apparent, the proof of Theorem~\ref{maintheorem} also
works for this general case $x=x(\beta)$ after some minor adjustments,
and we can therefore state the following theorem.

\begin{thm}\label{t2}
  Let $x=x(\beta): (1, +\infty)\to [0, 1]$ be a Lipschtiz continuous
  function and $\{\ell_n\}_{n\ge 1}$ be a sequence of positive
  integers such that $\ell_n\to\infty$ as $n\to\infty$. Then
  \[
  \dim_\textsf{H}\widetilde{E}\big(\{\ell_n\}_{n\ge 1},
  x\big)=\frac{1}{1+\alpha}, \quad
  {\text{where}}\ \alpha=\liminf_{n\to\infty}\frac{\ell_n}{n}.
  \]
\end{thm}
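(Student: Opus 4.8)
The plan is to re-run the proof of Theorem~\ref{maintheorem} verbatim and to isolate the single place where the fixed centre $x_0$ intervenes, replacing it there by the moving target $x(\beta)$. The centre $x_0$ enters only through the description, for each admissible word $w=(w_1,\dots,w_n)$, of the portion of the cylinder $I_n(w)$ (the set of $\beta>1$ whose expansion of $1$ begins with $w$) on which the approximation condition holds. On $I_n(w)$ one has the explicit monic polynomial
\[
T^n_\beta 1=\beta^n-w_1\beta^{n-1}-\cdots-w_n,
\]
so the admissible part of the cylinder is the preimage, under $\beta\mapsto T^n_\beta 1$, of the shrinking ball of radius $\beta^{-\ell_n}$ centred at $x_0$. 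To pass from $E$ to $\widetilde E$ it therefore suffices to show that moving the centre from the constant $x_0$ to $x(\beta)$ changes the length of this preimage by at most a bounded factor, uniformly in $w$ and for all large $n$. Once this comparison is in place, the dimension computation — an economical cover for the upper bound, a Cantor construction together with the mass distribution principle for the lower bound — is unaffected and yields the same value $1/(1+\alpha)$.

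The key estimate is a lower bound on a derivative. Differentiating the recursion $T^n_\beta 1=\beta\,T^{n-1}_\beta 1-w_n$ and iterating gives, with the convention $T^0_\beta 1=1$,
\[
\frac{d}{d\beta}\bigl(T^n_\beta 1\bigr)=\sum_{k=1}^{n}\beta^{\,n-k}\,T^{k-1}_\beta 1\ \ge\ \beta^{\,n-1}>0,
\]
since every summand is nonnegative. Now set $g_n(\beta)=T^n_\beta 1-x(\beta)$. As $x$ is $L$-Lipschitz it is absolutely continuous with $|x'(\beta)|\le L$ almost everywhere, so $g_n'(\beta)\ge \beta^{\,n-1}-L$ wherever $x'$ exists. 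On any region $\beta\ge\beta_0>1$ this is positive and comparable to $\beta^{\,n-1}$ as soon as $n$ is large; hence $g_n$ is strictly increasing on $I_n(w)$ with $g_n'$ of the \emph{same} order of magnitude as $\tfrac{d}{d\beta}T^n_\beta 1$. Consequently the admissible set $\{\beta\in I_n(w):|g_n(\beta)|<\beta^{-\ell_n}\}$ is again a single subinterval, and its length differs from the constant-target length $\asymp \beta^{-\ell_n}\big/\tfrac{d}{d\beta}T^n_\beta 1$ only by a multiplicative constant depending on $L$. This is precisely the input the original argument consumes.

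With this comparison both halves of the proof transfer directly. For the upper bound, the natural cover of $\widetilde E$ by the admissible subintervals at each level $n$ has, up to constants, the same total $s$-dimensional content as the cover used for $E(\{\ell_n\},x_0)$, so the same sum converges for every $s>1/(1+\alpha)$. For the lower bound, one retains at each step the identical family of full sub-cylinders carrying an admissible subinterval and builds the same Cantor set; the gap and separation estimates controlling the local dimension of the Frostman measure depend only on the sizes of these subintervals, which by the previous paragraph are unchanged up to constants, so the dimension exponent is still $1/(1+\alpha)$.

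The only genuine obstacle is making the derivative comparison uniform. One must discard the finitely many small $n$ for which $L$ could compete with $\tfrac{d}{d\beta}T^n_\beta 1$, and one must secure the lower bound $\tfrac{d}{d\beta}T^n_\beta 1\ge\beta^{\,n-1}$ with a constant independent of $w$; the latter is exactly the non-degeneracy already extracted from the cylinder-length and recurrence-time estimates behind Theorem~\ref{maintheorem}. The degeneracy of $\beta^{\,n-1}$ near $\beta=1$ is harmless: by countable stability of Hausdorff dimension one may compute $\dim_\textsf{H}\widetilde E$ on each region $[\beta_0,\infty)$ with $\beta_0>1$ and let $\beta_0\downarrow 1$, and since $\widetilde E$ is defined by a condition for \emph{infinitely many} $n$, the finitely many initial levels play no role. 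Thus the moving target is absorbed with no change to the final value.
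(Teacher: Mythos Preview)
Your approach is the same as the paper's: both use the derivative estimate $\tfrac{d}{d\beta}T^n_\beta 1\ge \beta^{n-1}$ (Lemma~\ref{lll2}) together with the Lipschitz bound on $x$ to show that the admissible set $\{\beta\in I_n^P(w):|T^n_\beta 1-x(\beta)|<\beta_0^{-\ell_n}\}$ is a single interval of length comparable to the constant-target case, and then transfer both the cover and the Cantor construction. For the upper bound your sketch is complete and matches Section~7.1.

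For the lower bound there is a genuine gap. Your monotonicity of $g_n(\beta)=T^n_\beta 1-x(\beta)$ shows that the admissible set is an interval, but not that it is \emph{non-empty} in the cylinders of the Cantor construction. At the left endpoint of each cylinder one has $T^n_\beta 1=0\le x(\beta)$, so a zero of $g_n$ requires a point in the cylinder where $T^n_\beta 1>x(\beta)$. In the constant case this is supplied by Lemma~\ref{ll2}, which says $\Gamma_n=\{T^n_\beta 1:\beta\in I_n^P(w)\}\ni (x_0+1)/2$ for the particular words $w\in\mathfrak{M}(\varepsilon^{(k-1)})$; but that argument needs $x_0<1$. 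The paper therefore splits (Section~7.2): either $x(\beta)\equiv 1$ on some subinterval and the recurrence construction of Section~6 applies verbatim, or by continuity there is a subinterval on which $\sup_\beta x(\beta)=x_0<1$, and then Lemma~\ref{ll2} still gives, in each chosen cylinder, a point $\beta_a$ with $T^{n}_{\beta_a}1>(x_0+1)/2>x(\beta_a)$, hence a root of $g_n$. Your proposal omits this dichotomy; without it the assertion that one ``retains at each step the identical family of full sub-cylinders carrying an admissible subinterval'' is unjustified when $x(\beta)$ is allowed to touch $1$. (Note also that the sub-cylinders at level $n_k+\ell_{n_k}$ are not literally the same as in the constant case: their terminal block $(w_1^{(k)},\dots,w_{\ell_{n_k}}^{(k)})$ is chosen relative to the location of $x(\beta)$, so only the \emph{construction scheme} transfers, not the actual Cantor set.)
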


Theorems \ref{maintheorem} (as well as Theorem \ref{t2}) can be viewed
as a generalization of the result of Persson and Schmeling
\cite{PerS}. But there are essential differences between the three
cases when the target $x_0=0$, $x_0\in (0,1)$ and $x_0=1$. The
following three remarks serve as an outline of the differences.

\begin{rem}
  The generality of $\{\ell_n\}_{n\ge 1}$ gives no extra difficulty
  compared with the special sequence $\{\ell_n = \alpha n\}_{n\ge
    1}$. However, there are some essential difficulties when
  generalizing $x_0$ from zero to non-zero.  The idea used in
  \cite{PerS}, to construct a suitable Cantor subset of
  $E\big(\{\ell_n\}_{n\ge 1}, x_0\big)$ to get the lower bound of
  $\dim_\textsf{H}E(\{\ell_n\}_{n\ge 1}, x_0)$, is not applicable for
  $x_0\ne 0$. For any $\beta>1$, let
  \[
  \varepsilon_1(x,\beta),\varepsilon_2(x,\beta),\ldots
  \]
  be the digit sequence of the $\beta$-expansion of $x$. To guarantee
  that the two points $T^n_{\beta}1$ and $x_0$ are close enough, a
  natural idea is to require that
  \begin{equation}\label{fff5}
    \varepsilon_{n+1}(1,\beta)=\varepsilon_1(x_0,
    \beta),\ \ldots,\ \varepsilon_{n+\ell}(1,\beta)=\varepsilon_{\ell}(x_0,
    \beta)
  \end{equation}
  for some $\ell\in \N$ sufficiently large. When $x_0=0$, the
  $\beta$-expansions of $x_0$ are the same (all digits are 0) no
  matter what $\beta$ is. Thus to fulfill (\ref{fff5}), one needs only
  to consider those $\beta$
  for which a long string of zeros follows $\varepsilon_n(1,\beta)$ in
  the $\beta$-expansion of 1.  But when $x_0\ne 0$, the
  $\beta$-expansions of $x_0$ under different $\beta$ are
  different. Furthermore, the expansion of $x_0$ is not known to us,
  since $\beta$ has not been determined yet.  This difference constitutes a main difficulty
  in constructing points $\beta$ fulfilling the conditions in
  defining $E\big(\{\ell_n\}_{n\ge 1}, x_0\big)$.

  To overcome this difficulty, a better understanding of the parameter
  space seems necessary. In Section~\ref{sec:distribution}, we analyse
  the length and the distribution of a cylinder in the parameter space
  which relies heavily on a  newly cited notion called {\em the recurrence time of
    a word}.
\end{rem}

\begin{rem}
  When $x_0\neq 1$, the set $E(\{\ell_n\}_{n\ge 1}, x_0)$ can be
  regarded as a type of shrinking target problem with fixed
  target. While when $x_0=1$, it becomes a type of recurrence
  properties. There are some differences between these two
  cases. Therefore, their proofs for the lower bounds of
  $\dim_\textsf{H}E(\{\ell_n\}_{n\ge 1}, x_0)$ are given separately in
  Sections 5 and 6.
\end{rem}

\begin{rem}
  If $x(\beta)$, when developed in base $\beta$, is the same for all
  $\beta\in (\beta_0, \beta_1)$, then with an argument based on
  Theorem 15 in \cite{PerS}, one can give the dimension of
  $\widetilde{E}(\{\ell\}_{n\ge 1}, x(\beta))$.  However as far as a
  general function $x(\beta)$ is concerned, the idea used in proving
  Theorem \ref{maintheorem} can also be applied to give a full
  solution of the dimension of $\widetilde{E}(\{\ell\}_{n\ge 1},
  x(\beta))$.
\end{rem}

For more dimensional results related to the $\beta$-expansion, the readers
are referred to \cite{FPS,PfS,Schme, TaW, Tho} and references therein. 
For more dimensional results concerning the shrinking target problems,
see \cite{Bu, BHKV,FaW, FMP1,HV,HV97, HilV, HV99, ST, SW, StU, Urb}
and references therein.

\section{Preliminary}
This section is devoted to recalling some basic properties of
$\beta$-expansions and fixing some notation. For more
information on $\beta$-expansions see \cite{Bl,Hof,Pa,Re} and
references therein.

 \smallskip

The $\beta$-expansion of real numbers was first introduced by
R\'{e}nyi \cite{Re}, which is given by the following algorithm. For
any $\beta>1$, let
\begin{equation}\label{f2}
  T_{\beta}(0):=0, \quad  T_{\beta}(x)=\beta x-\lfloor\, \beta x\rfloor,
  \ x\in (0,1),
\end{equation}
where $\lfloor \xi\rfloor$ is the integer part of $\xi\in\mathbb{R}$.
By taking
\[
\varepsilon_n(x,\beta)=\lfloor\, \beta T_{\beta}^{n-1}x\rfloor\in \mathbb{N}
\]
recursively for each $n\ge 1$, every $x\in [0,1)$ can be uniquely
  expanded into a finite or an infinite
  sequence
\begin{equation}\label{a2}
  x=\frac{\varepsilon_1(x,\beta)}{\beta} + \cdots +
  \frac{\varepsilon_n(x,\beta)}{\beta^n} + \cdots,
\end{equation}
which is called the $\beta$-expansion of $x$ and the sequence
$\{\varepsilon_n(x,\beta)\}_{n\ge 1}$ is called the digit sequence of
$x$.
We also write (\ref{a2}) as $\varepsilon(x,\beta) =
(\varepsilon_1(x,\beta), \ldots, \varepsilon_n(x,\beta),\ldots)$. The
system $([0,1), T_{\beta})$ is called a {\it $\beta$-dynamical system}
  or a {\em $\beta$-system}.

\begin{dfn}
  A finite or an infinite sequence $(w_1, w_2, \ldots)$ is said to be
  {\it admissible} (with respect to the base $\beta$), if there exists
  an $x\in [0,1)$ such that the digit sequence (in the
    $\beta$-expansion) of $x$ equals $(w_1, w_2, \ldots)$.
\end{dfn}
Denote by $\Sigma_{\beta}^n$ the collection of all $\beta$-admissible
sequences of length $n$ and by $\Sigma_{\beta}$ that of all
infinite admissible sequences. Write
$\mathcal{A}=\{0,1,\ldots,\beta-1\}$ when $\beta$ is an integer and
otherwise, $\mathcal{A}=\{0,1,\ldots,\lfloor\beta\rfloor\}$. Let
$S_\beta$ be the closure of $\Sigma_\beta$ under the product topology
on $\mathcal{A}^{\mathbb{N}}$.  Then $(S_\beta,\sigma|_{S_\beta})$ is
a subshift of the symbolic space $(\mathcal{A}^{\mathbb{N}},\sigma)$,
where $\sigma$ is the shift map on $\mathcal{A}^{\mathbb{N}}$.
%
%
%
%

Let us now turn to the {\em infinite $\beta$-expansion of 1}, which
plays an important role in the study of $\beta$-expansion.  At first,
apply the algorithm (\ref{f2}) to the number $x=1$. Then the number $1$ can also
be expanded into a series, denoted by
\[
1 = \frac{\varepsilon_1(1,\beta)}{\beta} + \cdots +
\frac{\varepsilon_n(1,\beta)}{\beta^n} + \cdots.
\]
If the above series is finite, i.e.\ there exists $m\ge1$ such that
$\varepsilon_m(1,\beta)\neq 0$ but $\varepsilon_n(1,\beta)=0$ for
$n>m$, then $\beta$ is called a simple Parry number. In this case, the
digit sequence of 1 is given as
\[
\varepsilon^*(1,\beta) := (\varepsilon_1^*(\beta),
\varepsilon_2^*(\beta), \ldots) = (\varepsilon_1(1,\beta), \ldots,
\varepsilon_{m-1}(1,\beta), \varepsilon_m(1,\beta)-1)^\infty,
\]
where $(w)^\infty$ denotes the periodic sequence $(w,w,w,\ldots)$.  If
$\beta$ is not a simple Parry number, the digit sequence of 1 is given
as
\[
\varepsilon^*(1,\beta) := (\varepsilon_1^*(\beta),
\varepsilon_2^*(\beta), \ldots) = (\varepsilon_1(1,
\beta),\varepsilon_2(1, \beta),\ldots).
\]
In both cases, the sequence
$(\varepsilon_1^*(\beta),\varepsilon_2^*(\beta),\ldots)$ is called
{\em the infinite $\beta$-expansion of 1} and we always have
that
\begin{equation}
  1 = \frac{\varepsilon_1^*(\beta)}{\beta} + \cdots +
  \frac{\varepsilon_n^*(\beta)}{\beta^n}+\cdots.
\end{equation}

The lexicographical order $\prec$ between the infinite sequences is
defined as follows: $$w=(w_1, w_2, \ldots, w_n,\ldots)\prec w'=(w_1',
w_2', \ldots, w_n',\ldots)$$ if there exists $k \geq 1$ such that $w_j
=w_j'$ for $1\leq j<k$, while $w_k<w_k'$. The notation $w\preceq w'$
means that $w\prec w'$ or $w=w'$. This ordering can be extended to
finite blocks by identifying a finite block
$(w_1,\ldots,w_n)$ with the sequence $(w_1,\ldots,w_n,0,0,\ldots)$.


The following result due to Parry \cite{Pa} is a criterion for the
admissibility of a sequence which relies heavily on the {\em infinite
  $\beta$-expansion of 1}.
\begin{thm}[Parry \cite{Pa}]\label{t4}
\

{\rm{(1)}} Let $\beta> 1$. For each $n\ge 1$, a block of non-negative
integers $w=(w_1,\ldots, w_n)$ belongs to $ \Sigma_{\beta}^n$ if and
only if
\[
\sigma^i w \preceq \varepsilon_1^*(1,\beta),\ldots,
\varepsilon_{n-i}^*(1,\beta)\ \ \mbox{for all}\ \ 0\le i<n.
\]

{\rm{(2)}} The function $\beta \mapsto \varepsilon^*(1,\beta)$ is
increasing with respect to the variable $\beta>1$. Therefore, if
$1<\beta_1<\beta_2$, then
\[
\Sigma_{\beta_1}\subset \Sigma_{\beta_2},
\ \ \Sigma_{\beta_1}^n\subset \Sigma_{\beta_2}^n \ \ (\mbox{for all}
\ n\ge 1).
\]
\end{thm}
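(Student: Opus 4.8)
The plan is to prove both parts by translating systematically between the numerical ordering on $[0,1)$ and the lexicographic ordering $\prec$ on digit sequences, the bridge being the evaluation map $\pi_\beta(w):=\sum_{j\ge1}w_j\beta^{-j}$. Three foundational facts carry the argument. First, an \emph{order-agreement principle}: for $x,y\in[0,1)$ one has $x<y$ if and only if $\varepsilon(x,\beta)\prec\varepsilon(y,\beta)$, one direction being immediate from \eqref{a2} and the other following because, at the first digit where two greedy expansions differ, the larger digit produces a strictly larger value. Second, a \emph{maximality} property of the greedy algorithm \eqref{f2}: among all nonnegative integer sequences $w$ with $\pi_\beta(w)=t$, the expansion $\varepsilon(t,\beta)$ is lexicographically the largest. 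Third, the boundary fact $\pi_\beta(\varepsilon^*(1,\beta))=1$ for every $\beta$, together with the observation that a tail $\sigma^i\varepsilon(x,\beta)$ is the expansion of the point $T_\beta^i x\in[0,1)$ and hence, by the first fact, lies strictly below $\varepsilon^*(1,\beta)$.

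For the forward direction of part~(1), let $w=(w_1,\dots,w_n)\in\Sigma_\beta^n$ and pick $x\in[0,1)$ realising $w$ as its first $n$ digits. For each $0\le i<n$ the point $T_\beta^i x$ lies in $[0,1)$ with digit sequence beginning $(w_{i+1},\dots,w_n)$; the third fact gives $(w_{i+1},\dots,w_n)\preceq(\varepsilon_1^*,\dots,\varepsilon_{n-i}^*)$, which is exactly the asserted shift inequality. For the converse, assuming all shift inequalities, I would put $x=\pi_\beta(w)$ and prove by induction that the greedy algorithm returns precisely $w_1,\dots,w_n$: the inequality $\sigma^i w\preceq(\varepsilon_1^*,\dots)$ keeps each remainder $T_\beta^i x$ inside $[0,1)$, so the floor $\lfloor\beta\,T_\beta^i x\rfloor$ cannot exceed $w_{i+1}$, while maximality forbids it from being smaller. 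Hence $w$ is admissible.

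For part~(2), fix $1<\beta_1<\beta_2$ and write $a=\varepsilon^*(1,\beta_1)$, $b=\varepsilon^*(1,\beta_2)$. Since $\beta_2^{-j}<\beta_1^{-j}$ for all $j$ and $a_1\ge1$, evaluating $a$ at the larger base gives $\pi_{\beta_2}(a)<\pi_{\beta_1}(a)=1$, so $t:=\pi_{\beta_2}(a)\in[0,1)$. By maximality $a\preceq\varepsilon(t,\beta_2)$, and by the third fact $\varepsilon(t,\beta_2)\prec\varepsilon^*(1,\beta_2)=b$; hence $a\prec b$, i.e.\ $\beta\mapsto\varepsilon^*(1,\beta)$ is increasing. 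The inclusions $\Sigma_{\beta_1}^n\subset\Sigma_{\beta_2}^n$ then follow from part~(1), since enlarging $\beta$ only relaxes the upper bounds $\varepsilon_k^*$ in the admissibility inequalities.

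The main obstacle I anticipate is the converse of part~(1) together with the precise handling of equality cases. The order-agreement and maximality principles must be proved by controlling the tails of competing expansions, and this is exactly where admissibility enters; the delicate point is the boundary behaviour at simple Parry numbers, where $\varepsilon^*(1,\beta)$ is produced by the modified eventually-periodic rule rather than by the raw algorithm. There one must argue throughout with $\varepsilon^*(1,\beta)$ rather than with $\varepsilon(1,\beta)$ to keep the inequalities sharp and to ensure that a block meeting the shift conditions with equality is still realised by a genuine point of $[0,1)$.
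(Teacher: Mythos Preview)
The paper does not prove this theorem: it is stated as a classical result due to Parry and cited without proof, serving only as background for the later arguments. There is therefore nothing in the paper to compare your attempt against.

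That said, your sketch is the standard route to Parry's characterisation and is essentially sound. The three ingredients you isolate---order agreement between $[0,1)$ and lexicographic order, lexicographic maximality of the greedy expansion among all representations of a given value, and the fact that $\sigma^i\varepsilon(x,\beta)=\varepsilon(T_\beta^i x,\beta)$---are exactly what one needs, and your deduction of part~(2) from part~(1) via $\pi_{\beta_2}(\varepsilon^*(1,\beta_1))<1$ is the clean way to do it. You have also correctly located the only genuinely delicate point: in the converse direction of part~(1), when some shift $\sigma^i w$ meets the bound with equality, one must use $\varepsilon^*(1,\beta)$ rather than $\varepsilon(1,\beta)$ and argue carefully that the induction still produces $w_{i+1}$ at each step (in particular, that the remainder does not reach $1$). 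If you want to turn the sketch into a full proof, that boundary case is where the work lies; the rest is routine.
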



At the same time, Parry also presented a characterization of when a
sequence of integers is the expansion of 1 for some $\beta>1$. First,
we cite a notation: {\em self-admissible}.
\begin{dfn}
  A word $w = (\varepsilon_1, \ldots, \varepsilon_n)$ is called
  self-admissible if for all $1\le i <n$
  \[
  \sigma^i(\varepsilon_1,\ldots,\varepsilon_n)\preceq
  \varepsilon_1,\ldots,\varepsilon_{n-i}.
  \]
  An infinite digit sequence $w=(\varepsilon_1,\varepsilon_2, \ldots)$ is said to
  be self-admissible if for all $i\ge 1$, $\sigma^iw\prec w$.
\end{dfn}

\begin{thm}[Parry \cite{Pa}]
  A digit sequence $(\varepsilon_1, \varepsilon_2, \ldots )$ is the
 expansion of $1$ for some $\beta>1$ if and only if it is
  self-admissible.
\end{thm}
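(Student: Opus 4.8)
The plan is to prove the two implications separately, writing $\mathrm{val}_\beta(u)=\sum_{k\ge1}u_k\beta^{-k}$ for the real number represented by a digit sequence $u=(u_1,u_2,\ldots)$ in base $\beta$, and recalling that the expansion $\varepsilon^*(1,\beta)$ is the lexicographically largest among all $\beta$-representations of $1$. Throughout I identify ``the expansion of $1$'' with the infinite expansion $\varepsilon^*(1,\beta)$, and I set aside until the end the degenerate sequences of finite support (for which one must pass to the associated simple Parry number, or which force $\beta=1$ and are excluded).

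For \emph{necessity}, suppose $w=\varepsilon^*(1,\beta)$ for some $\beta>1$. For each $i\ge1$ the shifted sequence $\sigma^i w$ is the expansion of the point $T_\beta^i 1$, which lies in $[0,1)$ since $T_\beta^i1=\beta T^{i-1}_\beta1-\lfloor\beta T^{i-1}_\beta1\rfloor\in[0,1)$. Applying the admissibility criterion of Theorem~\ref{t4}(1) to arbitrarily long prefixes of this expansion gives $\sigma^i w\preceq w$. The inequality must be strict: if $\sigma^i w=w$, then $T^i_\beta1$ and $1$ would share the same expansion, hence $T^i_\beta1=\mathrm{val}_\beta(w)=1$, contradicting $T^i_\beta1<1$. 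Thus $\sigma^i w\prec w$ for all $i\ge1$, i.e.\ $w$ is self-admissible.

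For \emph{sufficiency} --- the substantial direction --- suppose $w=(\varepsilon_1,\varepsilon_2,\ldots)$ is self-admissible with $\varepsilon_1\ge1$. First I would record that the digits are bounded: the leading digit of $\sigma^iw$ is $\varepsilon_{i+1}$, and $\sigma^iw\prec w$ forces $\varepsilon_{i+1}\le\varepsilon_1$, so $0\le\varepsilon_n\le\varepsilon_1$ and $\mathrm{val}_\beta(w)$ converges for every $\beta>1$. The candidate base comes from the intermediate value theorem: $f(\beta)=\mathrm{val}_\beta(w)$ is continuous and strictly decreasing on $(1,\infty)$, with $f(\beta)\to0$ as $\beta\to\infty$ and $\lim_{\beta\to1^+}f(\beta)=\sum_n\varepsilon_n>1$ in the non-degenerate case, so there is a unique $\beta_0>1$ with $\mathrm{val}_{\beta_0}(w)=1$. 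It then remains to check that the greedy algorithm applied to $1$ in base $\beta_0$ regenerates exactly the digits $\varepsilon_n$.

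The key reduction is that this last claim is equivalent to showing that every tail value $t_n:=\mathrm{val}_{\beta_0}(\sigma^n w)$ satisfies $t_n\in[0,1)$ for all $n\ge1$: indeed $t_0=1$, and the identity $\beta_0 t_n=\varepsilon_{n+1}+t_{n+1}$ shows inductively that once $t_{n+1}\in[0,1)$ one has $\lfloor\beta_0 t_n\rfloor=\varepsilon_{n+1}$ and $T_{\beta_0}^{n+1}1=t_{n+1}$, so the greedy digits coincide with $w$. I expect this verification to be the main obstacle. The difficulty is that self-admissibility supplies only the \emph{lexicographic} inequalities $\sigma^n w\prec w$, and lexicographic smallness does not in general transfer to smallness of the represented value (the usual ``$0.0999\ldots=0.1$'' carrying phenomenon, which is not excluded here since the digit bound $\varepsilon_1\le\beta_0-1$ fails). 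To bridge this gap one cannot merely compare $\sigma^n w$ with $w$ at their first disagreement; one must also use the self-admissibility of the \emph{shifted} tails. Concretely, I would argue by contradiction at the least index $n$ with $t_n\ge1$, expand $t_n-1=\beta_0^{-j}\big((\varepsilon_{n+j}-\varepsilon_j)+t_{n+j}-t_j\big)$ in terms of the first position $j$ where $\sigma^n w$ and $w$ differ, and feed this relation back into itself using $\sigma^{n+j}w\prec w$, so that the single strict lexicographic gain at position $j$ propagates and forces $t_n<1$; equivalently, one shows that $\mathrm{val}_{\beta_0}(w)=1$ together with self-admissibility characterizes $w$ as the unique greedy representation of $1$. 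Finally I would dispose of the degenerate finite-support sequences by direct inspection.
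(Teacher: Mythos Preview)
The paper does not supply a proof of this statement; it is quoted from Parry's 1960 paper as part of the preliminaries. So there is nothing in the paper to compare your argument against, and I assess your sketch on its own.

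Your outline follows the standard route and is largely sound, but two points need attention.

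\emph{Which expansion.} Identifying ``the expansion of $1$'' with $\varepsilon^*(1,\beta)$ is the wrong choice. For a simple Parry number, $\varepsilon^*(1,\beta)=(\varepsilon_1,\ldots,\varepsilon_{m-1},\varepsilon_m{-}1)^\infty$ is purely periodic, so $\sigma^m\varepsilon^*=\varepsilon^*$ violates the \emph{strict} inequality in the definition of self-admissibility; such $\varepsilon^*$ is not self-admissible. The statement is really about $\varepsilon(1,\beta)$, the possibly terminating greedy expansion. This also matters in your necessity proof: the sentence ``$\sigma^i w$ is the expansion of $T_\beta^i1$'' is correct for $w=\varepsilon(1,\beta)$ but fails for $w=\varepsilon^*(1,\beta)$ at a simple Parry number (there $\sigma^m\varepsilon^*$ represents $1$, whereas $T_\beta^m1=0$). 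With $w=\varepsilon(1,\beta)$ the necessity argument goes through via Theorem~\ref{t4}.

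\emph{Closing the sufficiency.} Your reduction to the inequalities $t_n:=\mathrm{val}_{\beta_0}(\sigma^nw)<1$ is exactly right, as is the key inequality
\[
t_n-1\;\le\;\beta_0^{-j}\bigl((t_{n+j}-1)-t_j\bigr),
\]
where $j$ is the first disagreement between $\sigma^nw$ and $w$. However, ``arguing at the least index $n$ with $t_n\ge1$'' does not close by itself: the recursion sends you from $n$ to the larger index $n+j$, about which minimality of $n$ says nothing. The clean organisation is a supremum argument. Set $M=\sup_{n\ge1}t_n<\infty$, feed $t_{n+j}\le M$ and $t_j\ge0$ into the display to get $t_n-1\le\beta_0^{-1}(M-1)$ for every $n\ge1$, and take the supremum to conclude $M\le1$. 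Equality $t_n=1$ then forces $t_j=0$, i.e.\ $\sigma^jw=0^\infty$, which is precisely the finite-support situation you set aside (and there the same recursion terminates in finitely many steps). Your sketch has the right ingredients, but the contradiction needs to be run through a supremum rather than a minimal counterexample.
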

The following result of R\'{e}nyi implies that the dynamical system
$([0,1), T_{\beta})$ admits $\log \beta$ as its topological
  entropy.
  \begin{thm}[R\'{e}nyi \cite{Re}] \label{Re} Let
    $\beta>1$. For any $n\ge 1$,
    \[
    \beta^n\le \sharp \Sigma_{\beta}^n\le {\beta^{n+1}}/({\beta-1}),
    \]
    here and hereafter $\sharp$ denotes the cardinality of a finite
    set.
  \end{thm}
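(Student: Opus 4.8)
The plan is to realise $\Sigma_\beta^n$ geometrically, identifying each admissible word $w=(w_1,\dots,w_n)$ with the cylinder
\[
I(w)=\big\{\,x\in[0,1): \varepsilon_i(x,\beta)=w_i,\ 1\le i\le n\,\big\},
\]
a subinterval of $[0,1)$ on which $T_\beta^n$ acts affinely with slope $\beta^n$ (being a composition of $n$ branches of slope $\beta$). Since every $x\in[0,1)$ has a well-defined digit sequence, these cylinders form a partition of $[0,1)$ into exactly $\sharp\Sigma_\beta^n$ nonempty intervals, and $|I(w)|=\beta^{-n}\,|T_\beta^n I(w)|\le\beta^{-n}$ because $T_\beta^n I(w)\subseteq[0,1)$. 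The lower bound is then immediate: summing over all admissible $w$ gives $1=\sum_{w}|I(w)|\le \sharp\Sigma_\beta^n\cdot\beta^{-n}$, so $\sharp\Sigma_\beta^n\ge\beta^n$.

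For the upper bound I would split the cylinders into \emph{full} ones, where $T_\beta^n I(w)=[0,1)$ so that $|I(w)|=\beta^{-n}$ exactly, and \emph{non-full} ones, where $T_\beta^n I(w)=[0,y)$ with $y<1$. Write $F_n$ and $N_n$ for their respective numbers, so $\sharp\Sigma_\beta^n=F_n+N_n$. Disjointness of the full cylinders in $[0,1)$, each of length exactly $\beta^{-n}$, gives $F_n\le\beta^n$ straight from Lebesgue measure. The decisive structural observation concerns one refinement step: a depth-$n$ cylinder with image $[0,y)$ is subdivided by pulling back the partition points $j/\beta$ lying in $(0,y)$, so every child except possibly the rightmost (largest-digit) one has its image filling a full cell and is therefore full, while the rightmost child has image $[0,\beta y-\lfloor\beta y\rfloor)$ and is the only non-full one (and there is none at all in the degenerate case $y=k/\beta$). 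Hence each depth-$n$ cylinder, full or not, contributes at most one non-full child at depth $n+1$, yielding $N_{n+1}\le\sharp\Sigma_\beta^n$.

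Combining the two facts gives the recursion
\[
\sharp\Sigma_\beta^{n+1}=F_{n+1}+N_{n+1}\le\beta^{n+1}+\sharp\Sigma_\beta^n,
\]
with base case $\sharp\Sigma_\beta^1=\lfloor\beta\rfloor+1$ (and $\sharp\Sigma_\beta^1=\beta$ when $\beta\in\N$, in which case all branches are full and $\sharp\Sigma_\beta^n=\beta^n$). Iterating gives $\sharp\Sigma_\beta^n\le\sharp\Sigma_\beta^1+\sum_{k=2}^n\beta^k$; summing the geometric series $\sum_{k=2}^n\beta^k=(\beta^{n+1}-\beta^2)/(\beta-1)$ and using $\lfloor\beta\rfloor+1\le\beta+1<\beta^2/(\beta-1)$ to absorb the base term delivers $\sharp\Sigma_\beta^n\le\beta^{n+1}/(\beta-1)$, completing the proof.

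The step I expect to be the main obstacle is the structural ``at most one non-full child'' claim, where the specific geometry of $T_\beta$ (all branches full of slope $\beta$ except the single rightmost partial branch) is essential. One must confirm that $T_\beta^n I(w)$ is genuinely a left-anchored interval $[0,y)$ and treat the degenerate case $y=k/\beta$ (where the cylinder spawns \emph{no} non-full child, which only reinforces $N_{n+1}\le\sharp\Sigma_\beta^n$); this degeneracy is precisely where $\beta$ being a simple Parry number enters. Everything else is bookkeeping with Lebesgue measure and a geometric sum, so no estimate beyond this combinatorial-geometric lemma is required, and in particular the argument does not need Parry's admissibility criterion.
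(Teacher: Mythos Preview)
The paper does not give its own proof of this statement: Theorem~\ref{Re} is quoted from R\'enyi's original paper \cite{Re} and used as a black box, so there is nothing in the present paper to compare your argument against.

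That said, your proof is correct and is essentially the standard modern argument. The lower bound via $\sum_w |I(w)|=1$ and $|I(w)|\le\beta^{-n}$ is immediate. For the upper bound, your key structural lemma---that each order-$n$ cylinder has at most one non-full child---is exactly right: on a cylinder with image $[0,y)$ the refinement pulls back the partition points $j/\beta$, so every child but possibly the rightmost one is full. Combined with $F_n\le\beta^n$ (disjoint intervals of length $\beta^{-n}$ inside $[0,1)$) this gives the recursion $\sharp\Sigma_\beta^{n+1}\le\beta^{n+1}+\sharp\Sigma_\beta^n$, and the geometric sum closes. One minor clean-up: it is slightly tidier to start the iteration from $\sharp\Sigma_\beta^0=1$, which yields directly
\[
\sharp\Sigma_\beta^n\le 1+\sum_{k=1}^n\beta^k=\frac{\beta^{n+1}-1}{\beta-1}<\frac{\beta^{n+1}}{\beta-1},
\]
avoiding the separate verification that $\lfloor\beta\rfloor+1\le\beta^2/(\beta-1)$. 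Your handling of the degenerate case $y=k/\beta$ and of integer $\beta$ is accurate and, as you note, only strengthens the inequality.
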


\section{Distribution of regular cylinders in parameter space} \label{sec:distribution}
From this section on, we turn to the parameter space $\{\,\beta\in \R:
\beta>1\,\}$, instead of considering a fixed $\beta>1$. We will address
the length of a cylinder in the parameter space, which is closely
related to the notion of {\em recurrence time}.

\begin{dfn}\label{cylinderpar}
  Let $(\varepsilon_1, \ldots, \varepsilon_n)$ be self-admissible.  A
  cylinder in the parameter space is defined as
  \[
  I_n^P(\varepsilon_1,\ldots,\varepsilon_n)=\Big\{\, \beta>1:
  \varepsilon_1(1,\beta)=
  \varepsilon_1,\ldots,\varepsilon_n(1,\beta)=\varepsilon_n \,\Big\},
  \]
  i.e., the set of $\beta$ for which the $\beta$-expansion of 1 begins
  with the common prefix $\varepsilon_1, \ldots,
  \varepsilon_n$. Denote by $C_n^P$ the collection of cylinders of
  order $n$ in the parameter space.
\end{dfn}

\subsection{Recurrence time of words}\label{sec3.1}

\begin{dfn}
  Let $w=(\varepsilon_1, \ldots, \varepsilon_n)$ be a word of length
  $n$. The recurrence time $\tau(w)$ of $w$ is defined as
  \[
  \tau(w):=\inf\big\{\, k\ge 1:
  \sigma^k(\varepsilon_1,\ldots,\varepsilon_n) =
  (\varepsilon_1,\ldots,\varepsilon_{n-k}) \,\big\}.
  \]
  If such an integer $k$ does not exist, then $\tau(w)$ is defined to
  be $n$ and $w$ is said to be of full recurrence time.
\end{dfn}

Applying the definition of recurrence time and the criterion of
self-admissibility of a sequence, we obtain the following.
\begin{lem}\label{lem3.3}
  Let $w=(\varepsilon_1,\ldots, \varepsilon_n)$ be self-admissible
  with the recurrence time $\tau(w)=k$. Then for each $1\le
  i<k$,
  \begin{equation}\label{ff1}
    \varepsilon_{i+1},\ldots, \varepsilon_k \prec
    \varepsilon_1,\ldots,\varepsilon_{k-i}.
  \end{equation}
\end{lem}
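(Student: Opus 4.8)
The plan is to play off two pieces of information about $w$ against each other: the lexicographic inequalities supplied by self-admissibility, and the single equality at the period $k$ supplied by $\tau(w)=k$. First I would fix $i$ with $1\le i<k$ and extract a \emph{strict} inequality on the full word. Self-admissibility gives $(\varepsilon_{i+1},\ldots,\varepsilon_n)\preceq(\varepsilon_1,\ldots,\varepsilon_{n-i})$. Since $i<k=\tau(w)$, equality here is impossible, because an equality $\sigma^i(\varepsilon_1,\ldots,\varepsilon_n)=(\varepsilon_1,\ldots,\varepsilon_{n-i})$ would place $i$ in the infimum set defining $\tau(w)$ and force $\tau(w)\le i<k$. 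Hence the inequality is strict, and I may let $m$ be the first coordinate of disagreement, so that $\varepsilon_{i+t}=\varepsilon_t$ for $1\le t<m$ while $\varepsilon_{i+m}<\varepsilon_m$.

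The key observation is that the desired conclusion $(\varepsilon_{i+1},\ldots,\varepsilon_k)\prec(\varepsilon_1,\ldots,\varepsilon_{k-i})$ is \emph{exactly} the statement that this first disagreement occurs no later than position $k-i$, i.e. $m\le k-i$. So everything reduces to ruling out the possibility $m\ge k-i+1$.

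To rule it out, I would argue by contradiction and bring in the period relation. Assume $m\ge k-i+1$ and write $b=k-i$. The equality achieved at $\tau(w)=k$ means $\varepsilon_{k+s}=\varepsilon_s$ for $1\le s\le n-k$, which lets me rewrite $\varepsilon_{i+t}=\varepsilon_{i+t-k}=\varepsilon_{t-b}$ whenever $i+t>k$, that is whenever $t>b$. Substituting this into the agreement $\varepsilon_{i+t}=\varepsilon_t$ on the range $b<t<m$ turns it into $\varepsilon_t=\varepsilon_{t-b}$ there, so $w$ carries a local period $b$ on positions up to $m-1$; and the strict drop at $m$ becomes $\varepsilon_m>\varepsilon_{m-b}$, a strict \emph{increase} across one period. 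Aligning to the block boundary, with $c=(m-1)\bmod b$ and $j=m-1-c$, the local period $b$ forces $\varepsilon_{j+r}=\varepsilon_r$ for $1\le r\le c$, while $\varepsilon_{j+c+1}=\varepsilon_m>\varepsilon_{c+1}$. Therefore $\sigma^j(\varepsilon_1,\ldots,\varepsilon_n)\succ(\varepsilon_1,\ldots,\varepsilon_{n-j})$ with $1\le j<n$, contradicting the self-admissibility of $w$. This contradiction gives $m\le k-i$, which is the claim.

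I expect the block-alignment step to be the main obstacle, and it is worth flagging that it cannot be avoided by a purely period-theoretic argument: two competing periods $k$ and $k-i$ do not by themselves force a smaller global period (simple examples of a word with a bordered length-$k$ prefix but smallest global period $k$ exist), so it is essential that self-admissibility forbids the upward jump $\varepsilon_m>\varepsilon_{m-b}$. The careful bookkeeping is exactly in converting that forbidden jump into an explicit shift $\sigma^j w$ that lexicographically overshoots the prefix.
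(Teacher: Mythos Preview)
Your argument is correct, but it takes a longer route than the paper's. Both proofs begin identically: self-admissibility together with the minimality of $\tau(w)=k$ gives the strict inequality $(\varepsilon_{i+1},\ldots,\varepsilon_n)\prec(\varepsilon_1,\ldots,\varepsilon_{n-i})$. From there, however, the paper never introduces the first-disagreement index $m$. It simply splits both words at position $k-i$ and compares the two \emph{suffixes}: the left suffix is $(\varepsilon_{k+1},\ldots,\varepsilon_n)=(\varepsilon_1,\ldots,\varepsilon_{n-k})$ by the period relation at $k$, while the right suffix $(\varepsilon_{k-i+1},\ldots,\varepsilon_{n-i})$ is $\preceq(\varepsilon_1,\ldots,\varepsilon_{n-k})$ by one further application of self-admissibility, this time at the shift $k-i$. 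So the left suffix dominates the right, yet the full left word is strictly smaller; the strict drop must therefore already occur within the length-$(k-i)$ prefixes, which is the claim. Your proof reaches the same endpoint by assuming $m>k-i$, extracting the local period $b=k-i$ on $(\varepsilon_1,\ldots,\varepsilon_{m-1})$, and then manufacturing a shift $j$ (a multiple of $b$) at which self-admissibility fails. The block-alignment bookkeeping you flagged as the main obstacle is precisely what the paper sidesteps: its single use of self-admissibility at shift $k-i$ replaces your search for the correct multiple of $b$, at the price of a slightly less explicit picture of where the drop actually sits.
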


\begin{proof}
  The self-admissibility of $w$ ensures that
  \[
  \varepsilon_{i+1},\ldots,\varepsilon_k, \varepsilon_{k+1},\ldots,
  \varepsilon_n\preceq \varepsilon_1,\ldots, \varepsilon_{k-i},
  \varepsilon_{k-i+1},\ldots, \varepsilon_{n-i}.
  \]

  The recurrence time $\tau(w)=k$ of $w$   implies that for $1\le i<k$,
  \[
  \varepsilon_{i+1},\ldots,\varepsilon_k, \varepsilon_{k+1},\ldots,
  \varepsilon_n\neq \varepsilon_1,\ldots, \varepsilon_{k-i},
  \varepsilon_{k-i+1},\ldots, \varepsilon_{n-i}.
  \]
  Combining the above two facts, we arrive at
  \begin{equation}\label{f9}
    \varepsilon_{i+1},\ldots,\varepsilon_k, \varepsilon_{k+1},\ldots,
    \varepsilon_n \prec \varepsilon_1,\ldots, \varepsilon_{k-i},
    \varepsilon_{k-i+1},\ldots, \varepsilon_{n-i}.
  \end{equation}
  Next we compare the suffixes of the two words in (\ref{f9}). By the
  definition of $\tau(\omega)$, the left one ends with
  \[
  \varepsilon_{k+1}, \ldots, \varepsilon_n=\varepsilon_1,
  \ldots,\varepsilon_{n-k},
  \]
  while the right one ends with
  \[
  \varepsilon_{k-i+1}, \ldots, \varepsilon_{n-i}.
  \]
  By the self-admissibility of $\varepsilon_1,\cdots,\varepsilon_n$,
  we get
  \begin{equation}\label{fff8}
    \varepsilon_{k+1}, \ldots, \varepsilon_{n} = \varepsilon_1,
    \ldots, \varepsilon_{n-k}\succeq\varepsilon_{k-i+1}, \ldots,
    \varepsilon_{n-i}.
  \end{equation}
  Then the formula (\ref{f9}) and (\ref{fff8}) enable us to conclude the
  result.
\end{proof}

We give a sufficient condition ensuring a word being of full recurrence time.
\begin{lem}\label{l3} Assume that
$(\varepsilon_1, \ldots, \varepsilon_{m-1},\varepsilon_m)$ and
  $(\varepsilon_1, \ldots, \varepsilon_{m-1},
  \overline{\varepsilon}_m)$ are both self-admissible and $0\le
  \varepsilon_m<\overline{\varepsilon}_m$. Then
\[
\tau(\varepsilon_1, \ldots, \varepsilon_m) = m.
\]
\end{lem}
\begin{proof}
  Let $\tau(\varepsilon_1, \ldots, \varepsilon_m)=k.$ Suppose that
  $k<m$. We will show that this leads to a contradiction.  Write
  $m=tk+i$ with $0<i\le k$. By the definition of the recurrence time
  $\tau$, we have
  \begin{equation}\label{8}
    \sigma^{tk}(\varepsilon_1, \ldots, \varepsilon_m) =
    (\varepsilon_{tk+1}, \ldots, \varepsilon_{m}) = (\varepsilon_1,
    \ldots, \varepsilon_i).
  \end{equation}

  From the self-admissibility of the other sequence $(\varepsilon_1,
  \ldots, \varepsilon_{m-1},\overline{\varepsilon}_m)$, we know
  \begin{equation}\label{9}
    \sigma^{tk}(\varepsilon_1, \ldots, \varepsilon_{m-1},
    \overline{\varepsilon}_m) = (\varepsilon_{tk+1}, \ldots,
    \overline{\varepsilon}_{m})\preceq (\varepsilon_1, \ldots,
    \varepsilon_i).
  \end{equation}
The assumption $\varepsilon_m<\overline{\varepsilon}_m$ implies
  that
  \[
  (\varepsilon_{tk+1}, \ldots, \varepsilon_{m}) \prec
  (\varepsilon_{tk+1}, \ldots, \overline{\varepsilon}_{m}).
  \]
  Combining this with (\ref{8}) and (\ref{9}), we arrive at a
  contradiction $(\varepsilon_1, \ldots, \varepsilon_i) \prec
  (\varepsilon_1, \ldots, \varepsilon_i)$.
\end{proof}

\subsection{Maximal admissible sequence in parameter space}

Now we recall a result of Schmeling \cite{Schme} concerning the length
of $I_n^P(\varepsilon_1, \ldots, \varepsilon_n)$.
\begin{lem}\cite{Schme}\label{l1}
  The cylinder $I_n^P(\varepsilon_1, \ldots, \varepsilon_n)$ is a
  half-open interval $[\beta_0, \beta_1)$. The left endpoint $\beta_0$
    is given as the only solution in $(1, \infty)$ of the equation
    \[
    1=\frac{\varepsilon_1}{\beta}+\cdots+\frac{\varepsilon_n}{\beta^n}.
    \]
    The right endpoint $\beta_1$ is the limit of the unique solutions
    $\{\beta_N\}_{N\ge n}$ in $(1,\infty)$ of the equations
    \[
    1 = \frac{\varepsilon_1}{\beta} + \cdots +
    \frac{\varepsilon_n}{\beta^n} +
    \frac{\varepsilon_{n+1}}{\beta^{n+1}} + \cdots +
    \frac{\varepsilon_N}{\beta^N}, \quad N\ge n
    \]
    where
    $(\varepsilon_1, \ldots, \varepsilon_n, \varepsilon_{n+1}, \ldots, \varepsilon_N)$
    is the maximal self-admissible sequence beginning with
    $\varepsilon_1, \ldots, \varepsilon_n$ in the lexicographical
    order. Moreover,
    \[\big|I_n^P(\varepsilon_1,\dots,\varepsilon_n)\big|\le
    {\beta_1^{-n+1}}.
    \]
\end{lem}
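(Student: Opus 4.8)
The plan is to read off the interval structure and the two endpoint formulas from Parry's monotonicity (Theorem~\ref{t4}(2)), and then to prove the length estimate by directly comparing the two equations that define the endpoints. Throughout I write $\phi_n(\beta)=\sum_{k=1}^n\varepsilon_k\beta^{-k}$. The interval structure comes for free: by Theorem~\ref{t4}(2) the map $\beta\mapsto\varepsilon^*(1,\beta)$ is increasing for the lexicographic order, so the property of having the fixed prefix $(\varepsilon_1,\dots,\varepsilon_n)$ in the expansion of $1$ is inherited by every parameter lying between two parameters that possess it; hence $I_n^P$ is convex, i.e.\ an interval.

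For the endpoints I would proceed as follows. The function $\phi_n$ is continuous and strictly decreasing on $(1,\infty)$ (as $\varepsilon_1\ge 1$) with $\phi_n(\beta)\to 0$ as $\beta\to\infty$, so $\phi_n(\beta)=1$ has a unique root $\beta_0$. At $\beta_0$ the greedy algorithm terminates exactly at the block $(\varepsilon_1,\dots,\varepsilon_n)$, so $\beta_0\in I_n^P$; while for $\beta<\beta_0$ one has $\phi_n(\beta)>1$, and if such a $\beta$ carried the prefix then $1=\sum_{k\ge1}\varepsilon_k(1,\beta)\beta^{-k}\ge\phi_n(\beta)>1$, a contradiction, so $\beta\notin I_n^P$. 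For the right endpoint I would let $(\varepsilon_1,\dots,\varepsilon_n,\varepsilon_{n+1},\dots)$ be the lexicographically maximal self-admissible sequence extending the prefix; being self-admissible it is the expansion of $1$ of a unique $\beta_1$, solving $1=\sum_{k\ge 1}\varepsilon_k\beta_1^{-k}$, and the truncated equations $1=\sum_{k=1}^N\varepsilon_k\beta^{-k}$ have unique roots $\beta_N$ which increase to $\beta_1$ (each added positive term forces a larger root). Monotonicity once more gives that every $\beta<\beta_1$ with the prefix lies in $I_n^P$ while $\beta_1$ does not, so $I_n^P=[\beta_0,\beta_1)$.

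The step I expect to be the real obstacle is the sharp length bound $\beta_1-\beta_0\le\beta_1^{-n+1}$, because the naive mean value estimate (using $|\phi_n'|\ge\beta_1^{-2}$) yields only the weaker exponent $-n+2$. My plan is to subtract the two endpoint equations to get
\[
\beta_1^{-n}\,T_{\beta_1}^{n}1=\sum_{k=1}^n\varepsilon_k\bigl(\beta_0^{-k}-\beta_1^{-k}\bigr),
\]
then factor $\beta_0^{-k}-\beta_1^{-k}=(\beta_0^{-1}-\beta_1^{-1})\sum_{j=0}^{k-1}\beta_0^{-j}\beta_1^{-(k-1-j)}$ and use $\beta_0^{-1}-\beta_1^{-1}=(\beta_1-\beta_0)/(\beta_0\beta_1)$. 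Retaining only the $j=k-1$ term in each inner sum and invoking the left-endpoint identity $\sum_{k=1}^n\varepsilon_k\beta_0^{-(k-1)}=\beta_0$ bounds the resulting double sum below by $\beta_0$; together with $T_{\beta_1}^{n}1\le 1$ this gives
\[
\beta_1-\beta_0=\frac{\beta_0\,\beta_1^{-n+1}\,T_{\beta_1}^{n}1}{\sum_{k=1}^n\varepsilon_k\sum_{j=0}^{k-1}\beta_0^{-j}\beta_1^{-(k-1-j)}}\le\beta_1^{-n+1}.
\]
The delicate point is precisely this lower bound on the double sum: it is what converts the crude exponent $-n+2$ into the stated $-n+1$, and it works only because it reuses the defining equation of the left endpoint $\beta_0$.
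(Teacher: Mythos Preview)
The paper does not prove Lemma~\ref{l1}; it is quoted from Schmeling~\cite{Schme} without argument, so there is no in-paper proof to compare against. Your outline of the interval structure and of the two endpoint descriptions is standard and essentially correct, though two points deserve a bit more care: (i) the existence of a single \emph{infinite} maximal self-admissible extension is not immediate at this stage of the paper (it is justified only later, in Lemma~\ref{l2}); the paper's formulation, via the finite maxima of each length $N$ and the limit of the corresponding $\beta_N$, sidesteps this; (ii) since (again by Lemma~\ref{l2}) the right endpoint $\beta_1$ is a simple Parry number with $\varepsilon(1,\beta_1)=(\varepsilon_1,\dots,\varepsilon_k+1)$, the symbol $T_{\beta_1}^{n}1$ in your displayed identity is literally $0$ when $k<n$. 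What you really use is the quantity $\beta_1^{n}\bigl(1-\phi_n(\beta_1)\bigr)$, which equals $\sum_{j\ge1}\varepsilon_{n+j}^{*}(\beta_1)\,\beta_1^{-j}\in[0,1]$; with this reading your subtraction identity is correct.

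Your derivation of the length bound is correct and is a genuinely nice addition, since neither the paper nor the statement itself indicates how to get the sharp exponent $-n+1$. The key step, bounding the double sum below by keeping only the $j=k-1$ terms and then invoking $\sum_{k=1}^{n}\varepsilon_k\beta_0^{-(k-1)}=\beta_0$, is exactly what upgrades the naive mean-value exponent $-n+2$ to $-n+1$; this works and is the right idea. So: modulo the two notational caveats above, your proposal is sound, and for the length estimate it supplies a clean argument that the paper omits.
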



Therefore, to give an accurate estimate on the length of
$I_n^P(\varepsilon_1, \ldots, \varepsilon_n)$, we are led to determine
the maximal self-admissible sequence beginning with a given
self-admissible word $\varepsilon_1, \ldots, \varepsilon_n$.

\begin{lem}\label{l2}
  Let $w=(\varepsilon_1, \ldots, \varepsilon_n)$ be self-admissible
  with $\tau(w)=k$.  Then the periodic sequences
  \[
  (\varepsilon_1, \ldots, \varepsilon_k)^{m}\varepsilon_1, \ldots, \varepsilon_{\ell}, \ {\text{with}}\ 0\le \ell<k, \ km+\ell\ge n
  \]
  are the maximal self-admissible sequences beginning with
  $\varepsilon_1, \ldots, \varepsilon_n$. Consequently, if we denote by $\beta_1$ the right
  endpoint of $I_n^P(w_1, \ldots, w_n)$, then the $\beta_1$-expansion of 1 and the infinite $\beta_1$-expansion of 1 are given respectively as
   $$
   \varepsilon(1, \beta_1)=(\varepsilon_1, \ldots, \varepsilon_k+1),\ \  \varepsilon^*(1, \beta_1)=(\varepsilon_1, \ldots, \varepsilon_k)^{\infty}.
   $$
\end{lem}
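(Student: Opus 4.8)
The plan is to prove the two assertions separately: first that the listed periodic words are self-admissible and lexicographically largest among all self-admissible words beginning with $w=(\varepsilon_1,\ldots,\varepsilon_n)$, and then to read off the expansions of $1$ at the right endpoint $\beta_1$ from Lemma~\ref{l1}. I would start by making the periodicity explicit. By the definition of the recurrence time, $\tau(w)=k$ means $\sigma^k(\varepsilon_1,\ldots,\varepsilon_n)=(\varepsilon_1,\ldots,\varepsilon_{n-k})$, that is $\varepsilon_{j+k}=\varepsilon_j$ for $1\le j\le n-k$. Hence $w$ is the length-$n$ prefix of the $k$-periodic sequence $u^\infty$ with $u=(\varepsilon_1,\ldots,\varepsilon_k)$, and each word $U_N=(\varepsilon_1,\ldots,\varepsilon_k)^m\varepsilon_1,\ldots,\varepsilon_\ell$ (with $N=km+\ell$, $0\le\ell<k$, $N\ge n$) is likewise a prefix of $u^\infty$ and therefore begins with $w$.

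For the self-admissibility of $U_N$ I would check $\sigma^jU_N\preceq(\varepsilon_1,\ldots,\varepsilon_{N-j})$ for all $1\le j<N$. Writing $j=ak+b$ with $0\le b<k$, the $k$-periodicity of $u^\infty$ reduces the comparison to the residue $b$: when $b=0$ the shifted word is again an initial segment of $u^\infty$ and equality holds, and when $1\le b<k$ the inequality \eqref{ff1} of Lemma~\ref{lem3.3}, namely $\varepsilon_{b+1},\ldots,\varepsilon_k\prec\varepsilon_1,\ldots,\varepsilon_{k-b}$, delivers $\sigma^jU_N\preceq(\varepsilon_1,\ldots,\varepsilon_{N-j})$. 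This is the routine part and follows directly from Lemma~\ref{lem3.3} together with periodicity.

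The heart of the matter, and what I expect to be the main obstacle, is maximality. Suppose some self-admissible $v=(v_1,\ldots,v_N)$ begins with $w$ and satisfies $v\succ U_N$, and let $p$ be the first index where $v$ and $U_N$ differ. Then $p>n$ and $v_p>(U_N)_p=\varepsilon_s$, where $s\in\{1,\ldots,k\}$ is fixed by $p\equiv s\pmod k$; note $p-s\ge1$ since $p>n\ge k\ge s$, and $p-s$ is a multiple of $k$. Because $v$ and $U_N$ agree up to position $p-1$ and $U_N$ is $k$-periodic, one has
\[
\sigma^{p-s}v=(\varepsilon_1,\ldots,\varepsilon_{s-1},v_p,\ldots),\qquad v_p>\varepsilon_s,
\]
whereas the length-$(N-(p-s))$ prefix of $v$ begins with $\varepsilon_1,\ldots,\varepsilon_s$. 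Thus $\sigma^{p-s}v$ is strictly larger than that prefix, contradicting the self-admissibility of $v$. The points I would be most careful about are that the shift amount $p-s$ is a positive multiple of $k$ and that the discrepancy at position $s$ survives even in the edge case $p=N$, where both words have length exactly $s$.

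Finally, for the ``Consequently'' clause I would invoke Lemma~\ref{l1}: the right endpoint $\beta_1$ is the limit of the roots $\beta_N$ of $1=\sum_{j=1}^N(U_N)_j\beta^{-j}$, and since $U_N\to u^\infty$ coordinatewise as $N\to\infty$, letting $N\to\infty$ gives $1=\sum_{j\ge1}\delta_j\beta_1^{-j}$ with $\delta=u^\infty=(\varepsilon_1,\ldots,\varepsilon_k)^\infty$. The infinite version of the comparison above shows $\sigma^ju^\infty\preceq u^\infty$ for all $j\ge1$, so $u^\infty$ is exactly the infinite $\beta_1$-expansion of $1$, i.e.\ $\varepsilon^*(1,\beta_1)=(\varepsilon_1,\ldots,\varepsilon_k)^\infty$. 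This periodicity forces $\beta_1$ to be a simple Parry number, and the standard passage between the finite and infinite expansions of $1$ recalled in the preliminaries then yields $\varepsilon(1,\beta_1)=(\varepsilon_1,\ldots,\varepsilon_{k-1},\varepsilon_k+1)$.
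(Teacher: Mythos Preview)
Your proposal is correct and follows essentially the same route as the paper: self-admissibility via Lemma~\ref{lem3.3} and periodicity, maximality via shifting by the largest multiple of $k$ below the first discrepancy, and the endpoint expansions via Lemma~\ref{l1}. The only cosmetic difference is that the paper phrases the maximality step at the block level (writing the competitor as $(\varepsilon_1,\ldots,\varepsilon_k)^t w_1\ldots w_k\ldots$ with $t$ maximal and shifting by $tk$), whereas you locate the first differing digit $p$ and shift by $p-s$; these are the same shift, and your formulation has the minor advantage of treating the tail case $t=m$ uniformly rather than leaving it to ``similarly''. Your treatment of the ``Consequently'' clause is also more explicit than the paper's, which simply states it.
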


\begin{proof} 
By Lemma \ref{lem3.3},
we get for all $1\le i<k$
\begin{equation}\label{f3}
  \varepsilon_{i+1},\ldots,\varepsilon_k\prec
  \varepsilon_1,\ldots,\varepsilon_{k-i}.
\end{equation}

For each $m\in \N$ and $0\le \ell<k$ with $km+\ell\ge n$, we check that $$w_0 = (\varepsilon_1, \ldots, \varepsilon_k)^{m}\varepsilon_1, \ldots, \varepsilon_{\ell}$$
is the maximal self-admissible sequence beginning with
$\varepsilon_1, \ldots, \varepsilon_{n}$ of order $mk+\ell$.

The admissibility of $w_0$ 
follows directly from (\ref{f3}).
Now we show the maximality of
$w_0$. Let
\[
w = (\varepsilon_1, \ldots, \varepsilon_k)^tw_1, \ldots, w_k, \ldots, w_{(m-t-1)k+1}, \ldots, w_{(m-t)k}, w_{(m-t)k+1}, \ldots, w_{(m-t)k+\ell}
\]
be a self-admissible word different from $w_0$, where $t\ge 1$ is the maximal integer such that $w$ begins with $(\varepsilon_1, \ldots, \varepsilon_k)^t$.
We distinguish two cases according to $t<m$ or $t=m$. We show for the case $t<m$ only since the other case can de done similarly.

If $t<m$, then
\[
w_{1}, \ldots, w_{k}\ne \varepsilon_1, \ldots, \varepsilon_k.
\]
The self-admissibility of $w$ ensures that
\[
w_{1}, \ldots, w_{k} \preceq \varepsilon_1, \ldots,
\varepsilon_k.
\]
Hence, we arrive at
\begin{equation}
  w_{1}, \ldots, w_{k}\prec \varepsilon_1, \ldots,
  \varepsilon_k.
\end{equation}
This shows $w\prec w_0$.
%
%
%
%
\end{proof}

From the proof of Lemma \ref{l2}, we have the following corollary. \begin{cor}\label{c3}
Assume that $(\varepsilon_1, \ldots, \varepsilon_m)$ is of full recurrence time. For any $1\le \ell\le k$ and $u_{\ell}=(w_{(\ell-1)m+1},\cdots, w_{\ell m})$ with $$
\sigma^i u_{\ell}\preceq \varepsilon_1, \ldots, \varepsilon_{m-i}, \ 0\le i<m,
$$ the word $$
(\varepsilon_1, \ldots, \varepsilon_m, u_1,\ldots,u_k)
$$ is self-admissible.
\end{cor}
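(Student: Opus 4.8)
The plan is to verify the self-admissibility of $V:=(\varepsilon_1,\ldots,\varepsilon_m,u_1,\ldots,u_k)$ directly from the definition. Writing $N=(k+1)m$ for its length, I must check that $\sigma^j V\preceq(V_1,\ldots,V_{N-j})$ for every $1\le j<N$. The natural device is the block decomposition $j=am+b$ with $0\le a\le k$ and $0\le b<m$: such a shift removes the first $a$ length-$m$ blocks entirely and truncates the $(a+1)$-st block to its first $b$ symbols, so that $\sigma^j V$ begins with the partial block $\sigma^b B_a$ (of length $m-b$) followed by the full blocks $B_{a+1},\ldots,B_k$, where $B_0=(\varepsilon_1,\ldots,\varepsilon_m)$ and $B_\ell=u_\ell$ for $1\le\ell\le k$. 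The comparison word $(V_1,\ldots,V_{N-j})$ begins with the full base block $B_0$. I would organise the verification according to where the shift lands.

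First I would dispose of the shifts that fall inside the base block, i.e.\ $a=0$ and $1\le b<m$. Here the leading $m-b$ symbols of $\sigma^j V$ are $\varepsilon_{b+1},\ldots,\varepsilon_m$, to be compared with $\varepsilon_1,\ldots,\varepsilon_{m-b}$. Since $(\varepsilon_1,\ldots,\varepsilon_m)$ is of full recurrence time, Lemma~\ref{lem3.3} yields the strict inequality $\varepsilon_{b+1},\ldots,\varepsilon_m\prec\varepsilon_1,\ldots,\varepsilon_{m-b}$, which already settles the comparison in the desired direction, so these shifts cost nothing. Next, for a shift landing at, or inside, an appended block ($a\ge1$), the hypothesis on $u_a$ supplies exactly $\sigma^b u_a\preceq\varepsilon_1,\ldots,\varepsilon_{m-b}$ (the case $b=0$ being $u_a\preceq\varepsilon_1,\ldots,\varepsilon_m$), which controls the leading partial-block comparison. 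Whenever this inequality is strict the shift is again settled immediately.

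The heart of the argument, and the step I expect to be the main obstacle, is the case in which the leading partial-block comparison is an equality, so that the comparison must be propagated past the next block boundary. The difficulty is that after the first $m-b$ symbols the block grid of $\sigma^j V$ and that of the prefix $(V_1,\ldots,V_{N-j})$ are misaligned by $b$: the tail of $\sigma^j V$ reads $B_{a+1}B_{a+2}\cdots$ while the prefix continues with the leftover $\varepsilon_{m-b+1},\ldots,\varepsilon_m$ and only afterwards with $B_1B_2\cdots$. One therefore has to show that equality on a partial block cannot be sustained long enough to produce a strict increase, and the only tools forcing a decision are the strict inequalities from full recurrence time together with the per-block bounds $\sigma^i u_\ell\preceq\varepsilon_1,\ldots,\varepsilon_{m-i}$. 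I would attack this by tracking, block by block, the first position of disagreement and arguing that the full-recurrence strictness is met before any increase can occur. This bookkeeping across misaligned blocks is delicate, and I would check carefully whether the stated per-block hypotheses alone suffice to rule out an increase at a block boundary, or whether one must in addition exploit the relative order of the consecutive appended blocks $B_{a+1},B_1$ when the aligned blocks coincide with $B_0$.
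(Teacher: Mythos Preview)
Your block-by-block strategy is precisely the argument the paper has in mind: the corollary is stated without proof, as a by-product of the proof of Lemma~\ref{l2}, and that proof is nothing but the shift-and-compare scheme you outline, carried out in the periodic special case. The worry you raise in your final paragraph is, however, not a mere bookkeeping nuisance but a genuine obstruction: under the per-block hypotheses as literally stated the corollary is \emph{false}. Take $m=2$, the full-recurrence-time word $(\varepsilon_1,\varepsilon_2)=(2,1)$, and blocks $u_1=(1,2)$, $u_2=(2,1)$. Each $u_\ell$ satisfies $\sigma^i u_\ell\preceq(\varepsilon_1,\dots,\varepsilon_{m-i})$ for $i=0,1$, yet for $V=(2,1,1,2,2,1)$ one has
\[
\sigma^3V=(2,2,1)\succ(2,1,1)=(V_1,V_2,V_3),
\]
so $V$ is not self-admissible. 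The ``misaligned equality'' case you isolate therefore cannot be closed from the stated hypotheses, and no amount of extra bookkeeping will rescue it.

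What saves the paper is that its only invocations of the corollary (in Section~6) are in the purely periodic situation where every appended block equals $(\varepsilon_1,\dots,\varepsilon_m)$ itself, possibly truncated to a prefix at the end. There your argument goes through cleanly: a shift $j=am+b$ with $b>0$ is settled strictly by Lemma~\ref{lem3.3}, while a shift with $b=0$ produces a suffix that is literally a prefix of $V$, giving equality throughout and hence $\preceq$. Your plan is thus correct and complete for the case the paper actually needs; it is the stated generality of the corollary that is spurious, and your instinct to question whether the per-block hypotheses alone suffice was exactly right.
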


The following simple calculation will be used several times in the
sequel, so we state it in advance.
\begin{lem}
  Let $1<\beta_0<\beta_1$ and $0\le \varepsilon_n< \beta_0$ for all
  $n\ge 1$. Then for every $n\ge 1$,
  \[
  \left(\frac{\varepsilon_1}{\beta_0}+\cdots+\frac{\varepsilon_n}{\beta_0^n}\right)-
  \left(\frac{\varepsilon_1}{\beta_1}+\cdots+\frac{\varepsilon_n}{\beta_1^n}\right)\le
  \frac{\beta_0}{(\beta_0-1)^2}(\beta_1-\beta_0).
  \]
\end{lem}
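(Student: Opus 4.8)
The plan is to treat the left-hand side as a sum of $n$ terms, estimate each term via the mean value theorem, and then sum a geometric-type series. First I would write the difference as
\[
\sum_{k=1}^n \varepsilon_k\left(\frac{1}{\beta_0^k}-\frac{1}{\beta_1^k}\right),
\]
which is non-negative because the map $\beta\mapsto\beta^{-k}$ is decreasing and $\varepsilon_k\ge 0$, so that the stated inequality is an honest upper bound on a non-negative quantity.

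Next I would apply the mean value theorem to $\beta\mapsto\beta^{-k}$ on the interval $[\beta_0,\beta_1]$, producing for each $k$ a point $\xi_k\in(\beta_0,\beta_1)$ with
\[
\frac{1}{\beta_0^k}-\frac{1}{\beta_1^k}=\frac{k}{\xi_k^{\,k+1}}\,(\beta_1-\beta_0).
\]
Since $\xi_k>\beta_0>1$ we have $\xi_k^{-(k+1)}<\beta_0^{-(k+1)}$, and the hypothesis $\varepsilon_k<\beta_0$ gives $\varepsilon_k\,\beta_0^{-(k+1)}<\beta_0^{-k}$. Combining these two observations bounds each summand by
\[
\varepsilon_k\left(\frac{1}{\beta_0^k}-\frac{1}{\beta_1^k}\right)
<\frac{k}{\beta_0^{k}}\,(\beta_1-\beta_0).
\]

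Finally I would sum over $1\le k\le n$, enlarge the finite sum to the full series, and evaluate $\sum_{k\ge1}k\,\beta_0^{-k}$. Using the standard identity $\sum_{k\ge1}kx^k=x/(1-x)^2$ with $x=\beta_0^{-1}\in(0,1)$ gives $\sum_{k\ge1}k\,\beta_0^{-k}=\beta_0/(\beta_0-1)^2$, whence the left-hand side is at most $\beta_0(\beta_0-1)^{-2}(\beta_1-\beta_0)$, as claimed.

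As for the potential obstacle: this is essentially a routine calculus estimate, and the only point requiring care is the bookkeeping of inequality directions — specifically, that $\xi_k>\beta_0$ forces the \emph{helpful} bound $\xi_k^{-(k+1)}<\beta_0^{-(k+1)}$, and that the hypothesis $\varepsilon_k<\beta_0$ is spent cancelling exactly one power of $\beta_0$ so that the surviving series is the geometric derivative $\sum_k k\,\beta_0^{-k}$ yielding the stated constant. An equivalent route applies a single global mean value theorem to $f(\beta)=\sum_{k=1}^n\varepsilon_k\beta^{-k}$ and then invokes that $\xi\mapsto\xi/(\xi-1)^2$ is decreasing for $\xi>1$; I prefer the per-term version since it produces the constant $\beta_0/(\beta_0-1)^2$ directly, without needing that monotonicity.
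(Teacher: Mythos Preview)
Your argument is correct. The paper itself does not supply a proof of this lemma: it is introduced with the phrase ``the following simple calculation will be used several times in the sequel, so we state it in advance'' and is left to the reader. Your term-by-term mean value theorem estimate, followed by the evaluation of $\sum_{k\ge 1}k\beta_0^{-k}=\beta_0/(\beta_0-1)^2$, is exactly the kind of routine verification the authors have in mind, and all the inequality directions are handled correctly.
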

Now we apply Lemma \ref{l2} to give a lower bound of the length of
$I_n^P(\varepsilon_1, \ldots, \varepsilon_n)$.

\begin{thm}\label{t1}
  Let $w = (\varepsilon_1, \ldots, \varepsilon_n)$ be self-admissible
  with $\tau(w)=k$. Let $\beta_0$ and $\beta_1$ be the left and right
  endpoints of $I_n^P(\varepsilon_1, \ldots, \varepsilon_n)$. Then we
  have
  \begin{eqnarray*}
    \big|I_n^P(\varepsilon_1, \ldots, \varepsilon_n)\big| \ge \left\{
    \begin{array}{ll}
      C\beta_1^{-n}, & \hbox{{\rm{when}} k=n;}
      \\ C\frac{1}{\beta_1^{n}}\left(\frac{\varepsilon_{t+1}}{\beta_1}
      + \cdots + \frac{\varepsilon_k+1}{\beta_1^{k-t}}\right), &
      \hbox{{\rm{otherwise}}.}
    \end{array}
    \right.
  \end{eqnarray*}
  where $C:=(\beta_0-1)^2/\beta_0$ is a constant depending on
  $\beta_0$; the integers $t$ and $\ell$ are given as $\ell k <n\le
  (\ell+1)k$ and $t=n-\ell k$.
\end{thm}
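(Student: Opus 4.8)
The plan is to compute the ``defect'' $D:=1-\sum_{j=1}^{n}\varepsilon_j\beta_1^{-j}$ exactly and then convert it into a lower bound for $\beta_1-\beta_0=|I_n^P(\varepsilon_1,\ldots,\varepsilon_n)|$ by invoking the elementary estimate stated just before the theorem. By Lemma \ref{l1}, the left endpoint satisfies $\sum_{j=1}^{n}\varepsilon_j\beta_0^{-j}=1$, and by Lemma \ref{l2} the infinite $\beta_1$-expansion of $1$ is the purely periodic sequence $(\varepsilon_1,\ldots,\varepsilon_k)^{\infty}$, whence $\sum_{j=1}^{\infty}\delta_j\beta_1^{-j}=1$ with $(\delta_j)_{j\ge1}=(\varepsilon_1,\ldots,\varepsilon_k)^{\infty}$. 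Since $\tau(w)=k$ forces $\varepsilon_{k+i}=\varepsilon_i$ for $1\le i\le n-k$, the first $n$ terms of $(\delta_j)$ are exactly $\varepsilon_1,\ldots,\varepsilon_n$, so that
\[
D=1-\sum_{j=1}^{n}\frac{\varepsilon_j}{\beta_1^{j}}=\sum_{j=n+1}^{\infty}\frac{\delta_j}{\beta_1^{j}}.
\]

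First I would evaluate this tail by bookkeeping the periodic digits. Write $n=\ell k+t$ with $0<t\le k$. In the case $k<n$ the digits occupying positions $n+1,\ldots,(\ell+1)k$ are $\varepsilon_{t+1},\ldots,\varepsilon_k$, while the block beginning at position $(\ell+1)k+1$ is a fresh full period; since a full period read from its start sums to $1$, the tail splits as
\[
\sum_{j=n+1}^{\infty}\frac{\delta_j}{\beta_1^{j}}
=\frac{1}{\beta_1^{n}}\sum_{i=t+1}^{k}\frac{\varepsilon_i}{\beta_1^{i-t}}+\frac{1}{\beta_1^{(\ell+1)k}}.
\]
Because $(\ell+1)k=n+(k-t)$, the last term equals $\beta_1^{-n}\beta_1^{-(k-t)}$ and merges with the sum to upgrade the final coefficient $\varepsilon_k$ to $\varepsilon_k+1$, giving the exact identity
\[
D=\frac{1}{\beta_1^{n}}\left(\frac{\varepsilon_{t+1}}{\beta_1}+\cdots+\frac{\varepsilon_k+1}{\beta_1^{k-t}}\right).
\]
In the remaining case $k=n$ one has $t=k$, the whole tail is a single fresh period, and $D=\beta_1^{-n}$.

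It then remains to pass from $D$ to $\beta_1-\beta_0$. Since $\beta_0$ solves $\sum_{j=1}^{n}\varepsilon_j\beta_0^{-j}=1$, we have $D=\sum_{j=1}^{n}\varepsilon_j\beta_0^{-j}-\sum_{j=1}^{n}\varepsilon_j\beta_1^{-j}$, and as each $\varepsilon_j$ is a digit of the $\beta_0$-expansion of $1$ (so $0\le\varepsilon_j<\beta_0$ by self-admissibility) the elementary estimate preceding the theorem bounds this difference above by $\frac{\beta_0}{(\beta_0-1)^2}(\beta_1-\beta_0)$. Rearranging yields
\[
\beta_1-\beta_0\ \ge\ \frac{(\beta_0-1)^2}{\beta_0}\,D\ =\ C\,D,
\]
and inserting the two evaluations of $D$ together with $|I_n^P(\varepsilon_1,\ldots,\varepsilon_n)|=\beta_1-\beta_0$ from Lemma \ref{l1} gives both stated inequalities. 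I expect the only delicate point to be the index bookkeeping in the tail sum --- specifically verifying that the residual full period starting at position $(\ell+1)k$ is exactly what converts $\varepsilon_k$ into $\varepsilon_k+1$ in the penultimate displayed identity; once the exact formula for $D$ is in hand, the conversion to the length is immediate from the elementary estimate.
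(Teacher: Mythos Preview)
Your proof is correct and follows essentially the same approach as the paper: compute the defect $D=1-\sum_{j=1}^{n}\varepsilon_j\beta_1^{-j}$ and then convert it to a lower bound for $\beta_1-\beta_0$ via the elementary estimate preceding the theorem. The only cosmetic difference is that the paper writes down the finite equation for $\beta_1$ directly from Lemma~\ref{l2} (namely $1=\sum_{j=1}^{n}\varepsilon_j\beta_1^{-j}+\varepsilon_{t+1}\beta_1^{-(n+1)}+\cdots+(\varepsilon_k+1)\beta_1^{-(\ell+1)k}$), whereas you derive the same identity by summing the periodic tail of the infinite $\beta_1$-expansion.
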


\begin{proof}
  When $k=n$, the endpoints $\beta_0$ and $\beta_1$ of
  $I_n^P(\varepsilon_1, \ldots, \varepsilon_n)$ are given respectively
  as the solutions to
  \[
  1=\frac{\varepsilon_1}{\beta_0} + \cdots +
  \frac{\varepsilon_n}{\beta_0^n},
  \ {\text{and}}\ 1=\frac{\varepsilon_1}{\beta_1} + \cdots +
  \frac{\varepsilon_n+1}{\beta_1^n}.
  \]
  Thus,
  \begin{eqnarray*}
    &&\frac{1}{\beta_1^{n}} = \left(\frac{\varepsilon_1}{\beta_0} +
    \cdots + \frac{\varepsilon_n}{\beta_0^n}\right)-
    \left(\frac{\varepsilon_1}{\beta_1}+\cdots+\frac{\varepsilon_n}{\beta_1^n}\right)\le
    C^{-1}(\beta_1-\beta_0).
  \end{eqnarray*}
  Thus $|I_n^P(\varepsilon_1,\dots,\varepsilon_n)|=\beta_1 - \beta_0\ge C\beta_1^{-n}$.


  When $k<n$, the endpoints $\beta_0$ and $\beta_1$ of
  $I_n^P(\varepsilon_1, \ldots, \varepsilon_n)$ are given respectively
  as the solutions to
  \[
  1 = \frac{\varepsilon_1}{\beta_0} + \cdots +
  \frac{\varepsilon_n}{\beta_0^n}, \quad {\text{and}} \quad 1 =
  \frac{\varepsilon_1}{\beta_1} + \cdots +
  \frac{\varepsilon_n}{\beta_1^n} +
  \frac{\varepsilon_{t+1}}{\beta_1^{n+1}} + \cdots +
  \frac{\varepsilon_k+1}{\beta_1^{(\ell+1)k}}.
  \]
  Thus,
  \begin{equation*}
    \frac{\varepsilon_{t+1}}{\beta_1^{n+1}} + \cdots +
    \frac{\varepsilon_k+1}{\beta_1^{(\ell+1)k}} =\left(\frac{\varepsilon_1}{\beta_0}
    + \cdots +
    \frac{\varepsilon_n}{\beta_0^n}\right)-\left(\frac{\varepsilon_1}{\beta_1}
    +\cdots+\frac{\varepsilon_n}{\beta_1^n}\right)\le
    C^{-1}(\beta_1-\beta_0),
  \end{equation*}
  and we obtain the desired result.
\end{proof}

Combining Lemma \ref{l1} and Theorem \ref{t1}, we know that when
$(\varepsilon_1, \ldots, \varepsilon_n)$ is of full recurrence time, the
length of $I_n^P(\varepsilon_1, \ldots, \varepsilon_n)$
satisfies
\[
C\beta_1^{-n}\le |I_n^P(\varepsilon_1, \ldots, \varepsilon_n)| \le
\beta_1^{-n}.
\]
In this case, $I_n^P(\varepsilon_1, \ldots, \varepsilon_n)$ is called a
regular cylinder.

\begin{rem}\label{r5}
  From Theorem \ref{t1}, if the digit $1$ appears regularly (i.e.\ the
  gap between two digits 1 is bounded) in a self-admissible sequence
  $w$, then we can have a good lower bound for the length of the
  cylinder generated by $w$. This will be applied in constructing a
  Cantor subset of $E(\{\ell_n\}_{n\ge 1}, x_0)$.
\end{rem}

\subsection{Distribution of regular cylinders}

The following result presents a relationship between the recurrence
time of two consecutive cylinders in the parameter
space.
\begin{pro}\label{l4}
  Let $w_1,w_2$ be two self-admissible words of length $n$. Assume
  that $w_2\prec w_1$ and $w_2$ is next to $w_1$ in the lexicographic
  order. If $\tau(w_1)<n$, then
  \[
  \tau(w_2)>\tau(w_1).
  \]
\end{pro}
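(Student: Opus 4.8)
The plan is to identify the immediate predecessor $w_2$ explicitly and then read off its recurrence time. Write $w_1=(\varepsilon_1,\dots,\varepsilon_n)$ with $\tau(w_1)=k<n$, so that $w_1$ is periodic with period $k$, i.e.\ $\varepsilon_{i+k}=\varepsilon_i$ for $1\le i\le n-k$. Since $w_1$ possesses a lexicographic predecessor among self-admissible words, it is not the all-zero word, and self-admissibility then forces $\varepsilon_1\ge 1$. Let $j^*$ denote the largest index with $\varepsilon_{j^*}\ge 1$, so that $\varepsilon_{j^*+1}=\dots=\varepsilon_n=0$.

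First I would pin down $w_2$. Decreasing the last digit of a self-admissible word keeps it self-admissible, because each required relation $\sigma^i v\preceq(\varepsilon_1,\dots,\varepsilon_{j^*-i})$ only becomes easier to satisfy; hence $v:=(\varepsilon_1,\dots,\varepsilon_{j^*-1},\varepsilon_{j^*}-1)$ is self-admissible. As the digits of $w_1$ after position $j^*$ vanish, $j^*$ is the last position at which $w_1$ can be strictly lowered, so the largest self-admissible word of length $n$ below $w_1$ must agree with $w_1$ up to position $j^*-1$, carry $\varepsilon_{j^*}-1$ at position $j^*$, and be completed maximally thereafter. By Lemma~\ref{l2} the maximal self-admissible continuation of $v$ is the periodic repetition of the first $\tau(v)$ digits of $v$; a short ``nothing strictly in between'' comparison then confirms that this word is exactly the given predecessor $w_2$.

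Next I compute $\tau(w_2)$. The two length-$j^*$ words $v=(\varepsilon_1,\dots,\varepsilon_{j^*-1},\varepsilon_{j^*}-1)$ and $(\varepsilon_1,\dots,\varepsilon_{j^*})$ are both self-admissible and differ only in their last digit, with $\varepsilon_{j^*}-1<\varepsilon_{j^*}$; Lemma~\ref{l3} therefore yields $\tau(v)=j^*$, i.e.\ $v$ has full recurrence time. Consequently the maximal continuation supplied by Lemma~\ref{l2} is simply the length-$n$ prefix of $v^\infty$, so $w_2$ is periodic with period $j^*$. This period is minimal, since a strictly smaller period of $w_2$ would restrict to the prefix $v$ and contradict $\tau(v)=j^*$. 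Hence $\tau(w_2)=j^*$.

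It remains to prove $j^*>k$, which finishes the argument. Suppose instead $j^*\le k$. Then $j^*<k+1\le n$, so $\varepsilon_{k+1}=0$; but the period-$k$ relation gives $\varepsilon_{k+1}=\varepsilon_1$, forcing $\varepsilon_1=0$ and contradicting $\varepsilon_1\ge 1$. Therefore $j^*>k$, and combined with $\tau(w_2)=j^*$ this yields $\tau(w_2)=j^*>k=\tau(w_1)$. I expect the main obstacle to be the first step, namely correctly identifying $w_2$ as the periodic extension of $v$ and justifying its maximality among self-admissible words below $w_1$; once the form of $w_2$ is secured, Lemmas~\ref{l3} and~\ref{l2} make the recurrence-time computation immediate and the inequality $j^*>k$ is a one-line periodicity argument.
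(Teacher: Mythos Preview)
Your proof is correct, but it takes a different route from the paper's. The paper never identifies $w_2$ explicitly. Writing $w_1=(\varepsilon_1,\dots,\varepsilon_{k_1})^t(\varepsilon_1,\dots,\varepsilon_\ell)$ with $1\le\ell\le k_1$, it simply notes that the self-admissible word $w=(\varepsilon_1,\dots,\varepsilon_{k_1})^t\,0^\ell$ lies strictly below $w_1$, so the sandwich $w\preceq w_2\prec w_1$ forces $w_2$ to share the prefix $(\varepsilon_1,\dots,\varepsilon_{k_1})^t$ with $w_1$. From there two one-line case checks finish: $\tau(w_2)=k_1$ would force $w_2=w_1$, while $\tau(w_2)<k_1$ would, via Lemma~\ref{lem3.3} applied to $w_1$, contradict $\tau(w_1)=k_1$. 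Only Lemma~\ref{lem3.3} is invoked.

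Your argument instead pins down $w_2$ completely as the length-$n$ prefix of $v^\infty$ with $v=(\varepsilon_1,\dots,\varepsilon_{j^*-1},\varepsilon_{j^*}-1)$, and then computes $\tau(w_2)=j^*$ exactly via Lemmas~\ref{l3} and~\ref{l2}. This is more work (the ``nothing strictly in between'' verification and the appeal to two auxiliary lemmas rather than one), but it yields strictly more: you obtain the precise value $\tau(w_2)=j^*$, the position of the last nonzero digit of $w_1$, rather than merely the inequality $\tau(w_2)>\tau(w_1)$. The paper's approach is more economical for the purpose at hand; yours gives a sharper structural description of how recurrence times jump along the chain of consecutive self-admissible words.
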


\begin{proof}
Since $\tau(w_1):=k_1<n$, $w_1$ can be written as
\[
w_1 = (\varepsilon_1, \ldots, \varepsilon_{k_1})^{t}, \varepsilon_1,
\ldots, \varepsilon_{\ell}, \ \ {\text{for some integers }}\ t\ge
1\ {\text{and}}\ 1\le \ell\le k_1.
\]
It is clear that $\varepsilon_1\ge 1$ which ensures the
self-admissibility of the sequence
\[
w = (\varepsilon_1, \ldots, \varepsilon_{k_1})^{t}, \underbrace{0,
  \ldots, 0}_{\ell}.
\]
Since $w_2$ is less than $w_1$ and is next to $w_1$, we have
\[
w\preceq w_2\prec w_1.
\]
This implies that $w_1$ and $w_2$ have common prefixes up to at least $k_1
\cdot t$ terms. Then $w_2$ can be expressed as
\[
w_2 = (\varepsilon_1, \ldots, \varepsilon_{k_1})^{t}, \varepsilon'_1,
\ldots, \varepsilon'_{\ell}.
\]

First, we claim that $\tau(w_2):=k_2\ne k_1$. Otherwise, by the
definition of $\tau(w_2)$, we obtain
\[
\varepsilon'_1, \ldots, \varepsilon'_{\ell} = \varepsilon_1, \ldots,
\varepsilon_{\ell},
\]
which indicates that $w_1=w_2$.

Second, we show that $k_2$ cannot be strictly smaller than
$k_1$. Otherwise, consider the prefix $\varepsilon_1, \ldots,
\varepsilon_{k_1}$ which is also the prefix of $w_1$. If $k_2<k_1$, we
have
\[
\varepsilon_{k_2+1}, \ldots, \varepsilon_{k_1} = \varepsilon_1,
\ldots, \varepsilon_{k_1-k_2},
\]
which contradicts Lemma \ref{lem3.3} by applying to $w_1$.

Therefore, $\tau(w_2)>\tau(w_1)$ holds.
\end{proof}

The following corollary indicates that cylinders with regular length (equivalent
with $\beta_1^{-n}$) are well distributed among the parameter space. This result
was found for the first time by Persson and Schmeling \cite{PerS}.

\begin{cor}\label{c1}
  Among any $n$ consecutive cylinders in $C_n^P$, there is at least
  one with full recurrence time, hence with regular length.
\end{cor}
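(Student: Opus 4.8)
The plan is to run a monotonicity-plus-pigeonhole argument driven by Proposition~\ref{l4}. First I would pin down the correspondence between cylinders and words: by Definition~\ref{cylinderpar} each cylinder in $C_n^P$ is determined by a self-admissible word of length $n$, and by the monotonicity of $\beta \mapsto \varepsilon^*(1,\beta)$ (Theorem~\ref{t4}(2)) together with the interval description in Lemma~\ref{l1}, ordering the cylinders along the $\beta$-axis coincides with ordering the corresponding self-admissible words lexicographically. Thus $n$ consecutive cylinders correspond to $n$ self-admissible words $w^{(1)}\succ w^{(2)}\succ\cdots\succ w^{(n)}$ that are consecutive in the lexicographic order, each $w^{(j+1)}$ being the immediate lexicographic predecessor of $w^{(j)}$.

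The core step is then an argument by contradiction. Suppose none of these words has full recurrence time, i.e.\ $\tau(w^{(j)})<n$ for every $j$. Since $w^{(j+1)}\prec w^{(j)}$ is next to $w^{(j)}$ and $\tau(w^{(j)})<n$, Proposition~\ref{l4} applies and gives $\tau(w^{(j+1)})>\tau(w^{(j)})$. Iterating this for $j=1,\ldots,n-1$ yields a strictly increasing chain
\[
\tau(w^{(1)})<\tau(w^{(2)})<\cdots<\tau(w^{(n)}).
\]
These are $n$ strictly increasing positive integers, so $\tau(w^{(n)})\ge \tau(w^{(1)})+(n-1)\ge n$, contradicting $\tau(w^{(n)})<n$. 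Hence at least one $w^{(j)}$ must satisfy $\tau(w^{(j)})=n$, that is, it has full recurrence time.

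Finally, I would invoke the regularity statement already recorded after Theorem~\ref{t1}: combining the upper bound of Lemma~\ref{l1} with the lower bound of Theorem~\ref{t1} in the case $k=n$, a word of full recurrence time satisfies $C\beta_1^{-n}\le |I_n^P|\le \beta_1^{-n}$, so its cylinder has regular length. This gives the corollary.

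As for the main obstacle, the argument is short and its genuine content has already been isolated into Proposition~\ref{l4}; the only points requiring care are the bookkeeping that ``consecutive cylinders'' really do translate into lexicographically consecutive self-admissible words (so that the hypotheses $w_2\prec w_1$ and ``$w_2$ next to $w_1$'' of Proposition~\ref{l4} hold at each step), and the observation that the hypothesis $\tau(w^{(j)})<n$ needed to invoke that proposition is exactly the negation of ``full recurrence time.'' Once these are pinned down, the pigeonhole on the strictly increasing chain of recurrence times closes the proof.
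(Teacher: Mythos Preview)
Your proof is correct and follows essentially the same approach as the paper: assume no word has full recurrence time, apply Proposition~\ref{l4} repeatedly to get a strictly increasing chain $\tau(w^{(1)})<\cdots<\tau(w^{(n)})$ of integers all lying in $\{1,\ldots,n-1\}$, and derive a contradiction by pigeonhole. The only cosmetic difference is that you spell out the correspondence between consecutive cylinders and lexicographically consecutive self-admissible words more carefully than the paper does.
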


\begin{proof}
  Let $w_1\succ w_2\succ \cdots\succ w_n$ be $n$ consecutive cylinders
  in $C_n^P$. By Theorem~\ref{t1}, it suffices to show that there is
  at least one cylinder $w$ whose recurrence time is equal to $n$. If
  this is not the case, then by Proposition~\ref{l4}, we have
  \[
  1\le \tau(w_1)<\tau(w_2)<\cdots<\tau(w_n)<n,
  \]
  i.e.\ there would be $n$ different integers in $\{1, 2, \ldots,
  n-1\}$.  This is impossible. Thus we complete the proof.
\end{proof}

\section{Proof of Theorem \ref{maintheorem}: upper bound}
The upper bound of $\dim_\textsf{H}E(\{\ell_n\}_{n\ge 1}, x_0)$ is
given in a unified way no matter wheather $x_0=1$ or not.  Before
providing a upper bound of $\dim_\textsf{H}E(\{\ell_n\}_{n\ge 1},
x_0)$, we begin with a lemma.


\begin{lem}\label{lll2}
  Let $(\varepsilon_1,\cdots, \varepsilon_n)$ be self-admissible. Then the set
  \begin{equation}\label{f8}
    \Big\{\, T^n_{\beta}1: \beta\in I_n^P(\varepsilon_1, \ldots,
    \varepsilon_n) \,\Big\}
  \end{equation}
  is a half-open interval $[0,a)$ for some $a\le 1$. Moreover,
    $T^n_{\beta}1$ is continuous and increasing on $\beta\in
    I_n^P(\varepsilon_1, \ldots, \varepsilon_n)$.
\end{lem}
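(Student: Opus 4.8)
The plan is to realise $T^n_\beta 1$ as an explicit polynomial in $\beta$ on the cylinder and then read off continuity, monotonicity, and the shape of the image directly from this formula.

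First I would record the basic identity obtained by iterating the algorithm \eqref{f2} applied to the number $1$: for any $\beta>1$,
\[
1 = \frac{\varepsilon_1(1,\beta)}{\beta} + \cdots + \frac{\varepsilon_n(1,\beta)}{\beta^n} + \frac{T^n_\beta 1}{\beta^n},
\]
so that $T^n_\beta 1 = \beta^n - \varepsilon_1(1,\beta)\beta^{n-1} - \cdots - \varepsilon_n(1,\beta)$. The role of restricting to $\beta\in I_n^P(\varepsilon_1,\ldots,\varepsilon_n)$ is precisely that the first $n$ digits are frozen, $\varepsilon_i(1,\beta)=\varepsilon_i$ for $1\le i\le n$; hence on the whole cylinder
\[
T^n_\beta 1 = f(\beta):=\beta^n\Big(1-\frac{\varepsilon_1}{\beta}-\cdots-\frac{\varepsilon_n}{\beta^n}\Big),
\]
a genuine polynomial in $\beta$ with fixed coefficients. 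Continuity of $\beta\mapsto T^n_\beta 1$ on the cylinder is then immediate.

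The hard part will be strict monotonicity, the only step that needs an idea. Writing $g(\beta)=1-\sum_{i=1}^n\varepsilon_i\beta^{-i}$ so that $f(\beta)=\beta^n g(\beta)$, I observe that $g(\beta)=\beta^{-n}T^n_\beta 1\ge 0$ on the cylinder because the $\beta$-transformation maps into $[0,1)$, giving $T^n_\beta 1\in[0,1)$ for every $\beta$. Differentiating,
\[
f'(\beta)=n\beta^{n-1}g(\beta)+\beta^n g'(\beta)\ge \beta^n g'(\beta)=\beta^n\sum_{i=1}^n i\varepsilon_i\beta^{-i-1}>0,
\]
where the last strict inequality uses $\varepsilon_1=\lfloor\beta\rfloor\ge 1$ since $\beta>1$. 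Thus $f$ is strictly increasing on $I_n^P(\varepsilon_1,\ldots,\varepsilon_n)$.

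Finally I would identify the image. By Lemma~\ref{l1} the cylinder is a half-open interval $[\beta_0,\beta_1)$, and its left endpoint $\beta_0$ solves $1=\sum_{i=1}^n\varepsilon_i\beta_0^{-i}$, i.e.\ $g(\beta_0)=0$, whence $f(\beta_0)=T^n_{\beta_0}1=0$. Since $f$ is continuous and strictly increasing on the half-open interval $[\beta_0,\beta_1)$ with $f(\beta_0)=0$, the intermediate value theorem gives that its image is exactly $[0,a)$ with $a:=\lim_{\beta\to\beta_1^-}f(\beta)$; the value $a$ is not attained because $\beta_1$ lies outside the domain and $f$ is strictly increasing, so the image must indeed be a half-open interval. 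The bound $a\le 1$ follows at once from $T^n_\beta 1<1$ for all $\beta$. The only genuine obstacle is the monotonicity inequality above; the rest is bookkeeping once the polynomial formula and the inclusion $T^n_\beta 1\in[0,1)$ are available.
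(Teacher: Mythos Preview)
Your proof is correct and follows essentially the same route as the paper: express $T^n_\beta 1$ as the polynomial $f(\beta)=\beta^n-\sum_{i=1}^n\varepsilon_i\beta^{n-i}$ on the cylinder, show $f'(\beta)>0$, and deduce the shape of the image from $f(\beta_0)=0$ at the left endpoint. The only difference is cosmetic, in the monotonicity step: the paper computes $f'(\beta)=n\beta^{n-1}-\sum_{i=1}^{n-1}(n-i)\varepsilon_i\beta^{n-i-1}\ge n\beta^{n-1}-(n-1)\beta^{n-1}>0$ using $\sum\varepsilon_i\beta^{-i}\le 1$ directly, whereas you factor $f=\beta^n g$ and use $g\ge 0$ together with $g'>0$; both arguments rest on the same inequality $T^n_\beta 1\ge 0$ and the fact that $\varepsilon_1\ge 1$.
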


\begin{proof}
  Note that for any $\beta\in I_n^P (\varepsilon_1, \ldots,
  \varepsilon_n)$, we have
  \[
  1=\frac{\varepsilon_1}{\beta}+\cdots+\frac{\varepsilon_n+T^n_{\beta}1}{\beta^n}.
  \]
  Thus
  \[
  T^n_{\beta}1 = \beta^n - \beta^n
  \left(\frac{\varepsilon_1}{\beta}+\cdots+\frac{\varepsilon_n}{\beta^n}\right).
  \]
  Denote
  \begin{equation}
    f(\beta)=\beta^n-\Big(\varepsilon_1 \beta^{n-1}+\varepsilon_2
    \beta^{n-2}+\cdots+\varepsilon_n\Big).
  \end{equation}
  Then the set in \eqref{f8} is just
  \[
  \{\, f(\beta): \beta\in I_n^P (\varepsilon_1, \ldots, \varepsilon_n)
  \,\}.
  \]

  For the monotonicity of $T^n_{\beta}1$ on $\beta$, it suffices to
  show that the derivative $f'(\beta)$ is positive. In
  fact,
  \begin{eqnarray*}
    &&f'(\beta) = n\beta^{n-1}-\Big( (n-1)\varepsilon_1\beta^{n-2} +
    (n-2) \varepsilon_2\beta^{n-3} + \cdots+\varepsilon_{n-1}
    \Big)\\ &&\ \qquad \ge
    n\beta^{n-1}-(n-1)\beta^{n-1}\left(\frac{\varepsilon_1}{\beta}+
    \cdots+\frac{\varepsilon_{n-1}}{\beta^{n-1}}\right)\\ &&\ \qquad
    \ge n\beta^{n-1}-(n-1)\beta^{n-1} >0.
  \end{eqnarray*}

  Since $f$ is continuous and $I_n^P (\varepsilon_1, \ldots,
  \varepsilon_n)$ is an interval with the left endpoint $\beta_0$
  given as the solution to the equation
  \[
  1=\frac{\varepsilon_1}{\beta}+\cdots+\frac{\varepsilon_n}{\beta^n},
  \]
  the set \eqref{f8} is an interval with $0$ being its left endpoint
  and some right endpoint $a\le 1$.
\end{proof}

Now we estimate the upper bound of $\dim_\textsf{H} E
\big(\{\ell_n\}_{n\ge 1}, x_0\big)$.  For any $1<\beta_0<\beta_1$,
denote
\[
E(\beta_0,\beta_1) = \big\{\, \beta_0<\beta\le \beta_1:
|T^n_{\beta}1-x_0|<\beta^{-\ell_n}, \ {\text{i.o.}}\ n\in \mathbb{N}
\,\big\}.
\]
For any $\delta>0$, we partition the parameter space $(1,\infty)$ into
$\{(a_i, a_{i+1}]: i\ge 1\}$ with $\frac{\log a_{i+1}}{\log a_i} <
  1+\delta$ for all $i\ge 1$. Then
\[
E\big(\{\ell_n\}_{n\ge 1},
  x_0\big)=\cup_{i=1}^{\infty}E(a_i,a_{i+1}).
\]
By the $\sigma$-stability of the Hausdorff dimension, it suffices to
give a upper bound estimate on $ \dim_\textsf{H}E (\beta_0,\beta_1)$
for any $1<\beta_0<\beta_1$ with $\frac{\log \beta_1}{\log \beta_0} <
1+\delta$.
\begin{pro} \label{pro:upperbound}
  For any $1<\beta_0<\beta_1$, we have
  \begin{equation}\label{upperbound}
    \dim_\textsf{H}E(\beta_0,\beta_1)\le \frac{1}{1+\alpha}\frac{\log
      \beta_1}{\log \beta_0}.
  \end{equation}
\end{pro}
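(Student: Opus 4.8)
The plan is to bound $\dim_\textsf{H}E(\beta_0,\beta_1)$ from above by a direct covering argument, using Lemma~\ref{lll2} to control, cylinder by cylinder, how much of the parameter interval can satisfy the defining condition $|T^n_\beta 1-x_0|<\beta^{-\ell_n}$. Fix an integer $n$ and a cylinder $I_n^P(\varepsilon_1,\ldots,\varepsilon_n)$ meeting $(\beta_0,\beta_1]$. On this cylinder the map $\beta\mapsto T^n_\beta 1=f(\beta)$ is increasing, and the derivative estimate from the proof of Lemma~\ref{lll2} gives $f'(\beta)\ge\beta^{n-1}\ge\beta_0^{\,n-1}$. Since every $\beta$ in the cylinder satisfies $\beta\ge\beta_0$, the radius obeys $\beta^{-\ell_n}\le\beta_0^{-\ell_n}$, so the set of $\beta$ in the cylinder fulfilling the condition is contained in the $f$-preimage of an interval of length $2\beta_0^{-\ell_n}$. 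By monotonicity together with the lower bound on $f'$, this preimage is a subinterval of the cylinder of length at most $2\beta_0^{-\ell_n}/\beta_0^{\,n-1}=2\beta_0^{\,1-n-\ell_n}$.

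First I would count the competing cylinders. Any order-$n$ cylinder meeting $(\beta_0,\beta_1]$ arises from the length-$n$ prefix of the expansion of $1$ for some $\beta\le\beta_1$, hence (by the monotonicity in Theorem~\ref{t4}(2)) from a word in $\Sigma_{\beta_1}^n$; R\'enyi's bound (Theorem~\ref{Re}) then gives at most $\beta_1^{\,n+1}/(\beta_1-1)$ such cylinders. Because a point of $E(\beta_0,\beta_1)$ satisfies the condition for infinitely many $n$, the collection of the condition-sets described above, taken over all cylinders of order $n\ge N$, covers $E(\beta_0,\beta_1)$ for every $N$. For a given exponent $s>0$, the associated $s$-dimensional Hausdorff sum of this cover is therefore at most
\[
\sum_{n\ge N}\frac{\beta_1^{\,n+1}}{\beta_1-1}\bigl(2\beta_0^{\,1-n-\ell_n}\bigr)^{s}
\le C\sum_{n\ge N}\beta_1^{\,n}\,\beta_0^{-s(n+\ell_n)}
=C\sum_{n\ge N}\exp\!\Bigl(n\log\beta_1-s(n+\ell_n)\log\beta_0\Bigr),
\]
where $C=C(s,\beta_0,\beta_1)$ is a constant.

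The crucial point, and where the value $\alpha=\liminf_n\ell_n/n$ enters, is the convergence of this tail as $N\to\infty$. By the definition of $\liminf$, for any $\epsilon>0$ one has $\ell_n>(\alpha-\epsilon)n$ for all large $n$, so the exponent is bounded above by $n\bigl[\log\beta_1-s(1+\alpha-\epsilon)\log\beta_0\bigr]$. Hence, whenever $s>\frac{1}{1+\alpha-\epsilon}\frac{\log\beta_1}{\log\beta_0}$ the exponent is $\le-cn$ for some $c>0$ and all large $n$, the series converges, and the tail tends to $0$; consequently $\mathcal{H}^s\bigl(E(\beta_0,\beta_1)\bigr)=0$. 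Letting $\epsilon\to0$ yields $\dim_\textsf{H}E(\beta_0,\beta_1)\le\frac{1}{1+\alpha}\frac{\log\beta_1}{\log\beta_0}$, as claimed. I expect the main obstacle to be the first step, namely pinning down the length of the condition-set inside a single cylinder: this requires both the monotonicity and the uniform lower bound $f'\ge\beta_0^{\,n-1}$ from Lemma~\ref{lll2}, together with the care that the shrinking radius $\beta^{-\ell_n}$ itself depends on $\beta$ and must be enclosed by the uniform $\beta_0^{-\ell_n}$. Once that length estimate is in hand, the cylinder count and the summation are routine, and it is precisely the eventual lower bound $\ell_n\gtrsim\alpha n$ (the $\liminf$, not the $\limsup$) that drives the convergence.
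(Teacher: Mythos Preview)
Your proof is correct and follows essentially the same approach as the paper: replace $\beta^{-\ell_n}$ by $\beta_0^{-\ell_n}$, bound the length of the condition-set within each order-$n$ cylinder by $2\beta_0^{1-n-\ell_n}$, count cylinders via Theorem~\ref{t4}(2) and Theorem~\ref{Re}, and sum. The only cosmetic difference is that the paper obtains the length bound by an explicit algebraic manipulation of the defining equations for the endpoints $\beta_0',\beta_1'$ of the condition-set, whereas you invoke the derivative estimate $f'(\beta)\ge\beta^{n-1}$ extracted from the proof of Lemma~\ref{lll2}; these two computations are equivalent.
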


\begin{proof}
  Let $B(x,r)$ be a ball with center $x\in [0, 1]$ and radius $r$. By
  using a simple inclusion $B(x_0, \beta^{-\ell_n})\subset B(x_0,
  \beta_0^{-\ell_n})$ for any $\beta>\beta_0$, we have
  \begin{eqnarray*}
    E(\beta_0, \beta_1)
    &=& \bigcap_{N=1}^{\infty} \bigcup_{n=N}^{\infty} \left\{\,
    \beta\in(\beta_0, \beta_1]: T_\beta^n1\in B(x_0, \beta^{-\ell_n})
  \,\right\}\\ &\subset& \bigcap_{N=1}^{\infty}
  \bigcup_{n=N}^{\infty}\left\{\, \beta\in(\beta_0, \beta_1] :
T_\beta^n1\in B(x_0, \beta_0^{-\ell_n}) \,\right\}\\ &=&
\bigcap_{N=1}^{\infty} \bigcup_{n=N}^{\infty}\bigcup_{(i_1,\cdots,
  i_n) \in \Sigma_{\beta_0,\beta_1}^{P,n}} I_n^P(i_1, \ldots,
i_n;\beta_0^{-\ell_n}),
  \end{eqnarray*}
  where $\Sigma_{\beta_0,\beta_1}^{P,n}$ denotes the set of
  self-admissible words of length $n$ between
  $(\varepsilon_1^*(\beta_0),\allowbreak\dots,\allowbreak\varepsilon_n^*(\beta_0))$
  and $(\varepsilon_1^*(\beta_1),\dots,\varepsilon_n^*(\beta_1))$ in
  the lexicographic order, and
  \[
  I_n^P(i_1,\ldots, i_n;\beta_0^{-l_n}) := \{\beta\in(\beta_0,
  \beta_1]: \beta\in I_n^P(i_1, \ldots, i_n), T_\beta^n1\in B(x_0,
  \beta_0^{-\ell_n})\}.
  \]

  By Lemma \ref{lll2}, we know that the set $I_n^P(i_1, \ldots,
  i_n;\beta_0^{-l_n})$ is an interval. In case it is non-empty we
  denote it's left and right endpoints by $ \beta_0'$ and $\beta_1' $
  respectively. Thus
  \[
  \beta_1'\le i_1+\frac{i_2}{\beta_1'}+\cdots
  +\frac{i_n}{\beta_1'^{n-1}}+\frac{x_0+\beta_0^{-\ell_n}}{\beta_1'^{n-1}}
  \]
  and
  \[
  \beta_0'\ge i_1 + \frac{i_2}{\beta_0'} +
  \cdots+\frac{i_n}{\beta_0'^{n-1}} +
  \frac{x_0-\beta_0^{-\ell_n}}{\beta_0'^{n-1}}\ge i_1 +
  \frac{i_2}{\beta_1'}+\cdots+\frac{i_n}{\beta_1'^{n-1}} +
  \frac{x_0-\beta_0^{-\ell_n}}{\beta_1'^{n-1}}.
  \]
  Therefore,
  \begin{align*}
    &\beta_1' - \beta_0' \\ \le&
    \left(i_1+\frac{i_2}{\beta_1'}+\cdots+\frac{i_n}{\beta_1'^{n-1}}+\frac{x_0+
      \beta_0^{-\ell_n}}{\beta_1'^{n-1}}\right) -
    \left(i_1+\frac{i_2}{\beta_1'}+\cdots+\frac{i_n}{\beta_1'^{n-1}}+
    \frac{x_0-\beta_0^{-\ell_n}}{\beta_1'^{n-1}}\right)\\ =&
    \frac{2\beta_0^{-\ell_n}}{\beta_1'^{n-1}}\le
    \frac{2\beta_0^{-\ell_n}}{\beta_0^{n-1}}=2\beta_0^{-(\ell_n+n-1)}.
  \end{align*}
  By the monotonicity of $\varepsilon(1,\beta)$ with respect to
  $\beta$ (Theorem \ref{t4} (2)), for any $\beta<\beta_1$,
  $\varepsilon(1,\beta)\in \Sigma_{\beta_1}$. Therefore
  \[
  \# \Sigma_{\beta_0,\beta_1}^{P,n}\le \# \Sigma_{\beta_1}^n\le
  \frac{\beta_1^{n+1}}{\beta_1-1},
  \]
  where the last inequality follows from Theorem \ref{Re}. It is clear
  that the family
  \[
  \Big\{\, I_n^P(i_1, \ldots, i_n,\beta_0^{-\ell_n}) : (i_1, \ldots,
  i_n) \in \Sigma_{\beta_0, \beta_1}^{P,n}, n\ge N\,\Big\}
  \]
  is a cover of the set $E(\beta_0,\beta_1)$. Recall that $\alpha =
  \liminf\limits_{n\to\infty}{\ell_n/n}$. Thus for any $s>
  \frac{1}{1+\alpha} \frac{\log \beta_1}{\log \beta_0}$, we have
  \begin{align*}
    \mathcal{H}^s(E(\beta_0,\beta_1))&\le \liminf_{N\to\infty}
    \sum_{n\ge N} \sum_{(i_1, \ldots,
      i_n) \in \Sigma_{\beta_0,\beta_1}^{P,n}} \big| I_n^P (i_1, \ldots,
    i_n,\beta_0^{-\ell_n}) \big|^s \\ &\le
    \liminf_{N\to\infty} \sum_{n\ge N} \frac{\beta_1^{n+1}}{\beta_1-1}
    \cdot 2^s\cdot \beta_0^{-(\ell_n+n-1)s}<\infty.
  \end{align*}
  This gives the estimate $\eqref{upperbound}$.
\end{proof}

\section{Lower bound of $E(\{\ell_n\}_{n\ge 1}, x_0)$: $x_0\ne 1$} \label{sec:lowerbound}

The proof of the lower bound of $\dim_\textsf{H}E\big(\{\ell_n\}_{n\ge
  1}, x_0\big)$, when $x_0\neq 1$, is done using a classic method:
first construct a Cantor subset $\F$, then define a measure $\mu$
supported on $\F$, and estimate the H\"{o}lder exponent of the measure
$\mu$. At last, conclude the result by applying the following
mass distribution principle \cite[Proposition 4.4]{Fal}.
\begin{pro}[Falconer \cite{Fal}]\label{p2}
  Let $E$ be a Borel subset of $\mathbb{R}^d$ and $\mu$ be a Borel
  measure with $\mu(E)>0$. Assume that, for any $x\in E$
  \[
  \liminf_{r\to 0}\frac{\log \mu(B(x,r))}{\log r}\ge s.
  \]
  Then $\dim_\textsf{H}E\geq s.$
\end{pro}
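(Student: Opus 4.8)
The plan is to reduce the statement to a single ball estimate and then run the standard covering argument. First I would unwind the hypothesis, paying attention to the sign of $\log r$. Since $\log r<0$ for $r<1$, the condition $\liminf_{r\to 0}\frac{\log\mu(B(x,r))}{\log r}\ge s$ is equivalent to the following: for every $t<s$ and every $x\in E$ there exists a threshold $r_0(x)>0$ such that $\mu(B(x,r))\le r^{t}$ for all $0<r<r_0(x)$. Fixing an arbitrary $t<s$, it then suffices to prove $\mathcal{H}^{t}(E)>0$, since this already gives $\dim_\textsf{H}E\ge t$, and letting $t\uparrow s$ yields the desired conclusion $\dim_\textsf{H}E\ge s$.

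Next I would stratify $E$ according to the (non-uniform) radius threshold, so as to obtain a uniform estimate on a set of positive measure. Set
\[
E_k=\Big\{\,x\in E:\ \mu(B(x,r))\le r^{t}\ \text{for all}\ 0<r<\tfrac1k\,\Big\},\qquad k\ge 1.
\]
By the reformulation above, every $x\in E$ belongs to some $E_k$, so the sets $E_k$ increase to $E$. Since $\mu(E)>0$ and $E_k\uparrow E$, continuity of the (outer) measure from below gives $\mu(E_k)\to\mu(E)>0$, so we may fix an index $k$ for which $c:=\mu(E_k)>0$.

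The heart of the argument is the covering estimate. Let $\{U_i\}_i$ be any countable cover of $E_k$ by sets of diameter $|U_i|<\tfrac1k$. For each $i$ meeting $E_k$ choose a point $x_i\in U_i\cap E_k$; then $U_i$ is contained in the ball of radius $|U_i|$ centered at $x_i$, and since $|U_i|<\tfrac1k$ the defining property of $E_k$ yields $\mu(U_i)\le|U_i|^{t}$. Summing over $i$ and using countable subadditivity,
\[
c=\mu(E_k)\le\mu\Big(\bigcup_i U_i\Big)\le\sum_i\mu(U_i)\le\sum_i|U_i|^{t}.
\]
Taking the infimum over all admissible covers gives $\mathcal{H}^{t}_{1/k}(E_k)\ge c$, and because this lower bound is uniform for every mesh below $\tfrac1k$, letting the mesh tend to $0$ produces $\mathcal{H}^{t}(E_k)\ge c>0$. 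By monotonicity $\mathcal{H}^{t}(E)\ge\mathcal{H}^{t}(E_k)>0$, hence $\dim_\textsf{H}E\ge t$, and the proof is complete upon taking $t\uparrow s$.

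The only delicate points are bookkeeping. The sets $E_k$ need not be Borel, so the two steps ``$\mu(E_k)\to\mu(E)$'' and ``$\mu(\bigcup_i U_i)\ge\mu(E_k)$'' should be read in terms of the outer measure induced by $\mu$; since a Borel measure on $\mathbb{R}^d$ is Borel regular, continuity from below along increasing sequences and countable subadditivity remain valid, and alternatively one may simply pass to a Borel hull of $E$. One should also note that $U_i$ is contained in the \emph{closed} ball of radius $|U_i|$, whose measure is still dominated by $|U_i|^{t}$ on letting the radius decrease to $|U_i|$ in the defining inequality for $E_k$. I expect this measurability bookkeeping, rather than any genuinely hard estimate, to be the main thing requiring care.
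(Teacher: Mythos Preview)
Your proof is correct and is the standard argument for the mass distribution principle. Note, however, that the paper does not supply its own proof of this proposition: it is stated with a citation to Falconer's book \cite[Proposition 4.4]{Fal} and used as a black box. There is therefore nothing to compare against; your write-up is a faithful and careful version of the textbook proof, including the appropriate caveats about outer measure when the stratified sets $E_k$ fail to be Borel.
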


Instead of dealing with $E\big(\{\ell_n\}_{n\ge 1}, x_0\big)$
directly, we give some technical operation by considering the
following set
\[
E=\big\{\, \beta>1: |T^n_{\beta}1-x_0|<4(n+\ell_n)\beta^{-\ell_n},
\ {\text{i.o.}}\ n\in \mathbb{N} \,\big\}.
\]
It is clear that if we replace $\beta^{-\ell_n}$ by $\beta^{-(\ell_n+n
  \epsilon)}$ for any $\epsilon>0$ in defining $E$ above, the set $E$
will be a subset of $E(\{\ell_n\}_{n\ge 1}, x_0)$.
Therefore, once we show the dimension of $E$ is bounded from below by
$1/(1+\alpha)$, so is $E\big(\{\ell_n\}_{n\ge 1}, x_0\big)$. Secondly, we always assume in the following that $\alpha>0$, if not, just replace $\ell_n$ by $\ell_n+n\epsilon$. The left
of this section is to prove that
\[
\dim_\textsf{H}E \ge \frac{1}{1+\alpha}, \ {\text{with}\ \alpha>0}.
\]

\subsection{Cantor subset}

Let $x_0$ be a real number in $[0,1)$. Let $\beta_0>1$ be such that
  its expansion $\varepsilon(1,\beta_0)$ of 1 is infinite, i.e.\ there
  are infinitely many nonzero terms in $\varepsilon(1,\beta_0)$. This
  infinity of $\varepsilon(1,\beta_0)$ implies that for each $n\ge 1$,
  the number $\beta_0$ is not the right endpoint of the cylinder
  $I_n^P(\beta_0)$ containing $\beta_0$ by Lemma \ref{l2}. Hence we
  can choose another $\beta_1>\beta_0$ such that the
  $\beta_1$-expansion $\varepsilon(1,\beta_1)$ of 1 is infinite and
  has a sufficiently long common prefix with $\varepsilon(1,\beta_0)$
  so that
\begin{equation}\label{condtion01}
  \frac{\beta_1(\beta_1-\beta_0)}{(\beta_0 - 1)^2} \le \frac{1-x_0}{2}.
\end{equation}



Let
\[
M = \min\{\, n\ge 1: \varepsilon_n(1,\beta_0)\neq
\varepsilon_n(1,\beta_1) \,\},
\]
that is, $\varepsilon_i(1,\beta_0)=\varepsilon_i(1,\beta_1)$ for all
$1\le i<M$ and $\varepsilon_M(1,\beta_0)\neq
\varepsilon_M(1,\beta_1)$.
Let $\beta_2$ be the maximal element beginning with
$w(\beta_0):=(\varepsilon_1(1,\beta_0),\cdots,
\varepsilon_{M}(1,\beta_0))$ in its infinite expansion of $1$, that is,
$\beta_2$ is the right endpoint of $I_M^P(w(\beta_0))$. Then it
follows that $\beta_0<\beta_2<\beta_1$. Note that the
word
\[
(\varepsilon_1(1,\beta_0), \ldots, \varepsilon_{M-1}(1,\beta_0),
\allowbreak\varepsilon_{M}(1,\beta_1)) = (\varepsilon_1(1,\beta_1),
\ldots, \varepsilon_{M-1}(1,\beta_1),
\allowbreak\varepsilon_{M}(1,\beta_1))
\]
is self-admissible and
$\varepsilon_M(1,\beta_0)< \varepsilon_M(1,\beta_1)$. So by Lemma
\ref{l3}, we know that $\tau(w(\beta_0))=M$. As a result, Lemma
\ref{l2} compels that the infinite $\beta_2$-expansion of $1$
is
\begin{equation}\label{fff6}
  \varepsilon^*(1,\beta_2) = (\varepsilon_1(1,\beta_0), \ldots,
  \varepsilon_{M}(1,\beta_0))^{\infty}.
\end{equation}

Since the following fact will be used frequently, we highlight it
here:
\begin{equation}\label{1}
  \varepsilon_1^*(1,\beta_2), \ldots, \varepsilon_{M}^*(1,\beta_2)\prec
  \varepsilon_1(1,\beta_1), \ldots, \varepsilon_{M}(1,\beta_1).
\end{equation}

\begin{lem}\label{ll1}
  For any $w\in S_{\beta_2}$, the sequence
  \[
  \varepsilon=\varepsilon_1(1,\beta_1), \ldots,
  \varepsilon_{M}(1,\beta_1), 0^M, w
  \]
  is self-admissible.
\end{lem}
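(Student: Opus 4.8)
The plan is to verify self-admissibility of $\varepsilon = (\varepsilon_1(1,\beta_1),\ldots,\varepsilon_M(1,\beta_1),0^M,w)$ directly from the definition, by checking that every shift $\sigma^i\varepsilon$ is lexicographically dominated by the appropriate prefix of $\varepsilon$. Since we already know that $w\in S_{\beta_2}$ is admissible and that the building blocks interact nicely (via the key inequality \eqref{1}), the work reduces to analyzing where each shift $\sigma^i$ lands relative to the three segments: the initial length-$M$ block $\varepsilon_1(1,\beta_1),\ldots,\varepsilon_M(1,\beta_1)$, the block of $M$ zeros, and the tail $w$.

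First I would split into cases according to the starting index $i$ of the shift. The easiest cases are when $\sigma^i\varepsilon$ begins inside the zero block or at its boundary: then $\sigma^i\varepsilon$ starts with a $0$ (or with a short run of zeros followed by $w$), whereas the comparison prefix $\varepsilon_1(1,\beta_1),\ldots$ starts with $\varepsilon_1(1,\beta_1)\ge 1$, so the lexicographic inequality $\sigma^i\varepsilon\prec \varepsilon$ holds immediately at the first coordinate. The case where the shift starts inside the tail $w$ should follow from the fact that $w$ itself is admissible with respect to $\beta_2$, so its shifts are dominated by $\varepsilon^*(1,\beta_2)=(\varepsilon_1(1,\beta_0),\ldots,\varepsilon_M(1,\beta_0))^\infty$; I would then compare this against the prefix of $\varepsilon$, again using that $\varepsilon_1(1,\beta_1),\ldots,\varepsilon_M(1,\beta_1)$ lexicographically exceeds $\varepsilon^*(1,\beta_2)$'s initial block by \eqref{1}.

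The main obstacle is the case $1\le i<M$, where the shift $\sigma^i\varepsilon$ begins inside the first block, namely with $\varepsilon_{i+1}(1,\beta_1),\ldots,\varepsilon_M(1,\beta_1)$ and then continues into the zero block. Here I must show
\[
\varepsilon_{i+1}(1,\beta_1),\ldots,\varepsilon_M(1,\beta_1),0,\ldots \preceq \varepsilon_1(1,\beta_1),\ldots,\varepsilon_{M-i}(1,\beta_1),\ldots.
\]
The self-admissibility of $\varepsilon_1(1,\beta_1),\ldots,\varepsilon_M(1,\beta_1)$ (it is a prefix of the expansion of $1$, hence self-admissible) gives
\[
\varepsilon_{i+1}(1,\beta_1),\ldots,\varepsilon_M(1,\beta_1) \preceq \varepsilon_1(1,\beta_1),\ldots,\varepsilon_{M-i}(1,\beta_1),
\]
and if this is a strict inequality we are done at once. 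The delicate subcase is equality, i.e.\ when the shifted block matches the prefix exactly; then the comparison passes to the coordinates after position $M-i$, where $\sigma^i\varepsilon$ has a $0$. Since the corresponding coordinate $\varepsilon_{M-i+1}(1,\beta_1)$ of the prefix of $\varepsilon$ need not be zero, the inequality would follow, but one must rule out that equality propagates all the way through the zero block in a manner forcing a violation. I would handle this by observing that the subsequent $M$ zeros in $\sigma^i\varepsilon$ are compared against genuine digits of the $\beta_1$-expansion, and the first discrepancy again yields $\prec$; the length-$M$ zero padding is precisely long enough to absorb this comparison before $w$ begins, which is why the zero block has length exactly $M$ rather than something shorter.

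Finally I would package the case analysis into the single conclusion that $\sigma^i\varepsilon\preceq (\text{prefix of }\varepsilon)$ for all $i\ge 1$, which is exactly the self-admissibility criterion, completing the proof. The key structural inputs are \eqref{1}, the self-admissibility of the initial $\beta_1$-block, and the admissibility of $w$ relative to $\beta_2$; the role of the $0^M$ padding is to create enough lexicographic ``slack'' so that any shift straddling the boundary drops strictly below the comparison prefix.
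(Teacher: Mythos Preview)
Your three-case split (shift starting in the initial $M$-block, in the $0^M$ block, or inside $w$) matches the paper's structure, and your handling of the latter two cases is essentially the paper's argument verbatim.

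The gap is in the equality subcase of $1\le i<M$. After the common prefix of length $M-i$, the next $M$ positions of $\sigma^i\varepsilon$ are zeros, while in $\varepsilon$ those positions read $\varepsilon_{M-i+1}(1,\beta_1),\ldots,\varepsilon_M(1,\beta_1),0^{M-i}$. Your claim that ``the first discrepancy again yields $\prec$'' presupposes that some $\varepsilon_j(1,\beta_1)$ with $M-i<j\le M$ is nonzero; otherwise the comparison remains equal through position $2M-i$, and at position $2M-i+1$ one finds $w_1$ in $\sigma^i\varepsilon$ against $0$ in $\varepsilon$, which goes the wrong way whenever $w_1>0$. The length of the zero padding does not save you here. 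The missing observation is that the equality $\varepsilon_{i+1},\ldots,\varepsilon_M=\varepsilon_1,\ldots,\varepsilon_{M-i}$ makes $(\varepsilon_1,\ldots,\varepsilon_M)$ periodic with period $i$; hence if the last $i$ digits were all zero, periodicity would force every digit to vanish, contradicting $\varepsilon_1(1,\beta_1)\ge 1$. Once this is inserted, your argument closes.

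The paper sidesteps this subcase by introducing the recurrence time $k=\tau(\varepsilon_1(1,\beta_1),\ldots,\varepsilon_M(1,\beta_1))$ and writing the initial block as $(\varepsilon_1,\ldots,\varepsilon_k)^{t_0}\varepsilon_1,\ldots,\varepsilon_s$. For $i$ not a multiple of $k$, Lemma~\ref{lem3.3} delivers a \emph{strict} inequality already within the first period, so the equality subcase never arises; for $i=tk$ the periodic structure immediately places a nonzero digit of $\varepsilon$ against a zero of $\sigma^i\varepsilon$ after the common prefix. Your direct route is more elementary in that it avoids the recurrence-time machinery, but it requires the extra periodicity step above to be complete.
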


\begin{proof}
  This will be checked by using properties of the recurrence time and
  the fact (\ref{1}). Denote $\tau(\varepsilon_1(1,\beta_1), \ldots,
  \varepsilon_{M}(1,\beta_1)) = k$. Then $\varepsilon_1(1,\beta_1),
  \ldots, \varepsilon_{M}(1,\beta_1)$ is periodic with a period
  $k$. Thus $\varepsilon$ can be rewritten as
  \[
  (\varepsilon_1, \ldots, \varepsilon_k)^{t_0} \varepsilon_1, \ldots,
  \varepsilon_s, 0^M, w
  \]
  for some $t_0\in \N$ and $0\le s<k$. We will compare
  $\sigma^i\varepsilon$ and $\varepsilon$ for all $i\ge 1$. The proof
  is divided into three steps according to $i\le M$, $M<i<2M$ or $i\ge
  2M$.

  (1) $i\le M$. When $i=tk$ for some $t\in \N$, then
  $\sigma^i(\varepsilon)$ and $\varepsilon$ have common prefix up to
  the $(M-tk)$-th digits. Following this prefix, the next $k$ digits
  in $\sigma^i(\varepsilon)$ is $0^k$, while that is $(\varepsilon_1,
  \ldots, \varepsilon_k)$ in $\varepsilon$, which implies
  $\sigma^i\varepsilon \prec \varepsilon$.

  When $i=tk+\ell$ for some $0<\ell<k$, then $\sigma^i(\varepsilon)$
  begins with $\varepsilon_{\ell+1}, \ldots, \varepsilon_s,0^{k-s}$ if
  $t=t_0$ and begins with $\varepsilon_{\ell+1}, \ldots,
  \varepsilon_k$ if $t=t_0$. By Lemma \ref{lem3.3}, we know that
  \[
  \varepsilon_{\ell+1}, \ldots,\varepsilon_s,0^{k-s} \preceq
  \varepsilon_{\ell+1}, \ldots,\varepsilon_k\prec \varepsilon_1,
  \ldots,\varepsilon_{k-\ell}.
  \]
  Thus $ \sigma^i(\varepsilon)\prec\varepsilon.  $

  (2) $M<i<2M$. For this case, it is trivial because
  $\sigma^i\varepsilon$ begins with $0$.

  (3) $i=2M+\ell$ for some $\ell\ge 0$. Then the sequence
  $\sigma^i(\varepsilon)$ begins with the subword $(w_{\ell+1},
  \ldots, w_{\ell+M})$ of $w$. Since $w\in S_{\beta_2}$, we have
  \[
  w_{\ell+1}, \ldots, w_{\ell+M}\preceq \varepsilon_1^*(1,\beta_2),
  \ldots, \varepsilon_{M}^*(1,\beta_2) \prec \varepsilon_1(1,\beta_1),
  \ldots, \varepsilon_{M}(1,\beta_1).
  \]
  where the last inequality follows from (\ref{1}). Therefore,
  $\sigma^i(\varepsilon)\prec\varepsilon$.
\end{proof}

Now we use Lemma \ref{lll2} and Lemma \ref{ll1}, and a suitable choice
of the self-admissible sequence, to show that the interval defined in
(\ref{f8}) can be large enough. Fix $q\ge M$ such that
\[
0^q\prec \varepsilon_{M+1}(1,\beta_1), \ldots,
\varepsilon_{M+q}(1,\beta_1),
\]
that is, find a position $M+q$ in $\varepsilon(1,\beta_1)$ with
nonzero element $\varepsilon_{M+q} (1,\beta_1)$. The choise of the
integer $q$ guarantees that the cylinder $I_{M+q}^P
(\varepsilon_1(1,\beta_1), \ldots, \varepsilon_M(1,\beta_1),0^q)$ lies
on the left hand side of $\beta_1$.

\begin{lem}\label{ll2}
  Suppose $\beta_0$ and $\beta_1$ are close enough such that
  \eqref{condtion01} holds.
  For any $w\in \Sigma_{\beta_2}^{n-M-q}$ ending with $M$ zeros, the interval
  \[
  \Gamma_n=\Big\{\, T^n_{\beta}1: \beta\in
  I_n^P(\varepsilon_1(1,\beta_1), \ldots, \varepsilon_{M}(1,\beta_1),
  0^q, w) \,\Big\}
  \]
  contains $(x_0+1)/2$.
\end{lem}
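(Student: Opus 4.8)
The plan is to exploit Lemma~\ref{lll2}, which tells us that $\Gamma_n$ is a half-open interval $[0,a)$ and that $T^n_\beta 1$ is continuous and increasing on the cylinder $I_n^P(v)$, where I abbreviate $v=(\varepsilon_1(1,\beta_1),\ldots,\varepsilon_M(1,\beta_1),0^q,w)$. Since every value $T^n_\beta 1$ with $\beta\in I_n^P(v)$ lies in $[0,a)$ and is therefore strictly below $a$, to prove $(x_0+1)/2\in\Gamma_n$ it suffices to exhibit a single $\beta^*\in I_n^P(v)$ with
\[
T^n_{\beta^*}1>\frac{x_0+1}{2};
\]
then $a>T^n_{\beta^*}1>(x_0+1)/2$, so $(x_0+1)/2\in[0,a)=\Gamma_n$. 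Equivalently, I must produce $\beta^*$ in the cylinder for which $1-T^n_{\beta^*}1<(1-x_0)/2$.

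The candidate is the $\beta^*$ whose expansion of $1$ is $v$ followed by the infinite $\beta_0$-expansion of $1$, that is
\[
\varepsilon(1,\beta^*)=\bigl(v_1,\ldots,v_n,\varepsilon_1(1,\beta_0),\varepsilon_2(1,\beta_0),\ldots\bigr).
\]
First I would check that this sequence is self-admissible, so that by Parry's characterisation of expansions of $1$ it really is the expansion of $1$ for some $\beta^*$, whence $\beta^*\in I_n^P(v)$. Because $w$ ends in $M$ zeros and $q\ge M$, the sequence can be written as $\varepsilon_1(1,\beta_1),\ldots,\varepsilon_M(1,\beta_1),0^M,W$ with $W=0^{q-M}\,w\,\varepsilon(1,\beta_0)$, so Lemma~\ref{ll1} applies once I verify $W\in S_{\beta_2}$. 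The only delicate point is the junction between $w$ and $\varepsilon(1,\beta_0)$: a shift of $w$ can agree with a prefix of $\varepsilon^*(1,\beta_2)=(\varepsilon_1(1,\beta_0),\ldots,\varepsilon_M(1,\beta_0))^\infty$ only up to the trailing block $0^M$ of $w$, and since this periodic sequence contains no run of $M$ consecutive zeros the comparison must already break strictly inside $w$; together with $\varepsilon(1,\beta_0)\prec\varepsilon^*(1,\beta_2)$ (from $\beta_0<\beta_2$ and Theorem~\ref{t4}(2)) this gives $W\in S_{\beta_2}$. As $\beta_0$ was chosen with infinite expansion, $\varepsilon(1,\beta^*)$ is infinite and is the genuine greedy expansion, so indeed $\beta^*\in I_n^P(v)$.

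With $\beta^*$ in hand the value is read off directly, the tail being precisely $\varepsilon(1,\beta_0)$:
\[
T^n_{\beta^*}1=\sum_{j\ge1}\varepsilon_j(1,\beta_0)\,(\beta^*)^{-j}.
\]
I would then locate $\beta^*$ on the parameter line: comparing the $M$-th digits ($\varepsilon_M(1,\beta_0)<\varepsilon_M(1,\beta_1)$) and using the monotonicity of $\beta\mapsto\varepsilon^*(1,\beta)$ gives $\beta_0<\beta^*$, while the fact that $I_{M+q}^P(\varepsilon_1(1,\beta_1),\ldots,\varepsilon_M(1,\beta_1),0^q)$ lies to the left of $\beta_1$ gives $\beta^*<\beta_1$. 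Since $1=\sum_{j\ge1}\varepsilon_j(1,\beta_0)\beta_0^{-j}$, I can write
\[
1-T^n_{\beta^*}1=\sum_{j\ge1}\varepsilon_j(1,\beta_0)\bigl(\beta_0^{-j}-(\beta^*)^{-j}\bigr),
\]
and apply the elementary estimate recorded just before Theorem~\ref{t1} (with the roles of $\beta_0,\beta_1$ there played by $\beta_0,\beta^*$, which is legitimate because $\varepsilon_j(1,\beta_0)<\beta_0$), letting the number of terms tend to infinity. This yields
\[
1-T^n_{\beta^*}1\le\frac{\beta_0}{(\beta_0-1)^2}(\beta^*-\beta_0)<\frac{\beta_1(\beta_1-\beta_0)}{(\beta_0-1)^2}\le\frac{1-x_0}{2},
\]
using $\beta_0<\beta_1$ and $\beta^*<\beta_1$ in the middle step and the hypothesis \eqref{condtion01} at the end. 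Hence $T^n_{\beta^*}1>(x_0+1)/2$, which finishes the proof.

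I expect the self-admissibility of $W\in S_{\beta_2}$ (equivalently, that $\beta^*$ truly lies in the cylinder $I_n^P(v)$) to be the main obstacle, since this is where the trailing zeros of $w$, the bound $q\ge M$, and the absence of an $M$-run of zeros in $\varepsilon^*(1,\beta_2)$ must all be combined; everything after that reduces to the one-line estimate feeding into \eqref{condtion01}.
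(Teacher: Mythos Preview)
Your proof is correct and follows essentially the same strategy as the paper's: exhibit a single $\beta^*$ in the cylinder with $T^n_{\beta^*}1>(x_0+1)/2$ by appending to $v$ a tail that represents a number close to $1$, then bound $1-T^n_{\beta^*}1$ via the elementary difference estimate and invoke~\eqref{condtion01}. The only difference is cosmetic: the paper appends the periodic tail $(e_1,\ldots,e_M)^\infty=\varepsilon^*(1,\beta_2)$ and compares $\sum e_j\beta_2^{-j}=1$ with $\sum e_j\beta_1^{-j}$, whereas you append $\varepsilon(1,\beta_0)$ and compare $\sum\varepsilon_j(1,\beta_0)\beta_0^{-j}=1$ with $\sum\varepsilon_j(1,\beta_0)(\beta^*)^{-j}$; both routes feed into the same inequality $\frac{\beta_1(\beta_1-\beta_0)}{(\beta_0-1)^2}\le\frac{1-x_0}{2}$.
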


\begin{proof}
  Recall $\varepsilon^*(1,\beta_2) = (\varepsilon_1(1,\beta_0),
  \ldots, \varepsilon_M(1,\beta_0))^{\infty} := (e_1, \ldots,
  e_M)^{\infty}$. Since $w$ ends with $M$ zeros, the sequence $(w,
  (e_1, \ldots, e_M)^{\infty})$ is in $S_{\beta_2}$.  Thus, the number
  $\beta^*$ for which
  \[
  \varepsilon(1,\beta^*)=\varepsilon_1(1,\beta_1), \ldots,
  \varepsilon_{M}(1,\beta_1), 0^q, w, e_1,e_2, \ldots
  \]
  belongs to $I_n^P(\varepsilon_1(1,\beta_1), \ldots,
  \varepsilon_{M}(1,\beta_1), 0^q, w)$ by Lemma \ref{ll1}. Note that
  $\beta^*\le \beta_1$ by the choice of $q$. For such a number
  $\beta^*$,
  \[
  T^n_{\beta^*}1 = \frac{e_1}{\beta^*} + \frac{e_2}{\beta^{*2}} +
  \cdots \ge \frac{e_1}{\beta_1} + \frac{e_2}{\beta_1^2} + \cdots.
  \]
  Note also that
  \[
  1=\frac{e_1}{\beta_2}+\frac{e_2}{\beta_2^2}+\cdots.
  \]
  Thus
  \[
  1-T_{\beta^*}^n1 \le
  \Big(\frac{e_1}{\beta_2}+\frac{e_2}{\beta_2^2}+\cdots\Big) -
  \Big(\frac{e_1}{\beta_1}+\frac{e_2}{\beta_1^2}+\cdots\Big) \le
  \frac{\beta_1(\beta_1-\beta_0)}{(\beta_0 - 1)^2}.
  \]
  Hence $T^n_{\beta^*}1 > \frac{x_0 + 1}{2}$ by
  \eqref{condtion01}. Then we obtain the statement of Lemma \ref{ll2}.
\end{proof}

Now we are in the position to construct a Cantor subset $\mathcal{F}$
of $E$.  Let $\mathfrak{N}$ be a subsequence of integers such that
\[
\liminf_{n\to \infty}\frac{\ell_n}{n}=\lim_{n\in \mathfrak{N},
  \ n\to\infty}\frac{\ell_{n}}{n}=\alpha>0.
\]

{\em Generation $0$ of the Cantor set}. Write
\[
\varepsilon^{(0)}=(\varepsilon_1(1,\beta_1), \ldots,
\varepsilon_{M}(1,\beta_1), 0^q),
\ {\text{and}}\ \mathbb{F}_0=\{\varepsilon^{(0)}\}.
\]
Then the $0$-th generation of the Cantor set is defined as
\[
\mathcal{F}_0=\Big\{\, I_{M+q}^P(\varepsilon^{(0)}):
\varepsilon^{(0)}\in \F_0 \,\Big\}.
\]

\medskip
{\em Generation $1$ of the Cantor set}. Recall that $M$ is the integer
defined for $\beta_2$ in the beginning of this subsection. Let $N\gg
M$. Denote by $U_\ell$ a collection of words in
$S_{\beta_2}$:
\begin{equation}\label{12}
  U_\ell=\Big\{\, u = (0^M, 1, 0^M, a_1, \ldots,a_{N}, 0^M, 1,0^M):
  (a_1, \ldots, a_{N})\in S_{\beta_2} \,\Big\},
\end{equation}
where $\ell=4M+2+N$ is the length of the words in $U_\ell$.
Without causing any confusion, in the sequel, the family $\F_0$ of
words is also called the 0-th generation of the Cantor set
$\mathcal{F}$.

\begin{rem}
  We give a remark on the way that the family $U_{\ell}$ is
  constructed.

  (1) The first $M$-zeros guarantee that for any $\beta_2$-admissible
  word $v$ and $u\in U_{\ell}$, the concatenation $(v, u)$ is still
  $\beta_2$-admissible.

  (2) With the same reason as (1), the other three blocks $0^M$ guarantee that $U_{\ell}\subset
  \Sigma_{\beta_2}^{\ell}$.

  (3) The two digits 1 are added to claim that the digit 1 appears
  regularly in $u\in U_{\ell}$ (recall  Remark \ref{r5} positioned after Theorem \ref{t1}).
\end{rem}

Let $m_{0}=M+q$ be the length of $\varepsilon^{(0)}\in \F_{0}$. Choose
an integer $n_1\in \mathfrak{N}$ such that $n_1\gg m_{0}$,
$\beta_0^{-{n_1}}\le 2(\beta_0-1)^2/\beta_1$ and
$4(n_1+\ell_{n_1})\beta^{-\ell_{n_1}} < \frac{1-x_0}{2}$ by noting $\alpha>0$. Write
\[
n_1-m_{0}=t_1\ell+i, \ \ {\text{for some}}\ t_1\in \N, \ 0\le i<\ell.
\]

First, we collect a family of self-admissible sequences beginning with
$\varepsilon^{(0)}$:
\[
\mathfrak{M}(\varepsilon^{(0)}) = \Big\{\, (\varepsilon^{(0)}, u_1,
\ldots, u_{t_1-1}, u_{t_1},0^i): u_1, \ldots, u_{t_1}\in U_\ell
\,\Big\}.
\]
Here the self-admissibility of the elements in
$\mathfrak{M}(\varepsilon^{(0)})$ follows from Lemma \ref{ll1}.

Second, for each $w\in \mathfrak{M}(\varepsilon^{(0)})$, we will
extract an element belonging to $\F_1$ (the first generation of
$\mathcal{F}$).  Let $\Gamma_{n_1}(w):=\{T_\beta^{n_1}1: \beta\in
I_{n_1}^P(w)\}.$ By Lemma \ref{ll2}, we have
that
\begin{equation}\label{6} \Gamma_{n_1}=\Gamma_{n_1}(w)\supset
  B(x_0, 4(n_1+\ell_{n_1})\beta_0^{-\ell_{n_1}}).
\end{equation}

Now we consider all possible self-admissible sequences of order
$n_1+\ell_{n_1}$ beginning with $w$, denoted by
\[
\A(w) := \big\{\, (w, \eta_1, \ldots, \eta_{\ell_{n_1}}): (w, \eta_1,
\ldots, \eta_{\ell_{n_1}}) \ \mbox{is self-admissible} \, \big\}.
\]
Then
\begin{equation}\label{7}
  \Gamma_{n_1}(w)= \bigcup_{\varepsilon\in \A(w)} \big\{\,
  T_{\beta}^{n_1}1 : \beta\in I_{n_1+\ell_{n_1}}^P(\varepsilon)
  \,\big\}.
\end{equation}

We show that for each $\varepsilon\in \A(w)$,
\begin{equation}\label{5}
  \big|\big\{\, T_{\beta}^{n_1}1:\beta\in
  I_{n_1+\ell_{n_1}}^P(\varepsilon) \,\big\}\big|\le
  4\beta_0^{-\ell_{n_1}}.
\end{equation}
In fact, for each pair $\beta,\beta'\in
I_{n_1+\ell_{n_1}}^P(\varepsilon)$, we have
\[
T^{n_1}_{\beta}1 = \frac{\eta_1}{\beta} + \cdots +
\frac{\eta_{\ell_{n_1}} + y}{\beta^{\ell_{n_1}}}, \qquad T^{n_1}_{\beta'}1
= \frac{\eta_1}{\beta'} + \cdots + \frac{\eta_{\ell_{n_1}} +
  y'}{\beta'^{\ell_{n_1}}}
\]
for some $0\le y, y'\le 1$. Then
\begin{eqnarray*}
  \Big|T^{n_1}_{\beta}1-T^{n_1}_{\beta'}1\Big| &\le&
  \sum_{k=1}^{\ell_{n_1}} \left|\frac{\eta_k}{\beta^k} -
  \frac{\eta_k}{\beta'^k}\right| + \frac{1}{\beta^{\ell_{n_1}}} +
  \frac{1}{\beta'^{\ell_{n_1}}}\\ &\le& \frac{\beta_1}{(\beta_0-1)^2}
  \beta_0^{-n_1 - \ell_{n_1}} + \frac{1}{\beta^{\ell_{n_1}}} +
  \frac{1}{\beta'^{\ell_{n_1}}} \le 4\beta_0^{-\ell_{n_1}}.
\end{eqnarray*}
Then Lemma \ref{lll2}, together with the estimate (\ref{5}), enables
us to conclude the following simple facts:
\begin{itemize}
\item for each $\varepsilon\in \A(w)$, $\big\{\,
  T_{\beta}^{n_1}1:\beta\in I^P_{n_1+\ell_{n_1}}(\varepsilon)
  \,\big\}$ is an interval, since $I^P_{n_1+\ell_{n_1}}(\varepsilon)$
  is an interval;

\item for every pair $\varepsilon, \varepsilon'\in \A(w)$, if
  $\varepsilon\prec\varepsilon'$, then by the monotonicity of
  $T_{\beta}^{n_1}1$ with respect to $\beta$ we have $\big\{\,
  T_{\beta}^{n_1}1:\beta\in I^P_{n_1+\ell_{n_1}}(\varepsilon)
  \,\big\}$ lies on the left hand side of
  $\big\{T_{\beta}^{n_1}1:\beta\in
  I^P_{n_1+\ell_{n_1}}(\varepsilon')\big\}$. Therefore, these
  intervals of the union in \eqref{7} are arranged in $[0,1]$
  consecutively;

\item moreover, there are no gaps between adjoint intervals in the
  union of \eqref{7}, since $\Gamma_{n_1}(w)$ is an interval;

\item the length of the interval $\big\{\, T_{\beta}^{n_1}1:\beta\in
  I^P_{n_1+\ell_{n_1}}(\varepsilon) \,\big\}$ is less than
  $4\beta_0^{-\ell_{n_1}}$.
\end{itemize}

By these four facts, we conclude that there are at least
$(n_1+\ell_{n_1})$ consecutive cylinders
$I_{n_1+\ell_{n_1}}^P(\varepsilon)$ with $\varepsilon\in \A(w)$ such
that $\big\{\, T_{\beta}^{n_1}1:\beta\in
I^P_{n_1+\ell_{n_1}}(\varepsilon) \,\big\}$ are contained in the ball
$B(x_0, 4(n_1+\ell_{n_1})\beta_0^{-\ell_{n_1}})$. Thus by Corollary
\ref{c1}, there exists a cylinder, denoted by
\[
I_{n_1+\ell_{n_1}}^P(w, w_1^{(1)}, \ldots, w_{\ell_{n_1}}^{(1)})
\]
satisfying that
\begin{itemize}
\item The recurrence time is full, i.e.\ $\tau(w, w_1^{(1)}, \ldots,
  w_{\ell_{n_1}}^{(1)})=n_1+\ell_{n_1}$;

\item The set $\big\{\, T_{\beta}^{n_1}1:\beta\in
  I_{n_1+\ell_{n_1}}^P (w, w_1^{(1)}, \ldots, w_{\ell_{n_1}}^{(1)})
  \,\big\}$ is contained in $B(x_0,
  4(n_1+\ell_{n_1})\beta_0^{-\ell_{n_1}})$. Thus, for any $\beta\in
  I_{n_1+\ell_{n_1}}^P(w, w_1^{(1)},\ldots,
  w_{\ell_{n_1}}^{(1)})$,
  \begin{equation}\label{2}
    \Big|T^{n_1}_{\beta}1-x_0\Big| <
    4(n_1+\ell_{n_1})\beta_0^{-\ell_{n_1}}.
  \end{equation}
\end{itemize}
This is the cylinder corresponding to $w\in \mathfrak{M}(\varepsilon^{(0)})$ we are looking for in
composing the first generation of the Cantor set.

Finally the first generation of the Cantor set is defined as
\[
\F_1 = \Big\{\, \varepsilon^{(1)}=(w, w_1^{(1)}, \ldots,
w_{\ell_{n_1}}^{(1)}) : w\in \mathfrak{M}(\varepsilon^{(0)}) \,
\Big\}, \qquad \mathcal{F}_1 = \bigcup_{\varepsilon^{(1)}\in
  \F_1}I_{n_1+\ell_{n_1}}^P(\varepsilon^{(1)}),
\]
where $w_1^{(1)}, \ldots, w_{\ell_{n_1}}^{(1)}$ depend on
$w\in\mathfrak{M}(\varepsilon^{(0)})$, but we do not show this
dependence in notation for simplicity. Let $m_1=n_1+\ell_{n_1}.$

\medskip
{\em From generation $k-1$ to generation $k$ of the Cantor set
  $\mathcal{F}$}. Assume that the $(k-1)$-th generation $\F_{k-1}$ has
been well defined, which is composed by a collection of words with
full recurrence time.

To repeat the process of the construction of the Cantor set, we
present similar results as Lemma \ref{ll1} and Lemma \ref{ll2}.

\begin{lem}\label{l5}
  Let $\varepsilon^{(k-1)}\in \mathbb{F}_{k-1}$. Then for any $u\in
  S_{\beta_2}$ ending with $M$ zeros, the
  sequence $$(\varepsilon^{(k-1)}, u)$$ is self-admissible.
\end{lem}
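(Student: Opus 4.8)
The plan is to verify the defining inequalities of self-admissibility directly for the concatenated word, exploiting two structural features of $\varepsilon^{(k-1)}$ that are guaranteed by the construction: it has \emph{full} recurrence time (every element of each generation is built to have this property), and it begins with the block $\varepsilon_1(1,\beta_1), \ldots, \varepsilon_M(1,\beta_1)$, since it starts with $\varepsilon^{(0)}$. Write $\varepsilon^{(k-1)} = (\varepsilon_1, \ldots, \varepsilon_m)$ and $u = (u_1, \ldots, u_p)$, so that what must be shown is $\sigma^i(\varepsilon_1, \ldots, \varepsilon_m, u_1, \ldots, u_p) \preceq (\varepsilon_1, \ldots, \varepsilon_{m+p-i})$ for every $1 \le i < m+p$. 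This mirrors the proof of Lemma \ref{ll1}, and I would organise the verification into the three regimes $i<m$, $i=m$, and $i>m$, according to where the shift lands relative to the junction between $\varepsilon^{(k-1)}$ and $u$.

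For $1 \le i < m$ the shift begins inside $\varepsilon^{(k-1)}$. Here I would invoke Lemma \ref{lem3.3} with $\tau(\varepsilon^{(k-1)})=m$: full recurrence time yields the \emph{strict} inequalities $\varepsilon_{i+1}, \ldots, \varepsilon_m \prec \varepsilon_1, \ldots, \varepsilon_{m-i}$. Because strictness is already decided within the first $m-i$ coordinates, the symbols of $u$ sitting further to the right in $\sigma^i(\varepsilon^{(k-1)}, u)$ cannot affect the comparison, so $\sigma^i(\varepsilon^{(k-1)}, u) \prec (\varepsilon_1, \ldots, \varepsilon_{m+p-i})$. This is exactly where full recurrence time, rather than mere self-admissibility, is essential: an equality $\varepsilon_{i+1}, \ldots, \varepsilon_m = \varepsilon_1, \ldots, \varepsilon_{m-i}$ for some $i$ would force a further comparison of $u$ against an uncontrolled tail of $\varepsilon^{(k-1)}$.

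For $i=m$ the shifted word is $u$ itself, to be compared with the prefix $\varepsilon_1, \ldots, \varepsilon_p$. Since $u \in S_{\beta_2}$, the admissibility criterion of Parry (Theorem \ref{t4}) gives $u_1, \ldots, u_M \preceq \varepsilon_1^*(1,\beta_2), \ldots, \varepsilon_M^*(1,\beta_2)$, and combining with the key fact (\ref{1}) I obtain $u_1, \ldots, u_M \prec \varepsilon_1(1,\beta_1), \ldots, \varepsilon_M(1,\beta_1) = \varepsilon_1, \ldots, \varepsilon_M$; strictness inside the first $M$ coordinates settles the comparison. For $m < i < m+p$ write $i=m+j$ with $1\le j<p$, so the shifted word is $u_{j+1}, \ldots, u_p$. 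If its length $p-j$ is at least $M$, the same argument via $\sigma^j u \preceq \varepsilon^*(1,\beta_2)$ and (\ref{1}) applies. If $p-j<M$, I would use the hypothesis that $u$ ends with $M$ zeros: then $u_{j+1}, \ldots, u_p$ consists entirely of zeros, which is $\prec \varepsilon_1, \ldots, \varepsilon_{p-j}$ because $\varepsilon_1 = \varepsilon_1(1,\beta_1) \ge 1$. Collecting the three regimes establishes self-admissibility of $(\varepsilon^{(k-1)}, u)$.

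The one genuine subtlety—and the main obstacle—is the short-tail subcase $p-j<M$ in the last regime: there one cannot extract strictness from (\ref{1}) alone, since (\ref{1}) only guarantees a strict difference somewhere among the first $M$ symbols, possibly beyond position $p-j$. This is precisely what the trailing block of $M$ zeros in $u$ is engineered to circumvent, and it explains why the lemma is stated only for $u$ ending in $M$ zeros.
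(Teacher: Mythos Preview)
Your proof is correct and follows essentially the same route as the paper's: the paper also splits into the three regimes $1\le i<m_{k-1}$ (using Lemma~\ref{lem3.3} and full recurrence time), $m_{k-1}\le i\le m_{k-1}+|u|-M$ (using $u\in S_{\beta_2}$ together with (\ref{1}) to bound any $M$-block of $u$), and $i\ge m_{k-1}+|u|-M$ (using the trailing $M$ zeros). Your separate treatment of $i=m$ is just the $j=0$ instance of the middle regime, and your explicit identification of the short-tail subcase as the reason for the ``ending with $M$ zeros'' hypothesis matches the paper's final step.
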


\begin{proof}
  Let $1\le i<m_{k-1}$, where $m_{k-1}$ is the order of
  $\varepsilon^{(k-1)}$. Since $\varepsilon^{(k-1)}$ is of full
  recurrence time, an application of Lemma \ref{lem3.3} yields that
  \[
  \sigma^i(\varepsilon^{(k-1)}, u)\prec\varepsilon^{(k-1)}.
  \]

  Moreover, combining the assumption of $u\in S_{\beta_2}$ and
  \eqref{1}, we obtain that any block of $M$ consecutive digits in $u$
  is strictly less than the prefix of $\varepsilon^{(k-1)}$. In other
  words, when $m_{k-1}\le i\le m_{k-1}+|u|-M$, we have $
  \sigma^i(\varepsilon^{(k-1)}, u)\prec\varepsilon^{(k-1)}.  $

  At last, since $u$ ends with $M$ zeros, clearly when $i\ge
  m_{k-1}+|u|-M$, we obtain $ \sigma^i(\varepsilon^{(k-1)},
  u)\prec\varepsilon^{(k-1)}.  $
\end{proof}

\begin{lem}\label{ll4}
  For any $\varepsilon^{(k-1)}\in \F_{k-1}$ and $u\in S_{\beta_2}$
  ending with $M$ zeros, write $n=|\varepsilon^{(k-1)}|+|u|$. Then
  \[
  \Gamma_n=\Big\{\, T^n_{\beta}1: \beta\in I_n^P(\varepsilon^{(k-1)},
  u) \,\Big\}
  \]
  contains $(x_0+1)/2$.
\end{lem}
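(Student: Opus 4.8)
The plan is to follow the proof of Lemma~\ref{ll2} almost verbatim, with Lemma~\ref{l5} playing the role that Lemma~\ref{ll1} played there. First, since $\varepsilon^{(k-1)}\in\mathbb{F}_{k-1}$ is of full recurrence time and $u\in S_{\beta_2}$ ends with $M$ zeros, Lemma~\ref{l5} shows that $(\varepsilon^{(k-1)},u)$ is self-admissible. Hence Lemma~\ref{lll2} applies and tells us that $\Gamma_n=[0,a)$ for some $a\le 1$ and that $\beta\mapsto T^n_\beta 1$ is continuous and strictly increasing on $I_n^P(\varepsilon^{(k-1)},u)$. Since $\Gamma_n$ is a half-open interval with left endpoint $0$, to prove that $(x_0+1)/2\in\Gamma_n$ it suffices to exhibit a single parameter $\beta^*\in I_n^P(\varepsilon^{(k-1)},u)$ with $T^n_{\beta^*}1>(x_0+1)/2$.

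To produce such a $\beta^*$, I would write $\varepsilon^*(1,\beta_2)=(e_1,\dots,e_M)^\infty$ and take $\beta^*$ to be the number whose expansion of $1$ is the infinite sequence
\[
\varepsilon(1,\beta^*)=\bigl(\varepsilon^{(k-1)},\,u,\,e_1,e_2,\dots\bigr).
\]
The first task is to check that this sequence is self-admissible, so that $\beta^*$ exists and lies in $I_n^P(\varepsilon^{(k-1)},u)$. This is the infinite version of Lemma~\ref{l5}: for shifts landing inside $\varepsilon^{(k-1)}$ one uses its full recurrence time together with Lemma~\ref{lem3.3}, while for shifts landing in the tail $(u,e_1,e_2,\dots)$, which is again a sequence in $S_{\beta_2}$ because $u$ ends with $M$ zeros, every block of $M$ consecutive digits is $\preceq\varepsilon^*(1,\beta_2)\prec\varepsilon_1(1,\beta_1),\dots,\varepsilon_M(1,\beta_1)$ by \eqref{1}, so the comparison with the prefix of $\varepsilon^{(k-1)}$ is decided strictly within the first $M$ symbols. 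The second task is to note $\beta^*<\beta_1$: since $\varepsilon^{(k-1)}$ begins with $\varepsilon^{(0)}=(\varepsilon_1(1,\beta_1),\dots,\varepsilon_M(1,\beta_1),0^q)$, the sequences $\varepsilon(1,\beta^*)$ and $\varepsilon(1,\beta_1)$ agree on their first $M$ symbols but $\beta^*$ carries $0^q$ in positions $M+1,\dots,M+q$, where $0^q\prec\varepsilon_{M+1}(1,\beta_1),\dots,\varepsilon_{M+q}(1,\beta_1)$ by the choice of $q$; thus $\varepsilon(1,\beta^*)\prec\varepsilon(1,\beta_1)$ and Theorem~\ref{t4}(2) gives $\beta^*<\beta_1$.

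With $\beta^*$ in hand the computation is identical to that in Lemma~\ref{ll2}. Since $1=\sum_{i\ge1}e_i/\beta_2^{\,i}$, since $\beta_0<\beta_2<\beta_1$, and since $\beta^*<\beta_1$ together with $e_1\ge1$ forces $T^n_{\beta^*}1=\sum_{i\ge1}e_i/\beta^{*\,i}>\sum_{i\ge1}e_i/\beta_1^{\,i}$, I would estimate
\[
1-T^n_{\beta^*}1<\Bigl(\sum_{i\ge1}\frac{e_i}{\beta_2^{\,i}}\Bigr)-\Bigl(\sum_{i\ge1}\frac{e_i}{\beta_1^{\,i}}\Bigr)\le\frac{\beta_1(\beta_1-\beta_0)}{(\beta_0-1)^2}\le\frac{1-x_0}{2},
\]
where the middle inequality follows from the elementary estimate stated just after Corollary~\ref{c3} (the digits satisfy $e_i<\beta_0$) exactly as in Lemma~\ref{ll2}, and the last one is \eqref{condtion01}. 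This yields $T^n_{\beta^*}1>(x_0+1)/2$, and since $T^n_{\beta^*}1\in\Gamma_n=[0,a)$ we conclude $(x_0+1)/2<a$, i.e.\ $(x_0+1)/2\in\Gamma_n$.

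The only genuinely new point compared with Lemma~\ref{ll2} is the self-admissibility of the infinite word $(\varepsilon^{(k-1)},u,\varepsilon^*(1,\beta_2))$, and I expect this to be the main obstacle: the tail is now the periodic block $\varepsilon^*(1,\beta_2)$ rather than a string of zeros, so Lemma~\ref{l5} cannot be quoted directly and its argument must be re-run for this continuation, the crucial input being \eqref{1}. Everything else — the interval structure of $\Gamma_n$, the monotonicity, the bound $\beta^*<\beta_1$, and the final arithmetic — transfers from the earlier lemmas without essential change.
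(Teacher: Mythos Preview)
Your proposal is correct and follows essentially the same approach as the paper: the paper's proof merely states that, by the same argument as Lemma~\ref{l5}, the infinite sequence $(\varepsilon^{(k-1)},u,(e_1,\dots,e_M)^\infty)$ is self-admissible, and then that the computation of Lemma~\ref{ll2} goes through unchanged. You have in fact supplied more detail than the paper does---in particular you spell out why $\beta^*<\beta_1$ via the common prefix $\varepsilon^{(0)}$ and the choice of $q$, and you correctly isolate the self-admissibility of the infinite continuation as the only point requiring a fresh check.
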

\begin{proof}
  With the same argument as Lemma \ref{l5}, we can prove that the
  sequence $(\varepsilon^{(k-1)}, u, (e_1,\cdots,e_M)^{\infty})$ is
  self-admissible. Then with the same argument as that in Lemma
  \ref{ll2}, we can conclude the assertion.
\end{proof}

Let $\varepsilon^{(k-1)}\in \F_{k-1}$ be a word of length
$m_{k-1}$. Choose an integer $n_k\in \mathfrak{N}$ such that $n_k\gg
m_{k-1}$. Write
\[
n_k-m_{k-1}=t_k\ell+i, \ \ {\text{for some}}\ 0\le
i<\ell.
\]

We collect a family of self-admissible sequences beginning with
$\varepsilon^{(k-1)}$:
\[
\mathfrak{M}(\varepsilon^{(k-1)})= \Big\{\, \varepsilon^{(k-1)},
u_1,\ldots,u_{t_k-1}, u_{t_k},0^i: u_1,\ldots,u_{t_k}\in U_\ell \,\Big\}.
\]
Here the self-admissibility of the elements in
$\mathfrak{M}(\varepsilon^{(k-1)})$ follows from Lemma \ref{l5}.


Then in the light of Lemma \ref{ll4}, the left argument for the
construction of $\F_k$ (the $k$-th generation of $\mathcal{F}$) is
absolutely the same as that for $\F_1$.

For each $w\in \mathfrak{M}(\varepsilon^{(k-1)})$, we can extract a
word of length $n_k+\ell_{n_k}$ with full recurrence time belonging to
$\F_k$, denoted by
\[
(w, w^{(k)}_1,\ldots,w^{(k)}_{\ell_{n_k}}).
\]

Then the $k$-th generation $\mathbb{F}_k$ is defined
as
\begin{equation}\label{3}
  \F_k=\Big\{\, \varepsilon^{(k)}=(w,
  w^{(k)}_1,\ldots,w^{(k)}_{\ell_{n_k}}): w\in
  \mathfrak{M}(\varepsilon^{(k-1)}),\ \varepsilon^{(k-1)}\in \F_{k-1}
  \, \Big\},
\end{equation}
and
\[
\mathcal{F}_k=\bigcup_{\varepsilon^{(k)}\in
  \mathbb{F}_{k}}I^P_{n_k+\ell_{n_k}}(\varepsilon^{(k)}).
\]
Note also that $w^{(k)}_1, \ldots, w^{(k)}_{\ell_{n_k}}$ depend on $w$
for each $w\in \mathfrak{M}(\varepsilon^{(k-1)})$.

Continue this procedure, we get a nested sequence
$\{\mathcal{F}_k\}_{k\ge 1}$ consisting of cylinders. Finally, the
desired Cantor set is defined as
\[
\mathcal{F}=\bigcap_{k=1}^{\infty}\bigcup_{\varepsilon^{(k)}\in
  \mathbb{F}_k}I_{|\varepsilon^{(k)}|}^P(\varepsilon^{(k)}) =
\bigcap_{k=1}^{\infty}\bigcup_{\varepsilon^{(k)}\in
  \mathbb{F}_k}I_{n_k+\ell_{n_k}}^P(\varepsilon^{(k)}).
\]

\begin{lem}
  $\mathcal{F}\subset E$.
\end{lem}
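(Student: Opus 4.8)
The plan is to read the inclusion $\mathcal F\subset E$ directly off the nested structure of the Cantor set together with the distance estimate \eqref{2} that is built into every generation. First I would fix $\beta\in\mathcal F$. By construction $\mathcal F=\bigcap_{k\ge 1}\mathcal F_k$ with $\mathcal F_k=\bigcup_{\varepsilon^{(k)}\in\mathbb F_k}I^P_{n_k+\ell_{n_k}}(\varepsilon^{(k)})$, and the families $\{\mathcal F_k\}_{k\ge 1}$ are nested since each cylinder of generation $k$ is a sub-cylinder of one of generation $k-1$. As cylinders of a fixed order are pairwise disjoint, for every $k$ there is a unique $\varepsilon^{(k)}\in\mathbb F_k$ with $\beta\in I^P_{n_k+\ell_{n_k}}(\varepsilon^{(k)})$. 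Thus it suffices to exhibit infinitely many indices $n$ at which the defining inequality of $E$ holds for this $\beta$.

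Next I would invoke the way $\mathbb F_k$ was built. Each $\varepsilon^{(k)}=(w,w^{(k)}_1,\dots,w^{(k)}_{\ell_{n_k}})$, with $w\in\mathfrak M(\varepsilon^{(k-1)})$ of length $n_k$, was selected by Corollary \ref{c1} (using Lemma \ref{ll4} and the four displayed facts preceding \eqref{2}) precisely so that the set $\{\,T^{n_k}_\beta 1:\beta\in I^P_{n_k+\ell_{n_k}}(\varepsilon^{(k)})\,\}$ is contained in $B\bigl(x_0,4(n_k+\ell_{n_k})\beta_0^{-\ell_{n_k}}\bigr)$; this is \eqref{2} for $k=1$ and its verbatim analogue for general $k$. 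Applying it to our fixed $\beta$ gives
\[
\bigl|T^{n_k}_\beta 1-x_0\bigr|<4(n_k+\ell_{n_k})\beta_0^{-\ell_{n_k}}\qquad\text{for every }k\ge 1.
\]
Since $n_k\in\mathfrak N$ with $n_k\gg m_{k-1}\ge n_{k-1}$, the sequence $\{n_k\}_{k\ge 1}$ is strictly increasing, so the displayed inequality holds for infinitely many $n=n_k$, which is exactly the ``i.o.'' requirement.

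The one step needing care, and the main obstacle, is the base appearing in the radius. The construction delivers the estimate with the fixed global base $\beta_0$, whereas membership in $E$ asks for the radius $\beta^{-\ell_n}$ with the \emph{actual} parameter $\beta$; and since every $\beta\in\mathcal F$ satisfies $\beta_0<\beta\le\beta_1$ one has $\beta_0^{-\ell_{n_k}}\ge\beta^{-\ell_{n_k}}$, so the two bounds do not match in the favourable direction and the exponential distortion $(\beta/\beta_0)^{\ell_{n_k}}$ cannot be absorbed into the polynomial factor $4(n_k+\ell_{n_k})$. I would handle this using the freedom, granted by \eqref{condtion01}, to choose $\beta_1$ as close to $\beta_0$ as desired: what the construction literally yields is the inclusion with $\beta_0^{-\ell_n}$ in place of $\beta^{-\ell_n}$, equivalently $\beta$ lands in the analogue of $E$ with the slightly smaller effective exponent $\ell^{\mathrm{eff}}_n=\ell_n\log\beta_0/\log\beta$, whose liminf is $\alpha\log\beta_0/\log\beta_1$. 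As $\log\beta_0/\log\beta_1\to 1$ when $\beta_1\downarrow\beta_0$, this distortion perturbs the target exponent $\alpha$ by an arbitrarily small amount and is recovered in the subsequent dimension computation by letting $\beta_1\downarrow\beta_0$; with this understanding the stated inclusion $\mathcal F\subset E$ holds. All remaining verifications — that the words in question are self-admissible and that \eqref{2} transcribes to level $k$ — are routine repetitions of Lemmas \ref{ll1}, \ref{ll2}, \ref{l5} and \ref{ll4}, so the substance of the proof is the nesting together with the divergence of $\{n_k\}$.
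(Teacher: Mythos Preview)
Your approach---invoke \eqref{2} (and its level-$k$ analogue) together with the nesting of the $\mathcal F_k$ and the divergence of $\{n_k\}$ to conclude that the defining inequality of $E$ holds at the infinitely many indices $n=n_k$---is exactly the paper's approach; the paper's entire proof is the single sentence ``This is clear by \eqref{2}.''

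You actually go further than the paper in one respect: you correctly notice that \eqref{2} delivers the bound $4(n_k+\ell_{n_k})\beta_0^{-\ell_{n_k}}$ with the \emph{fixed} base $\beta_0$, whereas membership in $E$ as written requires $4(n_k+\ell_{n_k})\beta^{-\ell_{n_k}}$, and since every $\beta\in\mathcal F$ satisfies $\beta>\beta_0$ the former does not imply the latter (the ratio $(\beta/\beta_0)^{\ell_{n_k}}$ is unbounded). The paper's one-liner simply glosses over this. Your diagnosis and your proposed remedy---absorb the discrepancy into the final passage $\beta_1\to\beta_0$ in the dimension estimate---are both correct in substance. The only quibble is the concluding phrase ``with this understanding the stated inclusion $\mathcal F\subset E$ holds'': strictly speaking what you have established is the inclusion into the variant of $E$ with $\beta_0^{-\ell_n}$ in place of $\beta^{-\ell_n}$, and it is \emph{that} variant (equivalently, $E$ defined with $\beta_0^{-\ell_n}$ from the outset) which feeds into the remainder of Section~\ref{sec:lowerbound} without loss. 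So your argument is sound and in fact more careful than the paper's; just tighten that final sentence so it does not claim the literal inclusion.
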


\begin{proof}
  This is clear by (\ref{2}).
\end{proof}

\subsection{Measure supported on $\mathcal{F}$}

Though $\F$ can only be viewed as a {\em locally} homogeneous Cantor
set, we define a measure {\em uniformly} distributed among
$\mathcal{F}$. This measure is defined along the cylinders with
non-empty intersection with $\mathcal{F}$. For any $\beta\in
\mathcal{F}$, let $\{I_n^P(\beta)\}_{n\ge 1}$ be the cylinders
containing $\beta$ and write
\[
\varepsilon(1,\beta)=(\varepsilon^{(k-1)}, u_1,\ldots, u_{t_k},
w_1^{(k)},\ldots, w_{\ell_{n_k}}^{(k)}, \ldots ),
\]
Here the last block $u_{t_k}$ contains the left zeros $0^i$ and the
order of $\varepsilon^{(k-1)}$ is $n_{k-1}+\ell_{n_{k-1}}$.

Now define
\[
\mu\big(I_{M+q}^P(\varepsilon^{(0)})\big)=1,
\]
and let
\[
\mu\big(I_{n_1}^P(\varepsilon^{(0)},
u_1,\ldots,u_{t_1})\big)=\left(\frac{1}{\sharp
  \Sigma_{\beta_2}^N}\right)^{t_1}.
\]
In other words, the measure is uniformly distributed among the
offsprings of the cylinder $I_{M+q}^P(\varepsilon^{(0)})$ with
nonempty intersection with $\mathcal{F}$.

Next for each $n_1<n\le n_1+\ell_{n_1}$, let
\[
\mu\big(I_{n}^P(\beta)\big)=\mu\big(I_{n_1}^P(\beta)\big).
\]

Assume that $\mu\big(I_{n_{k-1}+\ell_{n_{k-1}}}^P(\beta)\big)$,
i.e.\ $\mu\big(I_{n_{k-1}+\ell_{n_{k-1}}}^P(\varepsilon^{(k-1)})\big)$
has been defined.

(1) Define
\begin{equation}\label{ff2}
  \mu\big(I_{n_k}^P(\varepsilon^{(k-1)},u_1,\ldots,u_{t_k})\big) :=
  \left(\frac{1}{\sharp\Sigma_{\beta_2}^{N}}\right)^{t_k} \mu
  \big(I_{|\varepsilon^{(k-1)}|}^P(\varepsilon^{(k-1)})\big) =
  \left(\prod_{j=1}^k \big(\sharp \Sigma_{\beta_2}^{N}\big)^{t_j}
  \right)^{-1}.
\end{equation}

(2) When $n_{k-1}+\ell_{n_{k-1}}<n<n_k$, let
\[
\mu\big(I_n^P(\beta)\big)=\sum_{I^P_{n_k}(w)\in \mathcal{F}_k:
  I^P_{n_k}(w)\cap I_n^P(\beta)\ne
  \emptyset}\mu\big(I_{n_k}^P(w)\big).
\]
More precisely, when $n=n_{k-1}+\ell_{n_{k-1}}+t\ell$,
\begin{equation}\label{ff3}
  \mu\big(I_n^P(\beta)\big)=\prod_{j=1}^{k-1}\left(\frac{1}{\sharp
    \Sigma_{\beta_2}^{N}}\right)^{t_j}\cdot \left(\frac{1}{\sharp
    \Sigma_{\beta_2}^{N}}\right)^{t},
\end{equation}
and when $n=n_{k-1}+\ell_{n_{k-1}}+t\ell+i$ for some $i\ne 0$, we have
\begin{equation}\label{ff4}
  \mu\big(I_{n_{k-1}+\ell_{n_{k-1}}+t\ell}^P(\beta)\big)\ge
  \mu\big(I_n^P(\beta)\big)\ge \max \Big\{\,
  \mu\big(I_{n_{k-1}+\ell_{n_{k-1}}+(t+1)\ell}^P(\beta)\big),
  \ \mu\big(I_{n_k}^P(\beta)\big) \,\Big\}.
\end{equation}

(3) When $n_k<n\le n_k+\ell_{n_k}$, take
\begin{equation}\label{ff5}
  \mu\big(I_{n}^P(\beta)\big)=\mu\big(I_{n_k}^P(\beta)\big).
\end{equation}

\subsection{Lengths of cylinders}
Now we estimate the lengths of cylinders with non-empty intersection
with $\mathcal{F}$.

Let $(\varepsilon_1,\cdots,\varepsilon_n)$ be self-admissible such
that $I_n^P := I_n^P(\varepsilon_1, \ldots, \varepsilon_n)$ has
non-empty intersection with $\mathcal{F}$. Thus there exists $\beta\in
\mathcal{F}$ such that $I_n^P$ is just the cylinder containing
$\beta$. Let $n_k\le n<n_{k+1}$ for some $k\ge 1$. The estimate of the
length of $I_n^P$ is divided into two cases according to the range of
$n$.

(1) When $n_k\le n<n_{k}+\ell_{n_k}$. The length of $I_n^P$ is bounded
from below by the length of cylinders containing $\beta$ with order
$n_{k}+\ell_{n_k}+M$.

By the construction of $\mathcal{F}_k$, we know that
$\varepsilon(1,\beta)$ can be expressed as
\[
\varepsilon(1,\beta)=(\varepsilon^{(k)}, 0^M, 1,\ldots),
\]
which implies the self-admissibility of $(\varepsilon^{(k)}, 0^M, 1)$.
Then clearly $(\varepsilon^{(k)}, 0^M, 0)$ is self-admissible as
well. Then by Lemma \ref{l3}, we know that $(\varepsilon^{(k)}, 0^M,
0)$ is of full recurrence time. Thus,
\begin{equation}\label{ff6}
  \big|I_n^P\big|\ge \big|I_{n_k+\ell_{n_k}+M}^P(\beta)\big|\ge
  C\beta_1^{-(n_k+\ell_{n_k}+M+1)}:=C_1 \beta_1^{-(n_k+\ell_{n_k})}.
\end{equation}

(2) When $n_{k}+\ell_{n_k}\le n<n_{k+1}$. Let
$t=n-n_k-\ell_{n_k}$. Write $\varepsilon(1,\beta)$ as
\[
\varepsilon(1,\beta) = (\varepsilon^{(k)}, \eta_1,\ldots,
\eta_{t},\ldots)
\]
for some $(\eta_1,\cdots,\eta_{t})\in \Sigma_{\beta_2}^{t}$. Lemma
\ref{l5} tells us that
\[
(\varepsilon^{(k)}, \eta_1,\ldots,\eta_{t}, 0^M, 1, 0^M)
\]
is self-admissible. Then with the same argument as case (1), we obtain
\begin{equation}\label{ff7}
  \big|I_n^P\big|\ge \big|I_{n+M+1}^P(\varepsilon^{(k)},\eta_1,
  \ldots, \eta_{t}, 0^M, 0)\big|\ge
  C\beta_1^{-(n+M+1)}:=C_1\beta_1^{-n}.
\end{equation}

\subsection{Measure of balls}

%
%
%
%
%

Now we consider the measure of arbitrary balls $B(\beta,r)$ with
$\beta\in \mathcal{F}$ and $r$ small enough.  Together with the
$\mu$-measure and the lengths of cylinders with non-empty intersection
with $\mathcal{F}$ given in the last two subsections, it follows
directly that
\begin{cor}\label{c2}
  For any $\beta\in \mathbb{F}$,
  \begin{equation}\label{4}
    \liminf_{n\to\infty}\frac{\log \mu\big(I_{n}^P(\beta)\big)}{\log
      |I_{n+1}^P(\beta)|}\ge \frac{1}{1+\alpha}\frac{\log
      \beta_2}{\log \beta_1}\frac{N}{\ell},
  \end{equation}
  where $N$ and $\ell$ are the integers in the definition of
  $U_{\ell}$ (see (\ref{12})).
\end{cor}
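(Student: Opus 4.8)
The plan is to read off the numerator and denominator of the ratio in (\ref{4}) directly from the measure formulas (\ref{ff2})--(\ref{ff5}) and the length bounds (\ref{ff6}) and (\ref{ff7}), expressing everything through the combinatorial quantity $B_k:=\sum_{j=1}^{k}t_j$, the total number of free blocks $U_\ell$ inserted up to the $k$-th generation, together with the orders $n_k$ and $m_k:=n_k+\ell_{n_k}$. Fix $\beta$ in the Cantor set $\mathcal{F}$ and, for large $n$, locate the generation $k$ with $n_k\le n<n_{k+1}$. I would split into the two regimes on which the formulas have different shape: the \emph{hitting regime} $n_k\le n<m_k$, where (\ref{ff5}) keeps the measure frozen at $\mu(I_{n_k}^P)=(\sharp\Sigma_{\beta_2}^N)^{-B_k}$ while (\ref{ff6}) has already pushed the cylinder length down to order $\beta_1^{-m_k}$; and the \emph{block regime} $m_k\le n<n_{k+1}$, where by (\ref{ff3})--(\ref{ff4}) the measure has decayed to at most $(\sharp\Sigma_{\beta_2}^N)^{-(B_k+t)}$ with $n\approx m_k+t\ell$, and by (\ref{ff7}) the length is of order $\beta_1^{-n}$.

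Writing the ratio as $|\log\mu|/\bigl|\log|I_{n+1}^P|\bigr|$ (both logarithms being negative), I would bound it below by pairing a lower bound for $|\log\mu|$ — an upper bound for $\mu$, from (\ref{ff2}) and (\ref{ff4}) — with an upper bound for $\bigl|\log|I_{n+1}^P|\bigr|$, i.e.\ the length lower bounds (\ref{ff6}) and (\ref{ff7}). Rényi's estimate $\sharp\Sigma_{\beta_2}^N\ge\beta_2^N$ (Theorem \ref{Re}) turns $\log\sharp\Sigma_{\beta_2}^N$ into $N\log\beta_2$, so in the hitting regime the ratio is at least $\dfrac{B_k\,N\log\beta_2}{m_k\log\beta_1+|\log C_1|}$, and the whole problem collapses to understanding the arithmetic ratio $B_k/m_k$.

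The arithmetic is where the factor $\tfrac{1}{1+\alpha}$ appears. Using the standing hypothesis $n_k\gg m_{k-1}$, the sum $B_k\ell=\sum_{j\le k}t_j\ell$ is dominated by its last term $t_k\ell=n_k-m_{k-1}-i_k=n_k(1-o(1))$, so $B_k\ell=n_k(1+o(1))$; and since $n_k\in\mathfrak{N}$ gives $\ell_{n_k}/n_k\to\alpha$, one has $m_k=n_k+\ell_{n_k}=n_k(1+\alpha)(1+o(1))$. Hence $B_k/m_k\to\frac{1}{\ell(1+\alpha)}$ and the hitting-regime ratio tends to $\frac{1}{1+\alpha}\frac{\log\beta_2}{\log\beta_1}\frac{N}{\ell}$, exactly the target. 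For the block regime, writing $n\approx m_k+t\ell$ gives a ratio at least $\frac{(B_k+t)N\log\beta_2}{(m_k+(t+1)\ell)\log\beta_1+|\log C_1|}$; since $m_k>B_k\ell$ (because $m_k=B_k\ell+\sum_{j\le k}\ell_{n_j}+O(k\ell)$), the quantity $\frac{B_k+t}{m_k+(t+1)\ell}$ increases in $t$ toward $\frac{1}{\ell}$, so its smallest values occur near $t=0$, that is near $n=m_k$, and coincide asymptotically with the hitting-regime limit. Taking the liminf therefore selects the endpoints $n\approx m_k$ and yields (\ref{4}).

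The step I expect to be most delicate is the uniform control of the $(1+o(1))$ errors as $k\to\infty$: one has to verify that the accumulated lower-order terms — the constant $m_0$, the remainders $\sum_{j\le k}i_j\le k\ell$, and all the earlier contributions $B_{k-1}\ell$, $\sum_{j<k}\ell_{n_j}$ and $n_{k-1}$ — are genuinely negligible against $n_k$. This is precisely what the super-fast growth condition $n_k\gg m_{k-1}$ is designed to guarantee, and once it is in place the two regimes both give the same asymptotic lower bound and the estimate (\ref{4}) follows.
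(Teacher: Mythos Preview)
Your proposal is correct and is precisely the computation the paper leaves implicit: the paper states the corollary as following ``directly'' from the measure formulas (\ref{ff2})--(\ref{ff5}) and the length bounds (\ref{ff6})--(\ref{ff7}), and your two-regime analysis with the counting quantity $B_k=\sum_{j\le k}t_j$ is the natural way to carry that out. The only comment is that your telescoping identity $m_k=m_0+B_k\ell+\sum_{j\le k}i_j+\sum_{j\le k}\ell_{n_j}$ already gives both $B_k\ell\le n_k$ and $m_k>B_k\ell$ cleanly, so the monotonicity-in-$t$ step and the $o(1)$ control are entirely routine once you write this down.
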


First, we refine the cylinders containing some $\beta\in
\mathcal{F}$ as follows. For each $\beta\in \mathcal{F}$ and $n\ge 1$,
define
\begin{eqnarray*}
  J_n(\beta)=\left\{
  \begin{array}{ll}
    I^P_{n_k+\ell_{n_k}}(\beta), & \hbox{when $n_k\le
      n<n_k+\ell_{n_k}$ for some $k\ge 1$;} \\ I_n^P(\beta), &
    \hbox{when $n_k+\ell_{n_k}\le n<n_{k+1}$ for some $k\ge 1$.}
  \end{array}
  \right.
\end{eqnarray*}
and call $J_n(\beta)$ the {\em basic interval} of order $n$ containing
$\beta$.

Now fix a ball $B(\beta,r)$ with $\beta\in \mathcal{F}$ and $r$
small. Let $n$ be the integer such that
\[
\big|J_{n+1}(\beta)\big|\le r<\big|J_n(\beta)\big|.
\]
Let $k$ be the integer such that $n_k\le n<n_{k+1}$. The difference on
the lengths of $J_{n+1}(\beta)$ and $J_n(\beta)$ (i.e., $
|J_{n+1}(\beta)|<|J_n(\beta)|$) yields that $$n_{k}+\ell_{n_k}\le
n<n_{k+1}.$$

Recall the definition of $\mu$. It should be noticed that
\[
\mu\big(J_n(\beta)\big)=\mu\big(I_n^P(\beta)\big), \ \mbox{for all}
\ n\in \N.
\]
Then all {\em basic intervals $J$} with the same order are of equal
$\mu$-measure. So, to bound the measure of the ball $B(\beta, r)$ from
above, it suffices to estimate the number of basic intervals with
non-empty intersection with the ball $B(\beta, r)$. We denote this
number by $\mathcal{N}$. Note that for $n_{k}+\ell_{n_k}\le
n<n_{k+1}$, all basic intervals are of length no less than
$C_1\beta_1^{-n}$. Since $r\le |J_n(\beta)|\le \beta_0^{-n}$, we have
\[
\mathcal{N}\le 2r/(C_1\beta_1^{-n})+2\le
2\beta_0^{-n}/(C_1\beta_1^{-n})+2\le C_2\beta_0^{-n}\beta_1^{n}.
\]
It follows that
\begin{equation}\label{ff8}
  \mu\big(B(\beta,r)\big)\le C_2\beta_0^{-n}\beta_1^{n} \cdot
  \mu\big(I_n^P(\beta)\big).
\end{equation}

Now we give a lower bound for $r$. When $n<n_{k+1}-1$, we have
\begin{equation}\label{ff9} r\ge
  \big|J_{n+1}(\beta)\big|=\big|I^P_{n+1}(\beta)\big|.
\end{equation}
When $n=n_{k+1}-1$, we have
\begin{equation}\label{ff10} r\ge
  \big|J_{n+1}(\beta)\big|\ge C_1\beta_1^{-n_k-\ell_{n_k}}
\end{equation}
Thus, by the formula (\ref{ff8}) (\ref{ff9}) (\ref{ff10}) and
Corollary \ref{c2}, we have
\[
\liminf_{r\to 0}\frac{\log \mu(B(\beta, r))}{\log r}\ge
\left(\frac{\log \beta_0-\log \beta_1}{\log \beta_1}+\frac{\log
  \beta_2}{\log \beta_1}\frac{N}{\ell}\right)\frac{1}{1+\alpha}.
\]

Applying the mass distribution principle (Proposition \ref{p2}), we obtain
\[
\dim_\textsf{H}E \ge \left(\frac{\log \beta_0-\log \beta_1}{\log
\beta_1}+\frac{\log \beta_2}{\log \beta_1}\frac{N}{\ell}\right)\frac{1}{1+\alpha}.
\]
Letting $N\to \infty$ and then $\beta_1\to\beta_0$, we arrive at
\[
\dim_\textsf{H}E \ge \frac{1}{1+\alpha}.
\]

{\section{Lower bound of $E(\{\ell_n\}_{n\ge 1}, x_0)$: $x_0=1$}}

We still use the classic strategy to estimate the dimension of
$E(\{\ell_n\}_{n\ge 1}, 1)$ from below. In fact, we will show a little
stronger result: for any $\beta_0<\beta_1$, the Hausdorff dimension of
the set $E(\{\ell_n\}_{n\ge 1}, 1)\cap (\beta_0, \beta_1)$ is
$1/(1+\alpha)$.

The first step is devoted to constructing a Cantor subset
$\mathcal{F}$ of $E(\{\ell_n\}_{n\ge 1}, 1)$. We begin with some
notation.

As in the beginning of Section 5.1, we can require that $\beta_0$ and
$\beta_1$ are sufficiently close such that the common prefix
\[
(\varepsilon_1(1,\beta_1), \ldots, \varepsilon_{M-1}(1,\beta_1))
\]
of $\varepsilon(1,\beta_0)$ and $\varepsilon(1,\beta_1)$ contains
at least four nonzero terms. Assume that $\varepsilon(1,\beta_1)$
begins with the word $ {o}=(a_1, 0^{r_1-1}, a_2,\allowbreak 0^{r_2-1},
a_3, 0^{r_3-1}, a_4) $ with $a_i\ne 0$. Let
\[
\overline{o}=(0^{r_1}, 1, 0^{r_2}, 1, 0^{r_3}),
\ \ \overline{O}=(0^{r_1}, 1, 0^{r_2+1}).
\]
By the self-admissibility of ${o}$, it follows that if $a_1=1$,
then $\min\{r_2, r_3\}\ge r_1$.  So it is direct to check that for any
$i\ge 0$,
\begin{equation}\label{7.2}
  \sigma^i(\overline{o})\prec \varepsilon_1(1,\beta_1), \ldots,
  \varepsilon_{(r_1+r_2+r_3+2)-i}(1,\beta_1).
\end{equation}

Recall that $\beta_2$ is given in (\ref{fff6}).  Fix an integer
$\ell\gg M$. Define the collection
\[
U_\ell=\big\{u=(\overline{o}, \varepsilon_1, \ldots,
\varepsilon_{\ell-r_1-r_2-r_3-2-M}, 0^M) \in
\Sigma_{\beta_2}^{\ell}\big\}.
\]
Following the same argument as the case (3) in proving Lemma
\ref{ll1} and then by (\ref{1}), we have for any $u\in U_{\ell}$ and $i\ge
r_1+r_2+r_3+2$,
\begin{equation}\label{7.3}
  \sigma^i(u)
\prec
  (\varepsilon_1(1,\beta_1), \ldots, \varepsilon_{M}(1,\beta_1)).
\end{equation}
Combining (\ref{7.2}) and (\ref{7.3}), we get for any $u\in U_{\ell}$ and $i\ge 0$, \begin{equation}\label{7.6}
  \sigma^i(u)
\prec
  (\varepsilon_1(1,\beta_1), \ldots, \varepsilon_{M}(1,\beta_1)).
\end{equation}

Recall that $q$ is the integer such that
\[
(\varepsilon_{M+1}(1,\beta_1), \ldots,
\varepsilon_{M+q}(1,\beta_1))\ne 0^q.
\]
With the help of (\ref{7.6}), we present a result with
the same role as that of Lemma~\ref{ll1}.

\begin{lem}\label{ll3}
  Let $k\in \mathbb{N}$. For any $u_1, \ldots,u_k\in U_\ell$, the word
  \[
  \varepsilon=(\varepsilon_1(1,\beta_1), \ldots,
  \varepsilon_M(1,\beta_1), 0^q, u_1, u_2, \ldots, u_k)
  \]
  is of full recurrence time.
\end{lem}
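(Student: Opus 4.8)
The plan is to prove the full recurrence time directly, by showing that $\varepsilon$ has no nontrivial border. Write $n=M+q+k\ell$ for the length of $\varepsilon$ and $P=(\varepsilon_1(1,\beta_1),\dots,\varepsilon_M(1,\beta_1))$ for its leading block. Assume for contradiction that $\tau(\varepsilon)=s$ for some $1\le s<n$; by definition this means $\sigma^s(\varepsilon)=(\varepsilon_1,\dots,\varepsilon_{n-s})$, i.e.\ $\varepsilon_{s+j}=\varepsilon_j$ for all $1\le j\le n-s$. I would first record the structural facts that drive the contradiction: $\varepsilon_1=a_1\neq0$ since $P$ begins with $o$; $\varepsilon_M=\varepsilon_M(1,\beta_1)\ge1$ because $\beta_0<\beta_1$ forces $\varepsilon_M(1,\beta_1)>\varepsilon_M(1,\beta_0)\ge0$; the block at positions $M+1,\dots,M+q$ consists of zeros with $q\ge M$; and $\varepsilon$ ends in $0^M$, as every $u_j\in U_\ell$ does.

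I would then rule out $s$ in four ranges. If $1\le s\le M-1$, periodicity gives $\varepsilon_{s+M}=\varepsilon_M\neq0$, but $M+1\le s+M\le 2M-1<M+q$, so $\varepsilon_{s+M}=0$, a contradiction. If $M\le s\le M+q-1$, periodicity gives $\varepsilon_{s+1}=\varepsilon_1=a_1\neq0$, while $s+1\in\{M+1,\dots,M+q\}$ forces $\varepsilon_{s+1}=0$. If $M+q\le s\le n-M$, periodicity yields $(\varepsilon_{s+1},\dots,\varepsilon_{s+M})=P$, yet the window starts at $s+1>M+q$, inside $u_1u_2\cdots u_k$, where I claim every length-$M$ window is strictly $\prec P$ and hence cannot equal $P$. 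Finally, if $n-M<s<n$, the suffix $(\varepsilon_{s+1},\dots,\varepsilon_n)$ sits inside the terminal $0^M$ and equals $0^{\,n-s}$, whereas $(\varepsilon_1,\dots,\varepsilon_{n-s})$ begins with $a_1\neq0$, so they differ. As these ranges exhaust $1\le s<n$ (note $n-M>M+q$ since $\ell\gg M$), we conclude $\tau(\varepsilon)=n$.

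The crux is the claim in the third range, that every window of $M$ consecutive digits beginning past position $M+q$ is $\prec P$. When the window lies inside a single factor $u_{j_0}$ this is exactly \eqref{7.6}, which provides a strict lexicographic drop within the first $M$ coordinates. When it straddles two consecutive factors $u_{j_0},u_{j_0+1}$, its leading coordinates fall in the terminal $0^M$ of $u_{j_0}$ (possible since $\ell\gg M$, so a window meets at most two factors), whence the window begins with $0<a_1=P_1$ and is $\prec P$ at once; the same remark covers windows that run to the end of $u_k$. This is precisely why the members of $U_\ell$ were designed both to satisfy \eqref{7.6} and to end in $0^M$, and it is the ingredient that promotes self-admissibility to full recurrence time. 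A more mechanical alternative would be to check that $\varepsilon$ and the word obtained by raising its final digit from $0$ to $1$ are both self-admissible and then apply Lemma \ref{l3}; since those checks rely on the same strict inequality \eqref{7.6} and the terminal $0^M$, I would favour the direct periodicity argument above.
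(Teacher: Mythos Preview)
Your proof is correct and follows essentially the same route as the paper: both argue by case analysis on the shift index, handling shifts landing in the $u$-region via \eqref{7.6} (together with the terminal $0^M$ for straddling windows), and shifts inside the prefix $(P,0^q)$ by elementary position checks. The only cosmetic differences are that you phrase it as a contradiction against equality rather than directly verifying the strict inequality $\sigma^i(\varepsilon)\prec\varepsilon$, and for $1\le s<M$ you use the neat observation $\varepsilon_M(1,\beta_1)\ge1$ in place of the paper's appeal to the proof of Lemma~\ref{ll1}.
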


\begin{proof}
  We check that $\sigma^i(\varepsilon)\prec \varepsilon$ for all $i\ge
  1$. When $i<M+q$, the argument is absolutely the same as that for
  $i<M+q$ in Lemma \ref{ll1}. When $i\ge M+q$, it follows by
  (\ref{7.6}).
\end{proof}

\subsection{Construction of the Cantor subset}
Now we return to the set
\[
E_0 := \big\{\, \beta_0<\beta<\beta_1: \big|T^n_{\beta}1-1| <
\beta^{-\ell_n}, \ {\text{i.o.}}\ n\in \N \,\big\}.
\]

We will use the following strategy to construct a Cantor subset of
$E_0$.\\ $\bullet$ {\em{\sc Strategy:} If the $\beta$-expansion of 1
  has a long periodic prefix with period $n$, then $T^n_{\beta}1$ and
  $1$ will be close enough.}

Let $\{n_k\}_{k\ge 1}$ be a subsequence of integers such that
\[
\lim_{k\to\infty}\frac{\ell_{n_k}}{n_k}=\liminf_{n\to
  \infty}\frac{\ell_n}{n}=\alpha, \ {\text{and}}\ \ n_{k+1}\gg n_k,
\ \mbox{for all} \ k\ge 1.
\]

{\em First generation $\mathcal{F}_1$ of the Cantor set $\mathcal{F}$.}

Let $\varepsilon^{(0)} = (\varepsilon_1(1,\beta_1), \ldots,
\varepsilon_M(1,\beta_1), 0^q)$ and $m_0=M+q$. Write $n_1=m_0+t_1\ell
+i_1$ for some $t_1\in \N$ and $0\le i_1<\ell$.  Now consider the
collection of self-admissible words of length $n_1$
\[
\mathfrak{M}(\varepsilon^{(0)}) = \big\{\, (\varepsilon^{(0)}, u_1,
\ldots, u_{t_1}, 0^{i_1}): u_1, \ldots, u_{t_1}\in U_\ell \,\big\}.
\]
Lemma \ref{ll3} says that all the elements in
$\mathfrak{M}(\varepsilon^{(0)})$ are of full recurrence time.

Enlarging $\ell_{n_1}$ by at most $m_0+\ell$ if necessary, the number
$\ell_{n_1}$ can be written as
\begin{equation}\label{7.4}
  \ell_{n_1}=z_1n_1+m_0+j_1\ell, \ {\text{with}}\ z_1\in \N, \ 0\le
  j_1<t_1.
\end{equation}
Corollary \ref{c3} convinces us that for any $(\varepsilon_1, \ldots,
\varepsilon_{n_1})\in \mathfrak{M}(\varepsilon^{(0)})$, the word
\begin{equation}\label{7.1}
  \varepsilon:=\Big(\big(\varepsilon_1, \ldots, \varepsilon_{n_1}\big),
  \big(\varepsilon_1, \ldots, \varepsilon_{n_1}\big)^{z_1},
  \big(\varepsilon^{(0)}, u_1, \ldots, u_{j_1}\big)\Big)
\end{equation}
is self-admissible. In other words, $\varepsilon$ is a periodic
self-admissible word with length $n_1+\ell_{n_1}$. We remark that the
suffix $\big(\varepsilon^{(0)}, u_1, \ldots, u_{j_1}\big)$ is the
prefix of $(\varepsilon_1, \ldots, \varepsilon_{n_1})$ but not chosen
freely.

Now consider the cylinder
\[
I_{n_1+\ell_{n_1}}^P := I^P_{n_1+\ell_{n_1}} \Big(\big(\varepsilon_1,
\ldots, \varepsilon_{n_1}\big)^{z_1+1}, \big(\varepsilon^{(0)}, u_1,
\ldots, u_{j_1}\big)\Big).
\]
It is clear that for each $\beta\in I_{n_1+\ell_{n_1}}^P$, the
$\beta$-expansion of $T^{n_1}_{\beta}1$ and that of $1$ coincide for
the first $\ell_{n_1}$ terms. So, we conclude that for any $\beta\in
I^P_{n_1+\ell_{n_1}}$,
\begin{equation}\label{oneinequality}
  \big|T_{\beta}^{n_1}1-1\big|<\beta^{-\ell_{n_1}}.
\end{equation}
Now we prolong the word in (\ref{7.1}) to a word of full
recurrence time.
Still by Corollary {\ref{c3}}, we know that $(\varepsilon, u_{j_1+1})$ is self-admissible, which implies the admissibility of the word $$ (\varepsilon, 0^{r_1}, 1, 0^{r_2}, 1).
$$ So, by Lemma \ref{l3}, we obtain that the word
$ (\varepsilon, \overline{O})$ is of full recurrence time. Then
finally, the first generation $\mathcal{F}_1$ of the Cantor set
$\mathcal{F}$ is defined as
\begin{multline*}
  \mathcal{F}_1=\Big\{\,I^P_{(n_1+\ell_{n_1}+r_1+r_2+2)}
  \Big(\big(\varepsilon_1, \ldots, \varepsilon_{n_1}\big)^{z_1+1},
  \big(\varepsilon^{(0)}, u_1, \ldots, u_{j_1},
  \overline{O}\big)\Big): \\ (\varepsilon_1,
  \ldots,\varepsilon_{n_1})\in \mathfrak{M}(\varepsilon^{(0)})
  \,\Big\}.
\end{multline*}

{\em Second generation $\mathcal{F}_2$ of the Cantor set
  $\mathcal{F}$.}

Let $m_1=n_1+\ell_1+r_1+r_2+2$ and
write $$n_2=m_1+t_2\ell+i_2\ {\text{ for some}}\ t_2\in \N, \ 0\le
i_2<\ell.$$ For each $\varepsilon^{(1)}\in \mathcal{F}_1$, consider
the collection of self-admissible words of length $n_2$
\[
\mathfrak{M}(\varepsilon^{(1)}) = \big\{\, (\varepsilon^{(1)}, u_1,
\ldots, u_{t_2}, 0^{i_2}): u_1, \ldots, u_{t_2}\in U_\ell \,\big\}.
\]
By noting that $\varepsilon^{(1)}$ is of full recurrence time and
by the formula (\ref{7.6}), we know that all elements in
$\mathfrak{M}(\varepsilon^{(1)})$ are of full recurrence time.

Similar to the modification on $\ell_{n_1}$, by enlarging $\ell_{n_2}$
by at most $m_1+\ell$ if necessary, the number $\ell_{n_2}$ can be
written as
\begin{equation}\label{7.5}
  \ell_{n_2}=z_2n_2+m_1+j_2\ell, \ {\text{with}}\ z_2\in \N, \ 0\le
  j_2<t_2.
\end{equation}
Then follow the same line as the construction for the first
generation, we get the second generation
$\mathcal{F}_2$,
\begin{multline*}
  \mathcal{F}_2 = \Big\{\, I^P_{(n_2+\ell_{n_2}+r_1+r_2+1)}
  \Big(\big(\varepsilon_1, \ldots,\varepsilon_{n_2}\big)^{z_2+1},
  \big(\varepsilon^{(1)}, u_1, \ldots, u_{j_2},
  \overline{O}\big)\Big): \\ (\varepsilon_1,
  \ldots,\varepsilon_{n_2})\in \mathfrak{M}(\varepsilon^{(1)})
  \,\Big\} .
\end{multline*}
We remark that the suffix $\big(\varepsilon^{(1)}, u_1, \ldots,
u_{j_2}\big)$ is the prefix of
$(\varepsilon_1,\cdots,\varepsilon_{n_2})$ but not chosen freely. Then
let $m_2=n_2+\ell_{n_2}+r_1+r_2+2$.

Then, proceeding along the same line, we get a nested sequence
$\mathcal{F}_k$ consisting of a family of cylinders. The desired
Cantor set is defined as
\[
\mathcal{F}=\bigcap_{k\ge 1}\mathcal{F}_k.
\]
Noting \eqref{oneinequality}, we know that $\mathcal{F} \subset E_0.$

\subsection{Estimate on the supported measure}

The remaining argument for the dimension of $\mathcal{F}$ is almost
the same as what we did in Section 5: constructing an evenly
distributed measure supported on $\mathcal{F}$ and then applying the
mass distribution principle. Thus, we
will not repeat it here.

\section{Proof of Theorem \ref{t2}}

The proof of Theorem \ref{t2} can be established with almost the same
argument as that for Theorem \ref{maintheorem}. Therefore only
differences of the proof are marked below.

\subsection{Proof of the upper bound}

For each self-admissible sequence $(i_1,\ldots,i_n)$,
denote
\[
J_n (i_1, \ldots, i_n):=\Big\{\, \beta\in I_n^P (i_1, \ldots, i_n):
|T_\beta^n1-x(\beta)|< \beta_0^{-\ell_n} \,\Big\}.
\]
These sets correspond to the sets $I_n^P (i_1, \ldots, i_n;
\beta_0^{-\ell_n})$ studied in the proof of
Proposition~\ref{pro:upperbound}, where the upper bound for the case
of constant $x_0$ was obtained. We have that
\[
\left(\widetilde{E}\big(\{\ell_n\}_{n\ge 1}, x\big)\cap
(\beta_0,\beta_1)\right)\subset
\bigcap_{N=1}^{\infty}\bigcup_{n=N}^{\infty}\ \ \bigcup_{(i_1, \ldots,
  i_n) \ {\text{self-admissible}}} J_n (i_1, \ldots, i_n).
\]
What remains is to estimate the diameter of $J_n (i_1, \ldots, i_n)$
for any self-admissible sequence $(i_1, \ldots, i_n)$. If we can get a
good estimate of the diameter, then we can do as in the proof of
Proposition~\ref{pro:upperbound} to get an upper bound of the
dimension of $\widetilde{E}\big(\{\ell_n\}_{n\ge 1}, x\big)\cap
(\beta_0,\beta_1)$.

Suppose $J_n$ is non-empty, and let $\beta_2 < \beta_3$ denote the
infimum and supremum of $J_n$. Let $L$ be such that $\beta \mapsto x
(\beta)$ is Lipschitz continuous, with constant $L$. Denote by $\psi$
the map $\beta \mapsto T_\beta^n (1)$, and note that $\psi$ satisfies
\[
|\psi (\beta_3) - \psi (\beta_2)| \geq \beta_0^n\cdot |\beta_3 - \beta_2|.
\]

Clearly, $\beta_2$ and $\beta_3$ must satisfy
\[
|\psi (\beta_3) - \psi(\beta_2)| - |x (\beta_3) - x(\beta_2)| < 2
\beta_0^{-\ell_n},
\]
and hence, we must have
\begin{equation} \label{eq:conditiononJ}
  \beta_0^n \cdot|\beta_3 - \beta_2| - L\cdot |\beta_3 - \beta_2| < 2
  \beta_0^{-\ell_n}.
\end{equation}
Take $K > 2$. Then we must have $|\beta_3 - \beta_2| \leq K
\beta_0^{-\ell_n - n}$ for sufficiently large $n$, otherwise
\eqref{eq:conditiononJ} will not be satisfied.

Thus, we have proved that $|J_n (i_1,\ldots,i_n)| \leq K \beta_0^{-
  \ell_n - n}$ for some constant $K$. This is all what is needed to
make the proof of Proposition~\ref{pro:upperbound} work also for the
case of non-constant $x_0$.

\subsection{Proof of the lower bound}

Case 1. If $x(\beta)=1$ for all $\beta\in [\beta_0, \beta_1]$, this
falls into the proof of Theorem \ref{maintheorem}.
\medskip

Case 2. Otherwise, we can find a subinterval of $(\beta_0,\beta_1)$
such that the supremum of $x(\beta)$ on this subinterval is strictly
less than 1. We denote by $0\le x_0<1$ the supremum of $x(\beta)$ on
this subinterval. We note that with this definition of $x_0$,
Lemma~\ref{ll2} still holds.

Now that we have Lemma~\ref{ll2}, we can get a lower bound in the same
way as in Section~\ref{sec:lowerbound}, i.e.\ we construct a Cantor
set with desired properties. The proof is more or less unchanged, but
some minor changes are nessesary, as we will describe below.

The sets $\mathbb{F}_0$ and $\mathfrak{M} (\varepsilon^{(0)})$ are
defined as before, and we consider a $w \in \mathfrak{M}
(\varepsilon^{(0)})$. On the interval $I_{n_1}^P (w)$ we define $\psi
\colon \beta \mapsto T_\beta^{n_1} (1)$, and we observe that there are
constants $c_1$ and $c_2$ such that
\[
c_1 \beta_0^{n_1} \leq \psi' (\beta) \leq c_2 \beta_0^{n_1},
\]
holds for all $\beta \in I_{n_1}^P (w)$. As in the proof of the upper
bound, we let $L$ denote the Lipschitz constant of the function $\beta
\mapsto x(\beta)$.

We need to estimate the size of the set
\[
J = \{\, \beta \in I_{n_1}^P (w) : \psi (\beta) \in B (x_0(\beta),
C (n_1 + \ell_{n_1}) \beta_0^{-\ell_{n_1}}) \,\}.
\]
The constant $C$ appearing in the definition of $J$ above, was equal
to $4$ in Section~\ref{sec:lowerbound}. We remark that the value of
$C$ has no influence on the result of the proof, so we may choose it
more freely, as will be done here.

Lemma~\ref{ll2} implies that there is a $\beta_a \in J$ such that
$\psi (\beta_a) = x (\beta_a)$. Suppose $\beta_b \in I_{n_1}^P (w)$ is
such that $|\beta_a - \beta_b| < 4 (n_1 + \ell_{n_1}) \beta_0^{-n_1 -
  \ell_{n_1}}$. We can choose
$C$ so large that we have
\begin{align*}
|\psi (\beta_b) - x (\beta_b)| & \leq |\psi (\beta_a) - \psi(\beta_b)|
+ |x (\beta_a) - x (\beta_b)| \\ & \leq c_2 4 (n_1 + \ell_{n_1})
\beta_0^{-\ell_{n_1}} + L\cdot 4 (n_1 + \ell_{n_1})
\beta_0^{- n_1 - \ell_{n_1}} < C (n_1 + \ell_{n_1})
\beta_0^{-\ell_{n_1}}.
\end{align*}
This proves that $\beta_b$ is in $J$, and hence, $J$ contains an
interval of length at least $4 (n_1 + \ell_{n_1}) \beta_0^{-n_1 -
  \ell_{n_1}}$.

Analogous to the estimate in \eqref{5}, we have that
$|I_{n_1+\ell_{n_1}}^P (\varepsilon)| \leq 4 \beta_0^{-n_1 -
  \ell_{n_1}}$. This implies that there are at least $(n_1 +
\ell_{n_1})$ consequtive cylinders $I_{n_1 + \ell_{n_1}}^P
(\varepsilon)$ with the desired hitting property, where $\varepsilon
\in \mathbb{A} (w)$.

With the changes indicated above, the proof then continues just as in
Section~\ref{sec:lowerbound}.

\section{Application}

This section is devoted to an application of Theorem
\ref{maintheorem}. For each $n\ge 1$, denote by $\ell_n(\beta)$ the
number of the longest consecutive zeros just after the $n$-th digit in
the $\beta$-expansion of 1, namely,
\[
\ell_n(\beta) := \max\{\, k\ge 0: \varepsilon_{n+1}^*(\beta) = \cdots
= \varepsilon_{n+k}^*(\beta)=0 \,\}.
\]
Let
\[
\ell(\beta)=\limsup_{n\to\infty}\frac{\ell_n(\beta)}{n}.
\]
Li and Wu \cite{LiW} gave a kind of classification of betas according
to the growth of $\{\ell_n\}_{n\ge 1}$ as follows:
\begin{align*}
  &A_0=\Big\{\,\beta>1: \{\ell_n(\beta)\}\ \text{is
    bounded}\,\Big\};\\ & A_1=\Big\{\,\beta>1:
  \{\ell_n(\beta)\}\ \mbox{is unbounded and}\ \ \ell(\beta)=0
  \,\Big\};\\ & A_2=\Big\{\,\beta>1: \ell(\beta)>0\,\Big\}.
\end{align*}

We will use the dimensional result of $E(\{\ell_n\}_{n\ge 1}, x_0)$ to
determine the size of $A_1, A_2$ and $A_3$ in the sense of the
Lebesgue measure $\mathcal{L}$ and Hausdorff dimension. In the
argument below only the dimension of $E(\{\ell_n\}_{n\ge 1}, x_0)$
when $x_0=0$ is used. In other words, the result in \cite{PerS} by
Persson and Schmeling is already sufficient for the following
conclusions.

\begin{pro}[Size of $A_0$]
$\mathcal{L}(A_0)=0$ and $\dim_\textsf{H}(A_0)=1$.
\end{pro}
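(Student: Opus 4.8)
The plan is to prove the two assertions separately, reducing both to facts already available for the target $x_0=0$. The key preliminary observation is an elementary dictionary between the gap statistic $\ell_n(\beta)$ and the orbit of $1$. Since $\ell_n(\beta)\ge m$ means exactly that $\varepsilon_{n+1}^*(\beta)=\cdots=\varepsilon_{n+m}^*(\beta)=0$, one has $T_\beta^n1=\beta^{-m}T_\beta^{n+m}1\in[0,\beta^{-m})$ in that case, and conversely $T_\beta^n1<\beta^{-m}$ forces the next $m$ digits to vanish; thus for every $m\ge1$,
\[
\ell_n(\beta)\ge m \iff T_\beta^n1<\beta^{-m}.
\]
Consequently $\sup_n\ell_n(\beta)=\infty$ if and only if $\liminf_{n\to\infty}T_\beta^n1=0$, so that $A_0=\{\beta>1:\liminf_{n\to\infty}T_\beta^n1>0\}$.

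For the measure statement I would simply feed $x_0=0$ into the hitting property \eqref{1.1} (Schmeling \cite{Schme}): for $\mathcal{L}$-almost every $\beta>1$ one has $\liminf_{n\to\infty}|T_\beta^n1-0|=\liminf_{n\to\infty}T_\beta^n1=0$. By the dictionary above this says $\sup_n\ell_n(\beta)=\infty$ for almost every $\beta$, i.e.\ almost every $\beta$ lies outside $A_0$. Hence $\mathcal{L}(A_0)=0$.

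For the dimension the bound $\dim_\textsf{H}A_0\le1$ is automatic, so the task is to exhibit, for each $\eta>0$, a subset of $A_0$ of dimension at least $1-\eta$. Here I would recycle the Cantor machinery of Section~\ref{sec:lowerbound}, but \emph{drop} the long zero strings that were inserted there to push $T_\beta^{n_k}1$ toward a target. Fix $\beta_0<\beta_1$ close together, let $\beta_2$ be as in \eqref{fff6}, and take the block family $U_\ell$ of \eqref{12} with $\ell=4M+2+N$. Freely concatenating blocks, $\varepsilon^{(0)},u_1,u_2,\ldots$ with $u_j\in U_\ell$, yields self-admissible words by Lemma~\ref{ll1} and Lemma~\ref{l5}, and the marker digits $1$ in each block together with Corollary~\ref{c1} let me select, generation by generation, words of full recurrence time, whose cylinders are regular and hence of length comparable to $\beta_1^{-n}$ by Theorem~\ref{t1}. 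Every $\beta$ in the resulting Cantor set $\mathcal{F}$ has $\varepsilon^*(1,\beta)$ equal to a concatenation of $U_\ell$-blocks; since the free part of each block has the \emph{fixed} length $N$, all zero runs have length at most a constant depending only on the fixed parameters $N$ and $M$. Thus $\sup_n\ell_n(\beta)<\infty$, i.e.\ $\mathcal{F}\subset A_0$.

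Finally I would place on $\mathcal{F}$ the uniform measure of Section~\ref{sec:lowerbound} and run the ball-measure estimate verbatim. Because no long zero blocks are present, the exponent $\alpha$ plays no role (formally $\alpha=0$), and the computation behind Corollary~\ref{c2} together with the mass distribution principle (Proposition~\ref{p2}) gives
\[
\dim_\textsf{H}\mathcal{F}\ge\frac{\log\beta_2}{\log\beta_1}\cdot\frac{N}{\ell}.
\]
Letting $N\to\infty$ with $M$ fixed sends $N/\ell\to1$, and then choosing $\beta_1$ close to $\beta_0$ forces $\beta_2\to\beta_0$ so that $\log\beta_2/\log\beta_1\to1$; hence $\dim_\textsf{H}A_0\ge1-\eta$ for every $\eta>0$, and $\dim_\textsf{H}A_0=1$. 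The one genuinely delicate point is this lower bound: one must verify that insisting on bounded zero runs still leaves enough full-recurrence-time continuations for the counting in Corollary~\ref{c2} to produce the factor $N/\ell\to1$. Everything else is bookkeeping transported directly from Section~\ref{sec:lowerbound}.
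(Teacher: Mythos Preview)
Your argument is essentially correct but takes a much longer route than the paper. The paper's proof is a single sentence: it identifies $A_0$ with the set of $\beta$ for which the $\beta$-shift has the specification property, and then cites Theorem~A of Schmeling~\cite{Schme}, which states precisely that this set has Lebesgue measure zero and full Hausdorff dimension.

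Your approach, by contrast, redoes both parts with the paper's own tools. For $\mathcal{L}(A_0)=0$ you use a \emph{different} consequence of~\cite{Schme}, namely the density statement~\eqref{1.1}, combined with your dictionary $\ell_n(\beta)\ge m\iff T_\beta^n1<\beta^{-m}$. Note that this equivalence fails for simple Parry numbers (there $T_\beta^n1=0$ eventually while $\ell_n(\beta)$, defined via $\varepsilon^*$, stays bounded), so your identification $A_0=\{\beta:\liminf_n T_\beta^n1>0\}$ is off by a countable set; the measure conclusion survives. For the dimension you rebuild a Cantor set from the machinery of Section~\ref{sec:lowerbound}, which is legitimate; but your appeal to Corollary~\ref{c1} to extract full-recurrence-time words is the wrong tool, since the cylinders in your tree are not consecutive in $C_n^P$. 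What actually delivers the length bound $|I_n^P|\ge C'\beta_1^{-n}$ for every cylinder in your construction is the trick used in the length estimates of Section~\ref{sec:lowerbound}: because the next $U_\ell$-block begins with $0^M1$, both $(w,0^M,1)$ and $(w,0^M,0)$ are self-admissible, so Lemma~\ref{l3} forces the latter to have full recurrence time, and hence $|I^P_{|w|}(w)|\ge |I^P_{|w|+M+1}(w,0^M,0)|\ge C\beta_1^{-|w|-M-1}$. With that correction the counting goes through without any pruning, and your dimension bound $\frac{\log\beta_2}{\log\beta_1}\cdot\frac{N}{\ell}\to1$ is justified.

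The trade-off: the paper's proof is cleaner but imports a nontrivial external theorem; yours is more self-contained within the paper's framework and shows that the lower-bound machinery of Section~\ref{sec:lowerbound} already suffices for $\dim_\textsf{H}A_0=1$, but it is considerably longer and still leans on~\cite{Schme} (via~\eqref{1.1}) for the measure half.
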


\begin{proof}
  The set $A_0$ is nothing but the collections of $\beta$ with
  specification properties. Then this proposition is just Theorem A in
  \cite{Schme}.
\end{proof}



\begin{pro}[Size of $A_2$]\label{dimlevelbeta}
$\mathcal{L}(A_2)=0$ and $\dim_\textsf{H}(A_2)=1$.
\end{pro}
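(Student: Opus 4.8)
The plan is to reduce everything to the already-established dimension formula for $E(\{\ell_n\}_{n\ge 1},0)$ (the case $x_0=0$, due to Persson and Schmeling) by means of an elementary dictionary between long runs of zeros in the expansion of $1$ and strong approximation of the orbit point by $0$. First I would record, for every $\beta>1$, every $n\ge 1$ and every integer $\ell\ge 1$, the equivalence
\[
T^n_\beta 1<\beta^{-\ell}\iff \varepsilon^*_{n+1}(\beta)=\cdots=\varepsilon^*_{n+\ell}(\beta)=0\iff \ell_n(\beta)\ge\ell .
\]
The right-to-left implication is immediate from $T^n_\beta 1=\beta^{-\ell}T^{n+\ell}_\beta 1<\beta^{-\ell}$; for the converse one argues inductively, since $T^n_\beta 1<\beta^{-\ell}$ forces $\beta^{j}T^n_\beta 1<\beta^{j-\ell}\le 1$ and hence $\varepsilon^*_{n+j}(\beta)=\lfloor\beta^{j}T^n_\beta 1\rfloor=0$ for $1\le j\le\ell$. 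The countably many simple Parry numbers, where the finite and infinite expansions of $1$ disagree, form a set of zero Lebesgue measure and zero dimension and may be discarded throughout.

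For the lower bound on $\dim_\textsf{H}A_2$ I would fix a rational $\gamma>0$ and take the sequence $\ell_n=\lceil\gamma n\rceil$, so that $\liminf_{n}\ell_n/n=\gamma$ and Theorem~\ref{maintheorem} with $x_0=0$ gives $\dim_\textsf{H}E(\{\lceil\gamma n\rceil\}_{n\ge 1},0)=1/(1+\gamma)$. By the dictionary, $\beta\in E(\{\lceil\gamma n\rceil\}_{n\ge 1},0)$ means $\ell_n(\beta)\ge\lceil\gamma n\rceil$ for infinitely many $n$, whence $\ell(\beta)\ge\gamma>0$ and $\beta\in A_2$. Thus $E(\{\lceil\gamma n\rceil\}_{n\ge 1},0)\subset A_2$ and $\dim_\textsf{H}A_2\ge 1/(1+\gamma)$; letting $\gamma\to 0^+$ yields $\dim_\textsf{H}A_2\ge 1$, and the reverse inequality is trivial as $A_2\subset\R$.

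For the measure I would use the decomposition $A_2=\bigcup_{m\ge 1}\{\beta:\ell(\beta)\ge 1/m\}$ and show each piece is Lebesgue-null. Fixing $m$ and any $\gamma'$ with $0<\gamma'<1/m$, a short $\limsup$ computation shows that $\ell(\beta)\ge 1/m$ implies $\ell_n(\beta)/n>\gamma'$ for infinitely many $n$, so by the dictionary $\beta\in E(\{\lceil\gamma' n\rceil\}_{n\ge 1},0)$. Hence $\{\beta:\ell(\beta)\ge 1/m\}\subset E(\{\lceil\gamma' n\rceil\}_{n\ge 1},0)$, a set of Hausdorff dimension $1/(1+\gamma')<1$ and therefore of Lebesgue measure zero. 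A countable union of null sets being null, $\mathcal{L}(A_2)=0$.

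The only genuine content lies in the first step: establishing the dictionary and carrying out the $\limsup$ bookkeeping that converts the qualitative statement $\ell(\beta)\ge\gamma$ into the quantitative infinitely-often statement $\ell_n(\beta)\ge\lceil\gamma' n\rceil$ with a strictly smaller exponent $\gamma'$. Once this is in place, both assertions follow from the known dimension formula together with the trivial facts that Hausdorff dimension strictly below $1$ forces Lebesgue measure zero and that countable unions preserve both nullity and the supremum of dimensions.
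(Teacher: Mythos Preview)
Your proof is correct and follows essentially the same route as the paper: both arguments translate the condition $\ell_n(\beta)\ge\ell$ into the approximation condition $T^n_\beta 1<\beta^{-\ell}$ and then invoke the dimension formula for $E(\{\ell_n\},0)$. The paper packages this via the two-sided estimate $\beta^{-(\ell_n(\beta)+1)}\le T^n_\beta 1\le(\beta+1)\beta^{-(\ell_n(\beta)+1)}$ and proves the slightly sharper intermediate statement $\dim_\textsf{H}\{\beta:\ell(\beta)\ge\alpha\}=1/(1+\alpha)$ for every $\alpha>0$, from which both conclusions follow at once; your clean equivalence $T^n_\beta 1<\beta^{-\ell}\iff\ell_n(\beta)\ge\ell$ (for non-simple Parry $\beta$) achieves the same end, and your separate treatment of the lower bound and the measure statement is equivalent in content.
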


\begin{proof}
  For any $\alpha>0$, let
  \[
  F(\alpha) = \left\{\, \beta>1: \ell(\beta)\geq\alpha \,\right\}.
  \]
  Then $A_2=\bigcup_{\alpha>0}F(\alpha)$. Since $F(\alpha)$ is
  increasing with respect to $\alpha$, the above union can be
  expressed as a countable union. Now we show that for each $\alpha>0$
  \[
  \dim_\textsf{H}F(\alpha)=\frac{1}{1+\alpha},
  \]
  which is sufficient for the desired result.

  Recall the algorithm of $T_{\beta}$. Since for each $\beta\in A_2$,
  the $\beta$-expansion of 1 is infinite, then for each $n\ge 1$, we
  have
  \[
  T^n_{\beta}1 = \frac{\varepsilon_{n+1}^*(\beta)}{\beta} +
  \frac{\varepsilon_{n+2}^*(\beta)}{\beta^2} + \cdots.
  \]
  Then by the definition of $\ell_n(\beta)$, it follows
  \begin{equation}\label{orbit1lnbeta}
    \beta^{-(\ell_n(\beta)+1)} \le T_\beta^n1
    \le (\beta+1)\beta^{-(\ell_n(\beta)+1)}.
  \end{equation}
  As a consequence, for any $\delta>0$,
  \begin{equation}\label{8.1}
    F(\alpha)\subset \{\,\beta>1:
    T_\beta^n1<(\beta+1)\beta^{-n(\alpha-\delta)-1}\ {\text{for
        infinitely many}}\ n\in \mathbb{N}\,\}.
  \end{equation}

  On the other hand, it is clear that
  \begin{equation}\label{claimlowerdim}
    \{\,\beta>1: T_\beta^n1<\beta^{-n\alpha}\ {\text{for infinitely
        many}}\ n\in \mathbb{N}\,\}\subset F(\alpha).
  \end{equation}
  Applying Theorem \ref{maintheorem} to (\ref{8.1}) and
  \eqref{claimlowerdim}, we get that
  \[
  \dim_\textsf{H}F(\alpha)= \frac{1}{1+\alpha}. \qedhere
  \]
\end{proof}

Since $A_1=(1,\infty)\setminus (A_0\cup A_2)$, it follows directly
that
\begin{pro}[Size of $A_1$] 
%
The set $A_1$ is of full Lebesgue measure.
\end{pro}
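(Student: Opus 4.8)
The plan is to use the fact that $A_0$, $A_1$, $A_2$ form a partition of the parameter space $(1,\infty)$, together with the two preceding propositions asserting that $A_0$ and $A_2$ are Lebesgue-null. First I would confirm that the three classes are pairwise disjoint and cover $(1,\infty)$. Indeed, for any $\beta>1$ the sequence $\{\ell_n(\beta)\}$ is either bounded, in which case $\beta\in A_0$, or unbounded; in the unbounded case, according to whether $\ell(\beta)=0$ or $\ell(\beta)>0$ we place $\beta$ in $A_1$ or in $A_2$. Since boundedness of $\{\ell_n(\beta)\}$ forces $\ell(\beta)=\limsup_{n\to\infty}\ell_n(\beta)/n=0$, the set $A_0$ is disjoint from $A_2$, and the three classes overlap in no point. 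This justifies the identity $A_1=(1,\infty)\setminus(A_0\cup A_2)$ already recorded above.

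The remaining step is a one-line measure-theoretic computation. By the Proposition on the size of $A_0$ we have $\mathcal{L}(A_0)=0$, and by Proposition~\ref{dimlevelbeta} we have $\mathcal{L}(A_2)=0$. Subadditivity of Lebesgue measure then gives $\mathcal{L}(A_0\cup A_2)\le\mathcal{L}(A_0)+\mathcal{L}(A_2)=0$, so the complement of $A_1$ in $(1,\infty)$ is a null set, and $A_1$ therefore has full Lebesgue measure. There is no genuine obstacle to overcome here: all the substance is already contained in the two preceding results on $A_0$ and $A_2$, and this final statement is merely set-theoretic bookkeeping combined with the additivity of $\mathcal{L}$.
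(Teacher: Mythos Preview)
Your proposal is correct and follows exactly the paper's approach: the paper simply records the identity $A_1=(1,\infty)\setminus(A_0\cup A_2)$ and states that the result ``follows directly'' from the two preceding propositions giving $\mathcal{L}(A_0)=\mathcal{L}(A_2)=0$. Your write-up merely spells out the elementary partition and subadditivity steps that the paper leaves implicit.
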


\subsection*{Acknowledgements}
This work was in part finished when the some of the authors visited
the Morning Center of Mathematics (Beijing). The authors are grateful
for the host's warm hospitality. This work was partially supported by
NSFC Nos. 11126071, 10901066, 11225101 and 11171123.

\end{document}